\title{Brief Article}
\author{The Author}
\theoremstyle{definition} 
 \newtheorem{definition}{Definition}[section]
 \newtheorem{remark}[definition]{Remark}
\theoremstyle{plain}      
 \newtheorem{proposition}[definition]{Proposition}
 \newtheorem{theorem}[definition]{Theorem}
 \newtheorem{corollary}[definition]{Corollary}
 \newtheorem{lemma}[definition]{Lemma}
\newcommand{\weil}{Weil-Petersson }
\newcommand {\teich}{Teichm\"{u}ller }
\newcommand{\T}{{\mathcal T}}
\newcommand{\Tbar}{\overline{\mathcal T}}
\newcommand{\g}{\ensuremath{\gamma}}
\newcommand{\del}{\partial}
\newcommand{\map}{\ensuremath{{\rm Map}(\Sigma)}}
\newcommand{\e}{\ensuremath{\varepsilon}}
\newcommand{\h}{\ensuremath{{\mathbb H}^2}}
\newcommand{\tr}{\operatorname{Tr}}
\begin{document}

\title{Local and Global Aspects of \\ Weil-Petersson Geometry}

\author{Sumio Yamada\thanks{
Work partially supported by Grant-in-aid-for-scientific-research 20540201, 23654061} }

\address{
Department of Mathematics, Gakushuin University\\
Toshima, Tokyo 171-8588\\
email:\,\tt{yamada@math.gakushuin.ac.jp}
}

\maketitle




\tableofcontents   

\section{Introduction}\label{s-1}

The study of moduli spaces of Riemann surfaces  holds a central position in the history of modern mathematics, as the subject is a crossroad among several areas such as complex function theory, algebraic geometry, topology, number theory, partial differential equations and differential geometry.  In those areas, the original motivations for the investigations differ from each other, which led to a seemingly distinct set of subfields all studying the moduli space of Riemann surfaces in essence.   

In this context, it seems that the field of differential geometry holds a curious position, as the very origin of surface theory was marked by Gauss in his study of  embedded surfaces in ${\mathbb R}^3$, leading to
the Gauss curvature and  the Theorema Egregium, which had become the starting point of  modern differential geometry.  Soon afterwards, however,  the theory of algebraic curves initiated by Riemann and that of Fuchsian groups took over the classical differential geometry in the study of surfaces, which in a hindsight is a natural development considering the effectiveness of algebraic equations and the ${\rm SL}(2, {\mathbb R})$ representation theory associated with the uniformaization theory of Poincar\'{e} and Koebe.  

In 1939, when \teich investigated  \teich maps in order to relate a pair of conformal structures, he was most likely aware of the lack of differential geometric approaches in the theory of  moduli space.
Ahlfors followed \teich in
rewriting the theory of Fuchsian groups by regarding it as a deformation theory of conformal structures, 
using the theory of Beltrami differentials. Andre Weil was also instrumental in recognizing the geometric importance of Teichm\"{u}ller's work.   Bers and Ahlfors pushed 
hard over the 1950s and 60s to make Teichm\"{u}ller's  theory complete.  At the same time, the theory of algebraic 
curves was developed independently by a set of algebraic geometers including Grothendieck, Serre, and 
Mumford.   In the meantime, it is fair to say that among the differential geometers of the second half of the 20th century, the moduli space of Riemann surfaces remained an esoteric topic compared to their scientific interests.  

This chapter, especially the first half of it,  is written with a second year graduate student in mind, who has taken a year long course in Riemannian geometry, but not necessarily well-versed in complex analysis, algebraic geometry, 
or \teich theory.  The author has ventured to write down the preceding paragraphs on the very abridged and very incomprehensive history of the subject, simply to make a point that  much of so-called \teich theory can be understood from a purely Riemannian geometric veiwpoint, despite the  non-differential-geometric development of the subject over time. We owe this approach to the following list of contributions:  1) Morrey's proof of the existence of solutions to the Beltrami equation~\cite{Mo}, 2) Earle-Eells's attempt~\cite{EE} to recoginize \teich space as a submanifold  of the space of smooth metrics, 3) the theory of harmonic maps into non-positively curved manifolds by Eells-Sampson \cite{ES} allowing to track varying metrics by smooth harmonic maps, 4) the body of work by Thurston who took the hyperbolic geometry of  surfaces to its 
full power to reinterpret the Ahlfors-Bers theory, and  5) Fischer-Tromba~\cite{FT1, FT2, Tr1} who in the 70's and 80's rewrote \teich theory, in particular the \weil geometry  from the deformation theory of hyperbolic metrics, partly initiated by the theory of traceless-transverse (TT) tensors  by Fischer and Marsden \cite{FM} originally developed with applications to general relativity in mind.      

In the  post-Ahlfors \weil geometry, Wolpert's contributions stand out for his singular pursuit wanting to understand the geometry of \teich spaces.  The chapter~\cite{Wohb} written by Wolpert that has appeared in this Handbook series covers most of the ground in the development of \weil geometry in the last few decades, particularly the last one, and the present article is meant to complement Wolpert's. One should also consult the book of Wolpert's~\cite{Wobook} 
which offers a more comprehensive exposition.  The reader will notice that many of  the results stated in his chapter and the present  chapter overlap, and the author uses several statements of Wolpert's at crucial steps in developing the theory.  However, also apparent should be that the languages used in describing the geometry and the techniques employed in the proofs of the theorems are distinct from each other in the two chapters. 

We now state several objectives of this exposition.
The first is  to write down the basics of the Riemannian approach to the \weil geometry, complementing Wolpert's expositions on the subject where much of the argument is made with Beltrami coefficients. 
This amounts to the {\it local} aspect of \weil geometry of the title of the chapter. In doing so, we add some new material to the existing literature such as \cite{FT1, FT2, Jo, Tr1}. Much of the exposition in this part of the chapter can be applied to settings of higher-dimensional moduli spaces, such as that of Calabi-Yau manifolds (for example \cite{CO}.)  Therefore insisting on the Riemannian geometric approach is meaningful in the sense that
the theories of Beltrami equations and complex analysis are specific to  dimension two, and most of it is not transferable to higher dimensional situations.  Some of the results in this exposition have been taken from the article \cite{Y1} which appeared in the Journal of Differential Geometry in 1999.  Unfortunately  while in press numerous typographical mistakes were introduced in that article. All the results and  proofs in the paper are valid, but over the years this situation has posed unnecessary challenges to the interested readers.   We have tried to rectify the situation, and present here a comprehensive version of the content of the paper \cite{Y1},  including a proof of the \weil convexity of energy of harmonic maps.  Incidentally there has been some dispute (see for example \cite{DW2}) whether the statement was first proved by Tromba \cite{Tr2} in 1996, where the domain of the harmonic maps is the surface itself.  We point out  in Section 3.6 of the present chapter, however reluctantly,  a difference between the two proofs explicitly, in particular the difference in the \weil geodesic equation, to let the matter rest. 

In utilizing harmonic maps for the \teich theory  we have included a section on the \weil geometry of the
\teich space of the torus, as  this has not been explicitly written down in the literature although it is  well-known to the experts.  Each conformal structure on it  is uniformized by a flat metric, which can be 
identified with a harmonic map from a fixed reference torus. Then we show here that the resulting \weil 
geometry is isometric to the homogeneous space ${\rm SL}(2, {\mathbb R})/{\rm SO}(2)$ with its left 
invariant metric, or equivalently the  Poincar\'e disc.  As the harmonic map is an affine map, it is  the \teich 
map as well (see for example  \cite{BPT} for  \teich geometry).   

The second goal of this exposition is to present a series of  developments the \weil geometry has gone through over the last decade, after a paper by the author~\cite{Y5} (later as \cite{Y2})  appeared where the CAT(0) geometry was introduced for augmented \teich space for the first time.  This part of the chapter corresponds to the {\it global} aspect in the title.  We take steps in discussing 1) \teich space, which is incomplete \weil metrically and \weil geodesically,   2) the \weil metric completion of  \teich space, which is identified with the augmented \teich space,  and 3) the \weil geodesic completion of  \teich space, which is realized as a Coxeter complex where the set which works as the simplex is the \weil metric completion. The successive enlargement of the original \teich space as traced through the author's work \cite{Y1,Y2, Y3, Y4} is motivated from the  point of view that there should be a  genus-varying  \teich theory, a direction already actively pursued in the Teichm\"{u}ller-Grothendieck theory from the algebro-geometric approach.  With this in mind, we demonstrate that the Coxeter complex can be embedded in the universal \teich space.

The last objective of this chapter is to write down the \weil geometry relevant to the universal \teich space. The orthodox deformation theory in the universal \teich space is written in terms of Beltrami differentials (see for example O. Lehto's book~\cite{Le}).  The Nag-Verjovsky's paper \cite{NV} has shed much light in  clarifying how the \weil metric can be considered as the $L^2$-pairing of  the linearized Beltrami differentials induced by a smooth vector fields on the unit circle.   Takhtajan-Teo \cite{TT} then took over the idea and further developed it so that they succeeded in generalizing a collection of results by Wolpert
 \cite{W4} concerning the second derivatives of the \weil metric tensor,
in particular the \weil curvature by regarding the universal \teich space as a Hilbert manifold.  We look at the Nag-Vejovsky paper \cite{NV}  again, and describe the tangent space of the universal \teich space at the identity 
as  the set of traceless transverse tensors constituting a component in an $L^2$-decomposition theorem of Hodge type.   Additionally  we demonstrate the \weil metric tensor as a Hessian of the $\overline{\del}$-energy of harmonic maps.  Those results have their counterparts in the compact surface cases, but were never  investigated in the universal context before \cite{Y1}.   The relation between a negatively curved complete manifold and its geometric boundary has been an active area of investigation in the last decade, partly due to the excitement from so-called AdS-CFT correspondence, which in turn has triggered much incentive to study the conformally compact/Einstein-Poincar\'{e} manifolds. As the Poincar\'{e} disc is the simplest example of conformally compact manifolds as well as Einstein-Poincar\'{e} manifolds, we believe that  the universal \teich space offers a prototype of the moduli space of such manifolds.  In doing so, it will become necessary to formulate the theory without the use of one-variable complex analysis, and the deformation theory of Riemannian metrics as explored in this chapter will be a basic model for higher dimensional analogues.   
 
The author thanks Athanase Papadopoulos for his encouragement to write down this exposition.

\section{Classical and Universal \teich Spaces}

\subsection{Classical \teich spaces\index{\teich space!classical} for closed surfaces}
Let $\Sigma$ be a compact surface without boundary of genus $g \geq 1$ (when
$g =0$ the situation is very simple.) 
By the existence theorem of an isothermal coordinate system by Korn and Lichtenstein, any
Riemannian metric $g$ can be identified with a Riemann surface, namely a Riemannian surface is a Riemann surface.  The universal covering space of the surface is either the whole plane or the upper half space, and thus the surface can be uniquely equipped with a Euclidean metric when $g=1$ or a hyperbolic metric when $g >1$.  This statement is the so-called Uniformization Theorem.  Hence we can think of the space ${\cal M}_{K}, \,\,\, (K \equiv 0,-1)$ 
of constant curvature metrics as a subset of the space of smooth metrics $\cal M$ 
on $\Sigma$, the latter space being fibered by the elements of ${\cal M}_{K}$ so that each fiber consist of  the 
metrics conformal to a constant curvature/uniformized metric $G \in {\cal M}$.    

The \teich space\index{\teich space} is then defined as the quotient space
\[
{\T}_g = {\cal M}_{K} / {\rm Diff}_0 \Sigma
\]
where the equivalence relation is given as 
\[
G_1 \sim G_2 \Leftrightarrow G_2 = \varphi^* G_1
\]
for some $\varphi$ in ${\rm Diff}_0 \Sigma$. Here ${\rm Diff}_0 \Sigma$ is
the identity component of the  orientation-preserving diffeomorphism group 
${\rm Diff} \Sigma$. Recall that the map $\varphi: (\Sigma, \varphi^* G_2) \rightarrow (\Sigma, G_2)$ is an isometry.  Note that in defining the identity element of ${\rm 
Diff}_0 \Sigma$ one requires a reference Riemann surface $(\Sigma_0, G_0)$ such 
that it acts as the domain of ${\rm Id}:\Sigma_0 \rightarrow \Sigma$. Namely 
$(\Sigma_0, G_0)$ gives homotopy markings on the target surface.  
 
By an important  theorem of Earle-Eells \cite{EE}, it is known that the identity component ${\rm Diff}_0 \Sigma \subset {\rm Diff} \Sigma$ consists of diffeomorphisms homotopic to the identity map. 

The moduli space\index{moduli space} $\mathfrak{M}_g$ is defined as 
\[
\mathfrak{M}_g =  {\cal M}_{K} / {\rm Diff} \Sigma
\]
where the equivalence relation is given as 
\[
G_1 \sim G_2 \Leftrightarrow G_2 = \varphi^* G_1
\]
for some $\varphi$ in ${\rm Diff} \Sigma$. Thus the \teich space projects down to the 
moduli space with the fibers identified with the discrete infinite group ${\rm Diff} 
\Sigma/{\rm Diff}_0 \Sigma$, called mapping class group\index{mapping class group}, or \teich modular 
group\index{\teich modular group}.  We denote this group by $\map$. We define now for a later use the full 
diffeomorphism group $\widehat{\rm Diff} \Sigma$ which, in addition to the elements 
of ${\rm Diff} \Sigma$, also contains the orientation-reversing diffeomorphisms 
of $\Sigma$.  Then the quotient group $\widehat{\rm Diff} \Sigma /{\rm Diff}_0 \Sigma$ is called the extended mapping class group $\widehat{\map}$. 

\subsection{The Universal \teich space\index{universal \teich space}\index{\teich space!universal}} 
The Uniformization Theorem, by Poincar\'{e} and Koebe (see \cite{Le})) says that given a closed surface, all the smooth 
metrics on it can be uniquely {\it uniformized} by a constant curvature metric.  When the surface is of genus 
greater than one, the constant curvature metrics are hyperbolic metrics, and the hyperbolic surface 
can then be written as ${\mathbb H}^2 /\Gamma$ for some Fuchsian group $\Gamma \subset {\rm SL}(2, {\mathbb R})$. Namely, given a Riemannian surface 
$(\Sigma, G)$,  the metric $G$ is conformal to a hyperbolic metric $G_0$ on $\Sigma$. 
The space of Fuchsian groups appearing in the Uniformization Theorem can be regarded as  
\[
{\rm QC}(\Gamma_0) / {\rm SL}(2, {\mathbb R}) = \{ w \in {\rm QC}({\bf D}): w \Gamma_0 w^{-1} \mbox{ is an element of ${\rm SL} (2, {\mathbb R})$} \}
\]
for some fixed reference Fuchsian group $\Gamma_0$, where 
the set is identified with the set of all  pulled-back metrics on the unit disc ${\bf D} := \{ z \in {\mathbb C} : |z| < 1 \}$,  obtained by 
pulling back the Poincar\'e disc metric 
\[
G_0 = \frac{4}{(1-|z|^2)^2} |dz|^2 
\] 
by $\Gamma_0$-equivariant quasi-conformal self-maps of  ${\bf D}$  up to an equivalence
via the M\"{o}bius transforms of the disc.  Note that the original hyperbolic surface $(\Sigma, G)$ in this context appears as $ {\rm Id}_{\bf D}  \in {\rm QC}({\bf D})$. 

Note that this set is the set of all the hyperbolic metrics 
on the surface $\Sigma$;
\[
{\cal M}_{-1} = {\rm QC}(\Gamma_0) / {\rm SL}(2, {\mathbb R}). 
\] 
On the other hand, recall that two 
metrics are deemed geometrically equivalent  if one can be obtained from the 
other by a diffeomorphism on the manifold. Here the manifold is a hyperbolic surface 
realized as a quotient manifold of the open disc.  As the hyperbolic metric $G_0$ of the Poincar\'{e} 
disc model blows up near the geometric boundary $S^1 = \partial {\bf D} = \{z: |z| =1\}$, 
the diffeomorphisms of the surface regarded  (by going to its universal covering space)  as diffeomorphisms of the open disc are the elements of the 
set ${\rm QC}_0 (\Gamma_0)$ of quasi-conformal self-maps of the disc which 
extend to the identity map of the geometric boundary $S^1 = \partial {\bf D}$.  This can be understood from the 
picture that those diffeormorphisms can be lifted to the universal covering, which are periodic on the tessellation by a fundamental region of $\Sigma$,
leaving the tessellation invariant, and that on the  Poincar\'{e} disc the tessellation pattern gets increasingly 
dense as one approaches the geometric boundary $\del {\bf D}$.  
   
Hence the \teich 
space in this context is  regarded as
\begin{eqnarray*}
{\cal T} &=&  {\cal M}_{-1} / {\rm Diff}_0 \Sigma \\
&=& \Big[ {\rm QC}(\Gamma_0) / {\rm SL}(2, {\mathbb R}) \Big] /
{\rm QC}_0(\Gamma_0) \\
&=&  
{\rm QS}(\Gamma_0)/ {\rm SL} (2, {\mathbb R})
\end{eqnarray*}
where ${\rm QS}(\Gamma_0) := {\rm QC}(\Gamma_0) / {\rm QC}_0(\Gamma_0) $ is the set of $\Gamma_0$-equivariant quasi-symmetric self-maps of 
$S^1 = \partial {\bf D}$, a subset of the space of quasi-symmetric self-maps of the circle ${\rm QS}(S^1)$.  The quotient space
is defined 

under the following equivalence relation :  a pair of elements $[\phi_1]$ and $[\phi_2]$ in ${\rm QC}(\Gamma_0)/{\rm SL}(2, {\mathbb R})$ are  
equivalent if they are related by $\phi_2 = \phi_1 \circ \psi$ for some $\psi$ in $ {\rm QC}_0(\Gamma_0)$.  
Note that we have used the fact that ${\rm SL}(2, {\mathbb R}) \cap {\rm QC}_0 (\Gamma_0) = \{ {\rm Id} \}$.
Here we think of points in $\T$
as left cosets of the form $ {\rm SL} (2, {\mathbb R}) \circ w = [w]$ where
each $w$ is a  quasi-symmetric homeomorphism of the circle.

We replace the closed surface $\Sigma$ above by the hyperbolic plane ${\mathbb H}^2$.  This 
can be regarded as replacing the Fuchsian group $\Gamma \subset {\rm SL}(2, {\mathbb R})$
by the trivial group $\Gamma_0 = {\rm Id}  \in {\rm SL}(2, {\mathbb R})$.  Namely in the 
 above construction, such a replacement results in a new space
\begin{eqnarray*}
{\cal UT}  &=& \Big[ {\rm QC}(\{ {\rm Id }_{\bf D} \}) / {\rm SL}(2, {\mathbb R}) \Big] /
{\rm QC}_0( \{ {\rm Id}_{\bf D} \}) \\ &=& \Big[ {\rm QC}({\bf D}) / {\rm SL}(2, {\mathbb R}) \Big] /
{\rm QC}_0( {\bf D}) \\
&=&  
{\rm QS}(\{ {\rm Id}_{\bf D} \})/ {\rm SL} (2, {\mathbb R}) = {\rm QS}(S^1) / {\rm SL} (2, {\mathbb R})
\end{eqnarray*}
Note that we have used the fact that ${\rm SL}(2, {\mathbb R}) \cap {\rm QC}_0 ({\bf D}) = \{ {\rm Id}_{\bf D} \}$.
As the resulting space contains all \teich spaces
of surfaces of the form ${\mathbb H}^2 / \Gamma$, we call the space ${\cal UT} $the universal \teich space.  For a comprehensive treatment of the subject, including the definitions of quasi-conformal and quasi-symmetric maps, we refer the reader to Lehto's book \cite{Le}.

The advantage in introducing the quasi-comformal maps of the  disc and the quasi-symmetric maps 
of the unit circle is that one can then describe the deformations of hyperbolic metrics by the pull-back
action of quasi-conformal maps $w: {\bf D} \rightarrow {\bf D}$ each of which is a solution to the Beltrami equation\index{Beltrami equation}
\[
w_{\overline{z}} = \mu (z) w_z .
\]  
for some  Beltrami coefficient $\mu$,  a ${\mathbb C}$-valued measurable function on $D$ with $|\mu| < 1$, 
an element of the unit ball $L^{\infty} ({\bf D})_1$ in the complex Banach space $L^{\infty} ({\bf D})$.
 
In other words, we can identify each element $w$ of ${\rm QC}({\bf D})$ with $\mu$ uniquely, provided the solution $w:{\bf D} \rightarrow {\bf D}$ fixes three points on the boundary.  
The existence and  uniqueness of the solution to the Beltrami equation is due to Morrey \cite{Mo}. The reader is referred to the books by Nag \cite{Na} and  Lehto \cite{Le}  for an exposition on the subject. 
We denote the solution to the Beltrami equation above by $w_\mu$.  

In particular, define the set of equivariant Beltrami differentials by
\[
L^{\infty} (\Gamma_0) = \{\mu \in L^{\infty} ( {\bf D}) \,\,\, | \,\,\, \mu(gz) \frac{\overline{g'(z)}}{ g'(z)} = \mu (z) 
\mbox{ a.e. on ${\bf D}$ for all $g$ in $\Gamma_0$} \}
\] 
which is a closed subspace in $L^\infty ({\bf D})$. The unit ball $L^{\infty} (\Gamma_0) \cap L^{\infty}_1$ is denoted by 
$L^\infty(\Gamma_0)_1$.

The \teich space for a Fuchsian group $\Gamma_0$, including the case of  the trivial group $\{ {\rm Id} \}$, is identified with
\[
\T (\Gamma_0) = L^\infty (\Gamma_0)_1/ \sim
\]  
where $\mu \sim \nu$ if and only if $w_\mu = w_\nu$ on $\partial {\bf D} = S^1$.  
The identification between the space of the Beltrami coefficients and the Fuchsian group 
is given by the following: If $\mu \in L^\infty (\Gamma_0)_1$ then $w_\mu$ conjugates $\Gamma_0$ 
to another Fuchsian group 
\[
\Gamma_\mu = w_\mu \Gamma_0 w_\mu^{-1}.
\]
Hence the equivalence class $[\mu]$ represents the hyperbolic surface ${\mathbb H}^2 / \Gamma_\mu$.

Under the light of this identification, as presented in a paper by Nag-Verjovsky \cite{NV}, the moduli 
of hyperbolic surfaces are determined by the quasi-symmetric homeomorphisms of the unit circle; a 
model mentioned but often insufficiently explained in the literature
of the string theory. We remark that this viewpoint was established by Beurling and Ahlfors in the 50s
 \cite{AB}.

In the linear theory to 
be developed in the following section, we will return to the viewpoint of the Beltrami equation.

\section{Riemannian Structures of $L^2$-pairing}

\subsection{$L^2$-pairing and its Levi-Civita connection}
\subsubsection{$L^2$-pairing of deformation tensors}
The tangent space $T_G {\cal M}$ of the space ${\cal M}$ at a metric $G$ is the 
space of smooth symmetric $(0,2)$-tensors on $\Sigma$. This linear space has a natural $L^2$-pairing\index{$L^2$-pairing} defined as follows.
\[
\langle h_1, h_2 \rangle_{L^2(G)}= \int_\Sigma \langle h_1(x), h_2(x) \rangle_{G(x)} d \mu_G (x)
\]
where the $h_i$'s are symmetric $(0,2)$-tensors indicating the directions of 
deformation of $G$ along the path $G + \varepsilon h_i + o(\varepsilon)$.
The integrand can be rewritten, using a local coordinate chart,  as
\begin{eqnarray*}
\langle h_1(x), h_2(x) \rangle_{G(x)} & = & \sum_{1 \leq i,j,k,l \leq 2} G^{ij}G^{kl}(h_1)_{ik}
(h_2)_{jl} \\
 & = & {\tr} \Big( (G^{-1} \cdot h_1)\cdot (G^{\-1} \cdot h_2) \Big)   
\end{eqnarray*}
where $A \cdot B$ denotes matrix multiplication and $\tr A$ is the trace of the matrix $A$.
This quantity is well defined, meaning it is invariant under  change of coordinate charts.
In particular it can be simplified by choosing a geodesic normal coordinate system where
$G_{ij}(p) = \delta_{ij}$ at its center $p$ as
\[
\langle h_1(p), h_2(p) \rangle_{G(p)} = \sum_{j,k} (h_1)^j_k(p)(h_2)^k_j (p) \,\,\, (= \tr (h_1 \cdot h_2))
\]   
the trace of the product of $2 \times 2$ matrices. From now on, 
we will use the Einstein notation of indices, omitting the summation symbols.

\subsubsection{Levi-Civita connection of the $L^2$-pairing}
The space of smooth metrics ${\cal M}$ defined on a manifold $N$ has always the $L^2$-pairing 
defined above.  Formally one can regard the $L^2$-pairing as a Riemannian metric on ${\cal M}$ and 
write down the Levi-Civita connection\index{Levi-Civita connection for the $L^2$-pairing} for it.  

We fix a coordinate chart around a point $p$ in $\Sigma$.  Let $h_1$, $h_2$ and $h_3$ be locally constant 
symmetric $(0, 2)$-tensors defined over each chart.  Then the brackets among the $h_i$'s vanish, 
namely
\[
[h_i, h_j] = 0
\]
where the bracket here is the Lie derivative of the tensor $h_j$ in the direction of $h_i$, where each deformation tensor is regarded as  a vector field on $\cal M$ evaluated at $G$.  Note that in what follows,  as all the quantities appearing below are tensorial, it suffices to consider point-wise calculations, namely
we may restrict to the locally constant tensors. 

The formula, which appears in the existence and uniqueness theorem of
Levi-Civita connection in any standard differential geometry textbook,  is 
\begin{align*}
\langle D_{h_1} h_2, h_3 \rangle & = &  \frac{1}{2} \Big( h_1 \langle h_2, h_3 \rangle + h_2 \langle h_1, h_2 
\rangle  - h_3 \langle h_1, h_2 \rangle \\
 &  & \,\,\, + \langle [h_1, h_2], h_3 \rangle - \langle [h_1, h_3], h_2 \rangle - \langle [h_2, h_3], h_1 \rangle \Big)
\\
 & = &  \frac12 \Big( h_1 \langle h_2, h_3 \rangle + h_2 \langle h_1, h_2 \rangle - h_3 \langle h_1, h_2 \rangle 
\Big)  
\end{align*}  
On the other hand, the pairing is a function of $G$, and the $h_i$'s are deformation tensors of $G$.  We write
down the derivatives $h_i \langle h_j, h_k \rangle$ as 
\begin{align*}
h_k \langle h_i, h_j \rangle_{L^2 (G)} &=& \frac{d}{dt} \int_\Sigma {\tr} \Big( G_t^{-1} \cdot h_i\cdot G_t^{-1} \cdot h_j \Big)  d \mu_{G_t(x)}  \Big|_{t=0} \\
 & = & \int_\Sigma {\tr} \Big(  G^{-1} \cdot (-h_k) \cdot G^{-1} \cdot h_i\cdot G^{-1} \cdot h_j \Big)  d \mu_{G(x)} \\
 & & \,\,\,  +\int_\Sigma {\tr} \Big(  G^{-1} \cdot h_i \cdot G^{-1} \cdot (-h_k) \cdot G^{-1} \cdot h_j \Big)  d \mu_{G(x)} \\ 
 & & \,\,\, \,\,\, + \int_\Sigma {\tr} \Big(  G^{-1} \cdot h_i \cdot G^{-1} \cdot h_j \Big) \frac12 ({\rm tr}_G h_k) d \mu_{G(x)} \\
 & =  & - \langle h_1 \cdot G^{-1}  \cdot  h_j, h_k \rangle_{L^2 (G)}  - \langle h_j  \cdot G^{-1}  \cdot  h_i, h_k \rangle_{L^2 (G)}   \\
 & & \,\,\, +  \frac14 \langle ({\rm tr}_G h_k) h_i, h_j \rangle_{L^2 (G)}  + \frac14 \langle  ({\rm tr}_G h_k) h_j, h_i  \rangle_{L^2 (G)}
\end{align*}
where $G_t = G + t h_k$.
When the $h_i$'s are trace-free,    the commutativity  $\tr (A \cdot B) = \tr (B \cdot A)$ is applied to the above integrands, and  $h_k \langle h_i, h_j \rangle_{L^2(G)}$ is invariant under  permutations of $i$,$j$ and $k$;
\[
h_k \langle h_i, h_j \rangle_{L^2 (G)} = h_{\sigma(k)} \langle h_{\sigma(i)}, h_{\sigma(j)} \rangle_{L^2 (G)} 
\] 
for any element $\sigma$ of the symmetric group ${\mathfrak S}_3$. 
By substituting the above expression  into the  formula for the connection $D$, we obtain the following 
relatively simple expression:

\begin{lemma}[Levi-Civita connection for $L^2$-pairing]
For trace-free symmetric $(0,2)$ tensors $h_i$ and $h_j$, the Levi-Civita connection for the $L^2$-pairing  at the tangent space $T_G {\cal M}$ of  the space ${\cal M}$ of smooth metrics is written as
\[
D_{h_1} h_2 = -\frac12 \Big[ h_1 \cdot  G^{-1} \cdot h_2  +  h_2 \cdot G^{-1} \cdot h_1 \Big] + \frac14 \Big[ (\tr_G h_1) h_2 +  (\tr_G h_2) h_1 -  \langle h_1, h_2 \rangle_{G(x)} G \Big].
\]
\end{lemma}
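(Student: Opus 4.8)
\medskip
\noindent\emph{Proof idea.} The plan is to feed the Koszul formula with the two first-variation identities recorded above and then simplify. Since every quantity involved is tensorial, I would first pass to locally constant deformation tensors, for which the Lie brackets $[h_i,h_j]$ vanish, so that the Koszul formula reduces to
\[
2\,\langle D_{h_1}h_2,\ h_3\rangle_{L^2(G)} \;=\; h_1\langle h_2,h_3\rangle + h_2\langle h_1,h_3\rangle - h_3\langle h_1,h_2\rangle .
\]
Into each of the three terms on the right I would insert the expression for $h_k\langle h_i,h_j\rangle_{L^2(G)}$ computed in the preceding paragraph, which (after combining the two $\tfrac14$-terms there) reads
\[
h_k\langle h_i,h_j\rangle \;=\; -\big\langle\, h_i G^{-1}h_j + h_j G^{-1}h_i,\ h_k\,\big\rangle_{L^2(G)} \;+\; \tfrac12\int_\Sigma (\tr_G h_k)\,\langle h_i,h_j\rangle_G\, d\mu_G ,
\]
coming from $\partial_t(G_t^{-1})\big|_{t=0} = -G^{-1}h_kG^{-1}$ and $\partial_t(d\mu_{G_t})\big|_{t=0} = \tfrac12(\tr_G h_k)\,d\mu_G$ with $G_t = G+t\,h_k$.

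The heart of the argument is the observation that the trilinear quantity $T(a,b,c) := \big\langle\, h_a G^{-1}h_b + h_b G^{-1}h_a,\ h_c\,\big\rangle_{L^2(G)}$ is symmetric under \emph{every} permutation of $a,b,c$: once the $L^2$-pairing is unwound, $T(a,b,c)$ is the integral over $\Sigma$ of $\tr(G^{-1}h_aG^{-1}h_bG^{-1}h_c) + \tr(G^{-1}h_bG^{-1}h_aG^{-1}h_c)$, and cyclic invariance of the trace of a product of six matrices makes this invariant under all relabellings of $a,b,c$; this is precisely the $\mathfrak S_3$-symmetry of $h_k\langle h_i,h_j\rangle$ noted above, now separated off from the volume-form piece. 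Granting it, the three ``quartic'' contributions coming from the Koszul terms, $-T(2,3,1)$, $-T(1,3,2)$ and $+T(1,2,3)$, collapse to the single term $-T(1,2,3) = -\big\langle h_1 G^{-1}h_2 + h_2 G^{-1}h_1,\ h_3\big\rangle_{L^2(G)}$, which yields the $-\tfrac12[\,h_1G^{-1}h_2+h_2G^{-1}h_1\,]$ part of the formula.

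It then remains to gather the three volume-form terms $\tfrac12\int(\tr_G h_k)\langle h_i,h_j\rangle_G\,d\mu_G$, and here I would keep the test direction $h_3$ arbitrary. Because $h_1$ and $h_2$ are trace-free, the two pieces carrying $\tr_G h_1$ and $\tr_G h_2$ drop out; but the term $-h_3\langle h_1,h_2\rangle$ still contributes $-\tfrac12\int(\tr_G h_3)\langle h_1,h_2\rangle_G\,d\mu_G$, and rewriting $\tr_G h_3 = \langle G,h_3\rangle_G$ turns this into $-\tfrac12\big\langle\, \langle h_1,h_2\rangle_G\,G,\ h_3\,\big\rangle_{L^2(G)}$ --- the source of the $-\tfrac14\langle h_1,h_2\rangle_G\,G$ term in the statement. (Without the trace-free hypothesis the other two pieces produce, in the same fashion, $+\tfrac12\big\langle (\tr_G h_1)h_2 + (\tr_G h_2)h_1,\ h_3\big\rangle_{L^2(G)}$, which is why the full formula is displayed.) Collecting everything gives $2\langle D_{h_1}h_2, h_3\rangle_{L^2(G)} = \langle X, h_3\rangle_{L^2(G)}$ for all symmetric $(0,2)$-tensors $h_3$, where $X$ is twice the right-hand side of the lemma; nondegeneracy --- indeed positive-definiteness --- of the $L^2$-pairing on $T_G{\cal M}$ then forces $D_{h_1}h_2 = \tfrac12 X$.

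I expect the one delicate point to be the bookkeeping: every contribution must be rewritten as an $L^2$-pairing \emph{against $h_3$} before $D_{h_1}h_2$ can be read off, and one must resist applying the $\mathfrak S_3$-symmetry to $h_k\langle h_i,h_j\rangle$ as a whole --- that symmetry needs all three arguments trace-free, whereas $h_3$ runs over all of $T_G{\cal M}$. It is exactly the variation of the volume element in the $h_3$-slot that generates the easily-overlooked $-\tfrac14\langle h_1,h_2\rangle_G\,G$ correction.
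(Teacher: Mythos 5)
Your proposal is correct and follows essentially the same route as the paper: the Koszul formula for locally constant (hence commuting) deformation tensors, the first variations $\partial_t G_t^{-1} = -G^{-1}h_kG^{-1}$ and $\partial_t\, d\mu_{G_t} = \tfrac12(\tr_G h_k)\,d\mu_G$, and the cyclic-trace symmetry that collapses the three cubic terms to a single one. Your added care in keeping the test direction $h_3$ arbitrary (rather than trace-free), so that the volume-form variation in the $h_3$-slot is seen to produce the $-\tfrac14\langle h_1,h_2\rangle_G\,G$ term, is exactly the right bookkeeping and makes explicit a point the paper passes over quickly.
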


Note that the expression for the connection is symmetric in $1$ and $2$.  The author thanks Akira Yoshizato for pointing out an error in the calculation that appeared in the previous version.

\subsection{Tangential conditions and the \weil metric} 
When $G$ is a uniformizing metric of its conformal class, then the tangent 
space $T_G {\cal M}$ decomposes into the deformation of $G$ preserving the 
constant curvature condition, and its complement. This can be formally stated 
as follows.

In dimension two, the Riemann curvature tensor is completely determined by one 
scalar function, the sectional curvature $K$. Then the Ricci curvature tensor is of the form 
\[
R_{ij} = K G_{ij}
\]
namely $G$ is an Einstein metric.  The well-known variational formula (see \cite{Be}) of the 
Ricci tensor under a deformation $G + \varepsilon h$ at $\varepsilon = 0$ 
gives, after taking its trace:
\[
G^{ij} \dot{R}_{ij} = - \triangle_G {\rm Tr}_G h + \delta_G \delta_G h.
\]
Hence we have the following variational formula for the sectional curvature under the deformation of 
$G$ in the direction of $h$:
\begin{eqnarray*}
\dot{K} &=& G^{ij} \dot{R}_{ij} + \dot{G}^{ij} R_{ij} \\
&=&  G^{ij} \dot{R}_{ij} -h^{ij} K G_{ij} \\
&=&   - \triangle \tr_G h + \delta_G \delta_G h - K \tr_G h.
\end{eqnarray*}
We denote the quantity 
$
-(\triangle_G + K) \tr_G h + \delta_G \delta_G h
$
by ${\cal L}_G h$, where the differential operator ${\cal L}_G$ is sometimes called 
Lichnerowicz operator\index{Lichnerowicz operator}.
Hence if the deformation tensor $h$ is tangential to ${\cal M}_K$, then $h$ satisfies the following linear equation, which is the curvature-preserving condition      
\[
{\cal L}_G h= 0. 
\]  

Having characterized the tangential condition to ${\cal M}_K$, we 
additionally require the deformation tensor $h$ to be 
$L^2$-perpendicular to 
the diffeomorphism group ${\rm Diff}_0 \Sigma$ action. Consider a one-parameter family of 
diffeomorphisms $\varphi_t: \Sigma \rightarrow \Sigma$ with $\varphi_0 = {\rm 
Id}|_{\Sigma}$ and let  $\frac{d}{dt} \varphi_t |_{t=0}= X$ be a vector field on 
$\Sigma$. Recall that the Lie derivative $L_X G$ of the tensor $G$ in the direction $X$ is defined by
\[
L_X G = \frac{d}{dt} \varphi_t^* G \Big|_{t =0}. 
\]
Take a chart which gives a geodesic normal coordinate centered at $p$.  Then 
\[
L_X G (p)= X_{i;j} + X_{j;i}
\]
as $G_{ij} = \delta_{ij}$ and $G_{ij;k} = 0$ at $p$.  The condition that a symmetric $(0,2)$-tensor $h$ is $L^2$-perpendicular to the  diffeomorphism group ${\rm Diff}_0 \Sigma$ action is described as
\begin{align*}
0 = \langle h, L_X G \rangle_{L^2(G)}
\end{align*}
for all $X \in {\mathfrak X}(\Sigma)$. The right hand side can be rewritten, with respect to a geodesic normal coordinate,  as
\begin{eqnarray*}
\langle h, L_X G \rangle_{L^2(G)} & = & 
\int_{\Sigma} \langle h(x), L_X G(x) \rangle_{G(x)} d \mu_G (x) \\
 & = & \int_\Sigma h_{ij}(X_{i;j} + X_{j;i}) \,\, d \mu_G(x) \\
 & = & 2 \int_\Sigma h_{ij} X_{i;j} \,\, d \mu_G (x) \\
 & = & - 2 \int_\Sigma h_{ij;j} X_i \,\, d \mu_G (x) \\
 & = & - 2 \langle \delta_G h, X \rangle_{L^2(G)},
\end{eqnarray*}
where integration by parts has been used.  There is no boundary contribution 
as the surface $\Sigma$ is closed.  Therefore, for the tensor $h$ to be 
$L^2$-perpendicular to the diffeomorphism group ${\rm Diff}_0 \Sigma$ action, $h$ 
is required to be  {\it divergence-free}; $\delta_G h =0$. Note that $\delta_G h$ is here regarded as a tensor of $(1,0)$-type, that is, a vector field. In the normal coordinate 
system, the divergence-free condition is the same as $(\delta_G h)_i = h_{ij;j} = 0$. 
  
Now let $h$ be a deformation tensor tangential to ${\cal M}_{-1}$ at a 
hyperbolic metric $G$. Then $h$ satisfies the Lichnerowicz equation ${\cal L}_G h =0$;
\[
-(\triangle_G + K) \tr_G h + \delta_G \delta_G h = 0
\]
In addition, we require $h$ to be perpendicular to the diffeormorphism action,
which implies $\delta_G h = 0$, which in turn says that $h$ satisfies
$-(\triangle_G + K) \tr_G h =0$. When $K=0,-1$ which are the cases we are interested in,  the linear partial differential equation 
\[
-(\triangle_G + K) \tr_G h =0
\]
has only the trivial solution on the closed surface, forcing an additional condition $\tr_G h =0$. 

Therefore, we have so far characterized the conditions that a tangential vector
to the \teich space $\T_g = {\cal M}_K/{\rm Diff}_0 \Sigma$ needs to satisfy; namely the {\it trace-free} condition\index{trace-free tensors}
\[
\tr_G h = 0  
\]
which is $h_{ii} =0 \mbox{ in a normal coordinate system}$, and  
the {\it divergence-free condition}\index{divergence-free tensors}, also called the {\it transverse} condition\index{transverse tensors}
\[
\delta_G h = 0.
\]    
The so-called TT-tensors (for trace-free transverse)  appear in the study of minimal surfaces where they are the second fundamental forms of minimally embedded surfaces (see \cite{Os} for details), as well as in the study of  the Einstein equation where the tensors are a part of Cauchy initial values for the evolution problem associated to the so-called Einstein constraint equations (see \cite{FM} for references).   

We can now define the \weil metric on \teich space.

\begin{definition}[\weil metric \cite{FT1}]
The $L^2$-pairing of $T_G {\cal M}$ restricted to the trace-free, divergence-free tensors is called \weil metric on the \teich space $\T = {\cal M}_{K}/{\rm Diff}_0 \Sigma$. 
\end{definition}

As a $2\times 2$ matrix, the tangential tensor $h \in T_G{\T}$ can be expressed as 
$$ \left(
  \begin{array}{ c c }
     h_{11} & h_{12} \\
     h_{12} & -h_{11}
  \end{array} \right)
$$
with respect to a geodesic normal coordinate system centered at a point $p$ in $\Sigma$.  The integrand of the \weil pairing evaluated at $P$ becomes $2(h_{11}^2 + h_{12}^2)$. Then
the divergence-free condition is equivalent to the Cauchy-Riemann equation 
for $(h_{11} - i h_{12})(z)$ at the origin. We next look into this situation more closely.

\subsection{\weil metric and \weil cometric}
First from the discussion in modeling the \teich space as a homogeneous space 
of ${\rm QS}({\Gamma})$ for the Fuchsian group $\Gamma$,   without loss of generality, 
by using a M\"{o}bius 
transformation we may assume any given point $p$ to be the origin $O$ of
the Poincar\'{e} disc.   
Let $z=x + iy$ be the standard Euclidean coordinate system at the origin.   Note that this coordinate
system matches with the 
geodesic normal coordinate system at $O(=p)$, namely $G = \lambda(z)( dx^2 + dy^2 )$ with 
$\lambda(O) = 1$ and $\partial \lambda |_O = 0$, as the first derivatives of  $4/(1-|z|^2)^2$ at $z=0$ 
all vanish, which in turn makes all the Christoffel symbols vanish.
Then the function $(h_{11} 
- i h_{12})(z)$, where these indices denote the isothermal coordinates $x$ and $y$,
is holomorphic in $z$ at the origin.  

We recall that the cotangent space of \teich space $T^*_{[G]} \T
$ at a conformal structure $[G]$ has been identified with the space ${\rm QD}
(\Sigma)$ of holomorphic quadratic differentials\index{holomorphic quadratic differential} on the Riemann surface $
(\Sigma, [G])$. Thus the correspondence between the tangent vectors and the cotangent vectors is
\[
  h_{11} \, dx \otimes dx + h_{12}\, dx \otimes dy + h_{12} \, dy \otimes dx + (-h_{11}) \, dy \otimes dy \longleftrightarrow (h_{11} - i h_{12})(z) dz^2,
\]
the former with respect to a geodesic normal coordinate chart, and the 
latter with an isothermal coordinate chart. 
 The \weil cometric defined for the elements of   ${\rm QD}(\Sigma)$ has the form 
\[
\langle h_1^*, h_2^* \rangle_{L^2(G)} = \int_\Sigma \phi (z) \overline{\psi} (z) \frac{|dz|^2}{\rho^2(z)}
\] 
where $h^*_1 (z) = \phi(z) dz^2$ and $h^*_2 (z) = \psi (z) dz^2$ locally, and 
the hyperbolic metric $G$ with respect to the isothermal coordinate $z$ is
given as $\rho^2(z)|dz|^2$.  It is clear from the preceding argument that the two $L^2$-parings coincide, when restricted to the respective deformations of 
trace-free divergence-free tensors, and of holomorphic quadratic differentials.\\

\subsection{$L^2$-decomposition theorem of Hodge-type} 

We consider the 
$L^2$-decomposition of the tangent space $T_G {\cal M}$\index{$L^2$-decomposition for $T_G {\cal M}$}.  After having characterized 
the tangent vectors to the \teich space ${\cal M}_{-1}/{\rm Diff}_0 \Sigma$, it 
seems unnecessary to further investigate the linear structure.  However, the 
precise formulation of the $L^2$-decomposition becomes crucial in formulating 
the nonlinear strucutre, namely the curvature of the spaces. The following 
statement is an adaptation to dimension two of the theorem by Fischer-Marsden\cite{FM} 
concerning the decomposition of the deformation space of a constant scalar 
curvature metric in higher $(>2)$ dimensions.  It should be remarked that in the 1980s, Fischer and
Tromba \cite{FT1, FT2, Tr1} undertook the task of rewriting  \teich theory from a Riemannian geometric viewpoint.
In particular, they  laid out the decomposition theory of the deformation tensors
in $T_G {\cal M}_{-1}$.  Below, we develop a theory where the decomposition of the bigger linear space $T_G {\cal M} = T_G {\cal M}_{-1} \oplus (T_G {\cal M}_{-1} )^\perp$ is
addressed.
    
We have already identified the adjoint operator of the divergence operator $\delta_G$
with the Lie derivative of $G$ up to a constant; 
\[
\langle h, L_X G \rangle_{L^2(G)} = - 2 \langle \delta_G h, X \rangle_{L^2(G)}
\]    
which in turn can be stated as
\[
\delta^*_G: X \mapsto - \frac12 L_X G 
\] 
for $X \in {\mathfrak X}(\Sigma)$, the space of smooth vector fields on $\Sigma$.

We can also write down the adjoint 
operator of the Lichnerowicz operator ${\cal L}_G$ by noting the following:
\begin{eqnarray*}
\langle {\cal L}_G^* f, h \rangle_{L^2(G)} & = &  \langle f, {\cal L}_G h \rangle_{L^2(G)} \\
 & = & \int_\Sigma f(x) \big[(-\triangle_G - K)\tr_G h + \delta_G \delta_G h  \big](x) \,\, d \mu_G (x)\\
 & = &  \int_\Sigma \langle  \{ (-\triangle_G - K)f \}G + {\rm Hess}_G f, h \rangle_{G(x)} \,\, d \mu_G(x). 
\end{eqnarray*}
Hence 
\[
{\cal L}_G^* f =  (-\triangle_Gf - Kf)  G + {\rm Hess}_G f.
\]
    
For the following decomposition theorem \cite{Y1}, we restrict ourselves to the case $K \equiv -1$, i.e. when the surfaces are uniformized by hyperbolic metrics.     
\\

\begin{theorem} Suppose that $G$ is a hyperbolic metric on $\Sigma$ and that $h$ is a smooth symmetric $(0,2)$-tensor defined over $\Sigma$.  Then there is a unique $L^2$-orthogonal decomposition of $h$ as a tangent vector in $T_G {\cal M}$,
\[
h = P_G(h) + L_X G + {\cal L}^* f, 
\] 
where $P_G(h)$ is the projection of $h$ onto $T_G \T$, $L_X G$ is a
Lie derivative and ${\cal L}_G^* f$ is a tensor perpendicular to ${\cal M}_{-1}$.
Here the vector field $X$ solves the following equation uniquely
\[
\delta_G \delta_G^* X = - \frac12 \delta_G h
\]
and is smooth, 
the function $f$ solves the following equation uniquely
\[
{\cal L}_G {\cal L}_G^* f = {\cal L}_G h
\]
and is smooth.
Consequently $P_G(h)$ is uniquely determined to be a smooth tensor given by
\[
P_G (h) = h - L_X G - {\cal L}_G.
\]
Each of the three terms belongs to each of the mutually $L^2$-orthogonal 
components
\[
T_G {\cal M} = T_G {\T} \oplus_{L^2(G)} T_G {\rm Diff}_0 \Sigma \oplus_{L^2(G)} (T_G {\cal M}_{-1})^\perp. 
\]    

\end{theorem}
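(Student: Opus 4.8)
The plan is to read the three displayed equations of the statement as invertible elliptic problems, build the decomposition from their solutions, and then check directly that the resulting pieces land in the asserted subspaces and are mutually $L^2$-orthogonal. One works throughout on the fixed closed hyperbolic surface $(\Sigma, G)$ (so of genus at least two), first on Sobolev or H\"older completions and then returning to smooth sections by elliptic regularity. The formal starting point is the dictionary already assembled above: $T_G {\cal M}_{-1} = \ker {\cal L}_G$; the tangent space to the diffeomorphism orbit is $\{ L_X G : X \in {\mathfrak X}(\Sigma) \} = \operatorname{Im} \delta_G^*$, recalling $\delta_G^* X = - \tfrac12 L_X G$; the subspace $(T_G {\cal M}_{-1})^\perp$ consists of tensors ${\cal L}_G^* f$, since $\langle {\cal L}_G^* f, h_0 \rangle = \langle f, {\cal L}_G h_0 \rangle = 0$ whenever $h_0 \in \ker {\cal L}_G$; and $T_G \T = \ker \delta_G \cap \ker {\cal L}_G$ is exactly the space of trace-free divergence-free tensors. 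Finally, the flow of a vector field is by diffeomorphisms and hence preserves ${\cal M}_{-1}$, so $L_X G \in \ker {\cal L}_G$, i.e. $T_G {\rm Diff}_0 \Sigma \subset T_G {\cal M}_{-1}$.

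The one genuinely computational input is the identity $\delta_G {\cal L}_G^* f = 0$ for every smooth $f$. It follows from $\delta_G(uG) = du$ for a scalar $u$ together with the Bochner-type commutation $\delta_G({\rm Hess}_G f) = d(\triangle_G f) + {\rm Ric}_G(\nabla f, \cdot) = d(\triangle_G f) + K\, df$, the curvature term being $K\, df$ because $G$ is Einstein with ${\rm Ric}_G = K G$: adding $\delta_G\big((-\triangle_G f - K f) G\big) = -d(\triangle_G f) - K\, df$ to this gives zero. Two consequences matter. First, $\operatorname{Im} {\cal L}_G^* \subset \ker \delta_G$, so the two ``gauge'' directions are automatically orthogonal: $\langle L_X G, {\cal L}_G^* f \rangle = -2 \langle \delta_G^* X, {\cal L}_G^* f \rangle = -2 \langle X, \delta_G {\cal L}_G^* f \rangle = 0$. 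Second, $\delta_G(h - {\cal L}_G^* f) = \delta_G h$ for any $h$, which is exactly why the equation for $X$ in the statement involves $\delta_G h$ rather than the divergence of a corrected tensor.

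Next come the two solvability facts. The operator $\delta_G^*$ is injective, because $\delta_G^* X = 0$ forces $L_X G = 0$, making $X$ a Killing field, and a closed hyperbolic surface has none; since $\delta_G \delta_G^*$ is a formally self-adjoint elliptic second-order operator on ${\mathfrak X}(\Sigma)$ (definite principal symbol) with trivial kernel by $\langle \delta_G \delta_G^* X, X \rangle = \| \delta_G^* X \|^2$, it is an isomorphism on smooth sections, so $\delta_G \delta_G^* X = -\tfrac12 \delta_G h$ has a unique smooth solution $X$. The operator ${\cal L}_G^*$ is also injective: if $(-\triangle_G f - K f) G + {\rm Hess}_G f = 0$, its $G$-trace gives $-\triangle_G f - 2Kf = 0$, which for $K = -1$ forces $f \equiv 0$ by the same integration-by-parts used above to extract the trace-free condition; and a symbol computation shows ${\cal L}_G {\cal L}_G^*$ is a formally self-adjoint elliptic fourth-order operator on $C^\infty(\Sigma)$ with principal symbol $f \mapsto |\xi|^4 f$, so $\langle {\cal L}_G {\cal L}_G^* f, f \rangle = \| {\cal L}_G^* f \|^2$ and injectivity of ${\cal L}_G^*$ make it an isomorphism and ${\cal L}_G {\cal L}_G^* f = {\cal L}_G h$ has a unique smooth solution $f$.

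It remains to assemble and verify. Put $P_G(h) := h - L_X G - {\cal L}_G^* f$ with $X, f$ the solutions just obtained. Applying $\delta_G$ and using $\delta_G {\cal L}_G^* f = 0$ and $\delta_G L_X G = -2 \delta_G \delta_G^* X = \delta_G h$ gives $\delta_G P_G(h) = 0$; applying ${\cal L}_G$ and using ${\cal L}_G L_X G = 0$ and ${\cal L}_G {\cal L}_G^* f = {\cal L}_G h$ gives ${\cal L}_G P_G(h) = 0$, whereupon, since $\delta_G P_G(h) = 0$ already, the curvature equation ${\cal L}_G P_G(h) = -(\triangle_G + K)\tr_G P_G(h) + \delta_G\delta_G P_G(h)$ collapses to $-(\triangle_G + K)\tr_G P_G(h) = 0$ and hence $\tr_G P_G(h) = 0$; thus $P_G(h)$ is trace-free and divergence-free, i.e. lies in $T_G \T$. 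Mutual $L^2$-orthogonality is then formal: $\langle P_G(h), L_Y G \rangle = -2\langle \delta_G P_G(h), Y \rangle = 0$, $\langle P_G(h), {\cal L}_G^* g \rangle = \langle {\cal L}_G P_G(h), g \rangle = 0$, and $\langle L_X G, {\cal L}_G^* f \rangle = 0$ was noted above. For uniqueness, any decomposition $h = h' + L_{X'} G + {\cal L}_G^* f'$ with $h' \in T_G\T$ has, on applying $\delta_G$ (again $\delta_G {\cal L}_G^* f' = 0$), that $X'$ solves $\delta_G\delta_G^* X' = -\tfrac12 \delta_G h$, hence $X' = X$; and on applying ${\cal L}_G$, that $f'$ solves ${\cal L}_G {\cal L}_G^* f' = {\cal L}_G h$, hence $f' = f$; so $h' = P_G(h)$. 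The direct-sum description of $T_G {\cal M}$ is then what has been proved. The step I expect to require the most care is the Bochner identity $\delta_G {\cal L}_G^* = 0$ together with the principal-symbol computation for ${\cal L}_G {\cal L}_G^*$; everything past that is bookkeeping plus standard elliptic Fredholm theory and regularity.
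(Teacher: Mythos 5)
Your proposal is correct and follows essentially the same route as the paper: unique solvability of the two self-adjoint elliptic systems $\delta_G\delta_G^*X=-\tfrac12\delta_G h$ and ${\cal L}_G{\cal L}_G^*f={\cal L}_G h$ via their trivial kernels (no Killing fields; the trace identity $-\triangle_G f+2f=0$), the key lemmas ${\cal L}_G L_XG=0$ and $\delta_G{\cal L}_G^*f=0$ (your Bochner commutation is the paper's Ricci-identity computation), and then formal orthogonality. Your explicit verification that $P_G(h)$ is trace-free and divergence-free, and the uniqueness check by applying $\delta_G$ and ${\cal L}_G$ to an arbitrary decomposition, are slightly more detailed than the paper's write-up but not a different argument.
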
 

We remark that this decomposition can be called of Hodge type for it identifies the tangential directions
to  \teich space with the intersection of the kernel of the differential operator $\delta_G$ and the kernel 
of ${\cal L}_G$; for both of those there are associated elliptic operators $\delta_G \delta_G^*$ and ${\cal L}_G {\cal L}_G^*$.    

\begin{proof}
The differential operators $\delta_G \delta_G^* $
and ${\cal L}_G {\cal L}_G^*$ are both elliptic, self-adjoint, and with trivial 
kernel (and hence trivial co-kernel). The triviality of the kernel of $\delta_G \delta_G^* $ follows
from first noting that 
$
0 = \langle \delta_G \delta^*_G X, X \rangle_{L^2(G)} = \langle \delta^*_G X,  \delta^*_G X \rangle_{L^2(G)} 
$
implies $\delta^*_G X = 0$ and then
from the non-existence of Killing vector fields on $\Sigma$ due to the negative curvature.  
The triviality of the kernel of ${\cal L}_G {\cal L}_G^*$  follows as $0=\langle {\cal L}_G {\cal L}^*_G f, f \rangle_{L^2(G)} =  \langle  {\cal L}^*_G f, {\cal L}^*_G f \rangle_{L^2(G)}$ implies ${\cal L}^*_G f =0$.  By taking the trace of the equation
$
{\cal L}_G^* f  =0, 
$
we obtain
$
-\triangle_G f + 2 f = 0
$
which implies $f \equiv 0$. This shows, by the standard theory of linear equations of elliptic type \cite{GT},  
that one can solve each of the two equations uniquely to specify the vector field $X=X(h)$ and the 
function $f=f(h)$, given the data $h$.

In showing the $L^2$-orthogonality, we need the following two lemmas, which trigger a series of 
orthogonal relations. 
 
\begin{lemma}
For any vector field $Y$ on $\Sigma$, we have ${\cal L}_G L_Y G =0$. 
\end{lemma}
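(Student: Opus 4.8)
The plan is to exploit the fact that $\mathcal{L}_G$ is, by its very derivation, the linearization of the scalar-curvature operator (more precisely, of the sectional curvature $K$) at the metric $G$, and that a Lie derivative $L_Y G$ is the infinitesimal generator of the pullback action $\varphi_t^* G$. Since pulling a metric back by a diffeomorphism does not change its curvature function pointwise but only moves it around, the variation of $K$ along $t \mapsto \varphi_t^* G$ should be tracked entirely by the transport of the (constant!) curvature function along the flow of $Y$. When $G$ is hyperbolic, $K \equiv -1$ is constant, so this transport term vanishes, and hence $\mathcal{L}_G L_Y G = \dot{K}[L_Y G] = 0$.

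Concretely, first I would recall from the variational computation already carried out in the excerpt that for any symmetric $(0,2)$-tensor $h$ one has $\dot{K}[h] = -(\triangle_G + K)\tr_G h + \delta_G \delta_G h = \mathcal{L}_G h$, where $\dot{K}[h]$ denotes $\frac{d}{dt}\big|_{t=0} K(G + th)$. Next I would observe the diffeomorphism-naturality of the curvature functional: for the path $G_t = \varphi_t^* G$ with $\frac{d}{dt}\big|_{t=0}\varphi_t = Y$, one has $K(G_t) = K(\varphi_t^* G) = K(G)\circ \varphi_t$, since the sectional curvature of a pulled-back metric at a point is the sectional curvature of the original metric at the image point. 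Differentiating at $t = 0$ gives $\mathcal{L}_G (L_Y G) = \dot{K}[L_Y G] = \frac{d}{dt}\big|_{t=0}\big(K(G)\circ\varphi_t\big) = dK(G)(Y) = Y(K(G))$. Since $G$ is hyperbolic, $K(G) \equiv -1$ is constant, so $Y(K(G)) = 0$, which is the claim.

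Alternatively, if one prefers to avoid invoking naturality of $K$ and argue purely by tensor calculus, one can substitute $h = L_Y G$ directly into $\mathcal{L}_G h = -(\triangle_G + K)\tr_G h + \delta_G\delta_G h$ and simplify using $K = -1$. Here $\tr_G(L_Y G) = 2\,\mathrm{div}_G Y$ (twice the divergence of $Y$) and $\delta_G(L_Y G)$ can be expanded via the Bochner/Weitzenböck identity for the operator $\delta_G \delta_G^*$ acting on vector fields; on a hyperbolic surface this produces exactly the Ricci-curvature term that, together with the constant $K$, forces the combination to collapse to zero. This is the more computational route; the naturality argument is cleaner.

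The main obstacle is making the diffeomorphism-naturality step fully rigorous at the level of the formal $L^2$-geometry: one must be careful that $L_Y G$ is genuinely tangent to $\mathcal{M}$ (it is, being a smooth symmetric $(0,2)$-tensor) and that the identity $K(\varphi_t^* G) = K(G)\circ\varphi_t$, which is elementary pointwise Riemannian geometry, may legitimately be differentiated in $t$ to yield an identity among the linearized operators $\mathcal{L}_G$. Once that bookkeeping is in place, the vanishing is immediate from $K \equiv -1$. I would also note in passing that this lemma is exactly what makes $T_G\mathrm{Diff}_0\Sigma = \{L_Y G\}$ sit inside $\ker\mathcal{L}_G$, consistent with the decomposition theorem just stated.
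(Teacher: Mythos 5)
Your proposal is correct and follows essentially the same route as the paper: the paper's proof likewise observes that $\varphi_t^* G$ stays in ${\cal M}_{-1}$ (all pullbacks of a hyperbolic metric have constant curvature $-1$), so its velocity $L_Y G$ lies in $T_G {\cal M}_{-1} = \ker {\cal L}_G$, the kernel of the linearized curvature operator. Your version merely makes the naturality identity $K(\varphi_t^* G) = K(G)\circ\varphi_t$ and the resulting transport term $Y(K(G)) = 0$ explicit, which is a harmless (and arguably clarifying) elaboration of the same idea.
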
 

This follows from the simple observation that $L_Y G$ is a deformation tensor induced by a one-parameter family of isometries $\phi^*_t G$ with $\dot{\phi}_0 = Y$,  in particular preserving the 
curvature constraint, hence an element of $T_G {\cal M}_{-1}$, which is the kernel of the differential operator ${\cal L}_G$.   
 
\begin{lemma}
For any smooth function $\phi$ on $\Sigma$, we have $\delta_G {\cal L}_G^* f = 0$.   
\end{lemma}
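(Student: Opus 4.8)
The plan is to deduce this from the two $L^2$-adjoint identities already established, together with the preceding lemma, rather than from a direct tensor computation. First I would recall that $\delta_G^*$ is the $L^2$-adjoint of the divergence $\delta_G$, i.e. $\langle \delta_G h, X\rangle_{L^2(G)} = \langle h, \delta_G^* X\rangle_{L^2(G)}$ for all $X \in {\mathfrak X}(\Sigma)$ and all symmetric $(0,2)$-tensors $h$, and that ${\cal L}_G^*$ is by construction the $L^2$-adjoint of the Lichnerowicz operator ${\cal L}_G$; since $\Sigma$ is closed, moving either operator across the $L^2$-pairing produces no boundary terms.

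Then, for an arbitrary smooth vector field $X$ on $\Sigma$, I would chase the pairing
\[
\langle \delta_G {\cal L}_G^* \phi, X\rangle_{L^2(G)} = \langle {\cal L}_G^* \phi, \delta_G^* X\rangle_{L^2(G)} = \langle \phi, {\cal L}_G \delta_G^* X\rangle_{L^2(G)},
\]
and invoke $\delta_G^* X = -\frac12 L_X G$ together with the preceding lemma to obtain ${\cal L}_G \delta_G^* X = -\frac12 {\cal L}_G L_X G = 0$. Hence the pairing vanishes for every $X$, and since a smooth vector field on a closed manifold that is $L^2$-orthogonal to all smooth vector fields must vanish, this yields $\delta_G {\cal L}_G^* \phi = 0$.

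As a cross-check, or an alternative route, one could instead argue point-wise in a geodesic normal chart: with ${\cal L}_G^* \phi = (-\triangle_G \phi - K\phi)G + {\rm Hess}_G \phi$, the term $\delta_G((-\triangle_G\phi - K\phi)G)$ is, up to sign, $d(-\triangle_G\phi - K\phi)$ because $G$ is parallel, while commuting the two covariant derivatives in $\delta_G({\rm Hess}_G \phi)$ introduces a Ricci term, and using $R_{ij} = KG_{ij}$ the two contributions cancel exactly. Conceptually this is the two-dimensional incarnation of the fact that the formal adjoint of the linearized scalar-curvature operator is divergence-free, a consequence of the contracted second Bianchi identity. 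I do not expect any genuine obstacle here; the only points requiring care are the sign conventions for $\delta_G$ and ${\cal L}_G^*$ and the routine nondegeneracy step passing from "all $L^2$-pairings vanish" to "the tensor vanishes".
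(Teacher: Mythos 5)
Your proposal is correct, but your primary argument takes a genuinely different route from the paper's. The paper proves the lemma by a direct pointwise computation in a geodesic normal chart: it expands $\delta_G\{(-\triangle_G f + f)G + \mathrm{Hess}_G f\}$, commutes the covariant derivatives in the Hessian term via the Ricci identity, and uses $R_{ij}=-\delta_{ij}$ to see the exact cancellation --- precisely your ``cross-check'' route, which you have sketched accurately (it is indeed the two-dimensional instance of the contracted Bianchi identity saying that the adjoint of the linearized scalar curvature is divergence-free). Your main argument instead is a global duality chase: $\langle \delta_G {\cal L}_G^*\phi, X\rangle_{L^2(G)} = \langle {\cal L}_G^*\phi, \delta_G^* X\rangle_{L^2(G)} = -\tfrac12\langle \phi, {\cal L}_G L_X G\rangle_{L^2(G)} = 0$ by the preceding lemma, followed by nondegeneracy of the $L^2$-pairing on smooth vector fields. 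The signs are consistent with the paper's conventions ($\delta_G^* X = -\tfrac12 L_X G$ and $\langle h, L_X G\rangle = -2\langle \delta_G h, X\rangle$), and the conclusion is pointwise since $\delta_G{\cal L}_G^*\phi$ is smooth and may be paired with itself. It is worth noting what each approach buys: your duality argument is softer and avoids any curvature computation, but it leans on the closedness of $\Sigma$ (boundary-free integration by parts) and, amusingly, inverts the paper's logic --- the paper proves the lemma pointwise and then \emph{deduces} the orthogonality $\langle {\cal L}_G^*\phi, L_Y G\rangle = 0$ as a corollary, whereas you derive that orthogonality first from the adjoint definitions and the previous lemma. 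The paper's local computation has the advantage of surviving in the noncompact setting of Section 3.5, where the same identity is needed on ${\mathbb H}^2$ and the integration-by-parts step requires separate justification.
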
 
\begin{proof}
First choose a geodesic normal coordinate chart centered at $p$,  $\{ x^i\}$ so that $G = \delta_{ij}$ and $G_{ij;k}=0$ for all $i,j$ ad $k$ where $``;"$ stands for the covariant derivative.  Then

\begin{eqnarray*}
\delta_G {\cal L}_G^* f  & = & \delta_G \{ (- \triangle_G f + f) G + {\rm Hess}_G f \}  \\
 & = & - \{\triangle_G f+ f\}_j \delta_{ij} + f_{ij;j} \\
 & = &  - \{\triangle_G f+ f\}_j \delta_{ij} + f_{jj;i} + R_{ij} f_j \\
 & = & 0 
\end{eqnarray*}
where the Ricci identity is used to interchange the order of the covariant derivatives for the second
equality, and $R_{ij} = - \delta_{ij}$ on the hyperbolic surface $\Sigma$.
\end{proof}

We remark that an immediate consequence of the second lemma is that tensors of  type $L_Y G$  and 
type ${\cal L}_G^* \phi$ are mutually $L^2$-perpendicular for an arbitrary vector field $Y$ and an arbitrary function $\phi$, due to the
equality $\langle \delta_G {\cal L}_G^* \phi , -Y \rangle_{L^2(G)} = \langle {\cal L}_G^* \phi,  L_Y G \rangle_{L^2(G)}.$

Hence we get the first orthogonality:
\[
\langle L_X G, {\cal L}^*_G f \rangle_{L^2(G)} = 0.
\]
By projecting $h$ to $T_G \T$ and to $(T_G {\cal M}_{-1})^\perp$ respectively, we have
\begin{eqnarray*}
\langle P_G(h),  {\cal L}^*_G f \rangle_{L^2(G)} & = &  \langle h - L_X G - {\cal L}^*_G f,  {\cal L}^*_G f \rangle_{L^2(G)}  \\
 & = & \langle {\cal L}_G h -   {\cal L}_G L_X G - {\cal L}_G {\cal L}^*_G f,   f \rangle_{L^2(G)} \\
 & = & \langle {\cal L}_G h - {\cal L}_G {\cal L}^*_G f,  f \rangle_{L^2(G)} \\
 & = & 0
\end{eqnarray*}
Finally the orthogonality between $P_G(h)$ and $L_X G$ can be checked by
\begin{eqnarray*}
\langle P_G(h),  L_X G \rangle_{L^2(G)} & = &  \langle h - L_X G - {\cal L}^*_G f,  L_X G \rangle_{L^2(G)}  \\
 & = & \langle \delta_G h -   \delta_G  L_X G - \delta_G {\cal L}^*_G f,  -X \rangle_{L^2(G)} \\
 & = & \langle \delta_G h +  2 \delta_G  \delta_G^* X,  -X \rangle_{L^2(G)} \\
 & = & 0
\end{eqnarray*}
We have used above the fact  that $f$ and $X$ solve the elliptic system
\[
{\cal L}_G {\cal L}_G^* f = {\cal L}_G h, \,\,\, 
\delta_G \delta_G^* X = - \frac12  \delta_G h
\] uniquely.
\end{proof}

\subsection{$L^2$-decomposition theorem for the Universal \teich space\index{$L^2$-decomposition Theorem for the Universal \teich Space}.}   
\subsubsection{$L^2$-decomposition theorem}
In an attempt to introduce a Riemannian structure on the universal \teich space,  in particular the
\weil metric, we look at a subspace of the tangent space at the identity in ${\cal UT}$ consisting of
$L^2$-integrable tensors.  That subspace is a Hilbert space, and the quadratic form is the \weil pairing.
We generalize the $L^2$-decomposition theorem in the previous section in this context as follows \cite{Y1}.

\begin{theorem}
Suppose that $G_0$ is the standard hyperbolic metric on the unit disc ${\bf D}$, namely ${\mathbb H}^2 = ({\bf D}, G_0)$ and $h$ is an $L^2({\mathbb H}^2)$-integrable symmetric $(0,2)$-tensor defined over ${\mathbb H}^2$.   Then there is a unique $L^2$-orthogonal decomposition of $h$ as a tangent vector belonging to $T_{G_0} {\cal UT}$ as follows,
\[
h = P_{G_0}(h) + L_X G_0 + {\cal L}^*_{G_0} f
\] 
where $L_X G_0$ is a 
Lie derivative where  $X$ is  a vector field of finite $L^2({\mathbb H}^2)$-norm satisfying 
\[
\delta_{G_0} \delta^*_{G_0} X = - \frac12 \delta_{G_0} h,
\] 
where  ${\cal L}^*_{G_0} f$ is a symmetric $(0,2)$-tensor 
with the function $f$ satisfying the equation
\[
{\cal L}_{G_0} {\cal L}_{G_0}^* f = {\cal L}_{G_0} f
\]
and  where $P_{G_0}(h)$ is the projection of $h$ onto the universal \teich space
specified as $P_{G_0}(h)= h - L_X G_0 - {\cal L}_{G_0}^* h$. 
\end{theorem}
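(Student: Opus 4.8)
The plan is to mirror the proof of the compact-surface decomposition theorem, with the essential difference that on the non-compact complete manifold $\mathbb H^2$ one cannot invoke elliptic regularity and Fredholm theory on a closed manifold, but must instead use $L^2$-theory for the self-adjoint elliptic operators $\delta_{G_0}\delta_{G_0}^*$ and $\mathcal L_{G_0}\mathcal L_{G_0}^*$ on the complete manifold $\mathbb H^2$. First I would record the two operator identities established earlier, namely $\delta_{G_0}^* X = -\tfrac12 L_X G_0$ and $\mathcal L_{G_0}^* f = (-\triangle_{G_0}f - Kf)G_0 + \mathrm{Hess}_{G_0} f$ with $K=-1$, so that the two sub-equations in the statement are genuinely the normal equations for orthogonal projection onto the images of $\delta_{G_0}^*$ and $\mathcal L_{G_0}^*$. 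The algebraic identities $\mathcal L_{G_0} L_Y G_0 = 0$ (Lie derivatives preserve the curvature constraint) and $\delta_{G_0}\mathcal L_{G_0}^* f = 0$ (the pointwise Ricci-identity computation from the preceding lemma, valid pointwise hence unchanged on $\mathbb H^2$) carry over verbatim, and they are what force the three summands to be mutually $L^2$-orthogonal once the equations are solved.

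The analytic core is solvability. For the vector-field equation, I would note that on $\mathbb H^2$ the operator $\delta_{G_0}\delta_{G_0}^* = \tfrac12(\nabla^*\nabla + \text{curvature terms})$ is a non-negative self-adjoint elliptic operator with a spectral gap: the hyperbolic plane has $L^2$-spectrum of the relevant Laplace-type operator bounded below by a positive constant (the analogue of the bottom of the spectrum $1/4$ for functions, and strict positivity for the one-form/Killing operator because $\mathbb H^2$ admits no $L^2$ Killing fields). Strict positivity of the bottom of the spectrum gives a bounded inverse on $L^2$, hence a unique $X$ of finite $L^2$-norm solving $\delta_{G_0}\delta_{G_0}^* X = -\tfrac12\delta_{G_0} h$ whenever $\delta_{G_0}h \in L^2$; elliptic regularity then upgrades $X$ to be smooth. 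The same scheme applies to $\mathcal L_{G_0}\mathcal L_{G_0}^* f = \mathcal L_{G_0} h$: taking the trace of $\mathcal L_{G_0}^* f = 0$ yields $-\triangle_{G_0} f + 2f = 0$, and since $2$ lies strictly below nothing problematic — indeed $-\triangle_{G_0}+2$ is bounded below by $2>0$ on $L^2(\mathbb H^2)$ — the kernel is trivial, so $\mathcal L_{G_0}\mathcal L_{G_0}^*$ is injective with closed range, and invertibility on its range plus the decay of $h$ produces a smooth $f$.

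With $X$ and $f$ in hand, define $P_{G_0}(h) := h - L_X G_0 - \mathcal L_{G_0}^* f$; the three orthogonality computations are then formally identical to the compact case, integration by parts on $\mathbb H^2$ producing no boundary term precisely because $X$, $f$, and $h$ (and their derivatives, by elliptic estimates) lie in $L^2$ and decay at infinity. That $P_{G_0}(h)$ is trace-free and divergence-free, hence a genuine tangent vector to $\mathcal{UT}$ at $G_0$, follows from $\delta_{G_0}P_{G_0}(h) = \delta_{G_0}h + 2\delta_{G_0}\delta_{G_0}^* X = 0$ and from $\mathcal L_{G_0}P_{G_0}(h) = \mathcal L_{G_0}h - \mathcal L_{G_0}\mathcal L_{G_0}^* f = 0$ combined with $\delta_{G_0}P_{G_0}(h)=0$ forcing the trace to vanish as in the earlier argument. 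Uniqueness is immediate from $L^2$-orthogonality of the three closed subspaces. The main obstacle I anticipate is purely functional-analytic: justifying the solvability and the absence of boundary contributions in the integrations by parts, i.e. establishing the precise weighted-$L^2$ / $L^2$-Sobolev framework on $\mathbb H^2$ in which $\delta_{G_0}\delta_{G_0}^*$ and $\mathcal L_{G_0}\mathcal L_{G_0}^*$ are invertible with smooth solutions — everything algebraic is a transcription of the closed-surface proof, but the spectral-gap and decay arguments are where the universal setting genuinely requires new input.
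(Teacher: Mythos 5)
Your proposal is correct in substance and its analytic core coincides with the paper's: both replace the closed-surface Fredholm argument by strict positivity (coercivity) of the two self-adjoint elliptic operators $\delta_{G_0}\delta_{G_0}^*$ and ${\cal L}_{G_0}{\cal L}_{G_0}^*$ on $\h$, and the algebraic orthogonality identities ${\cal L}_{G_0}L_Y G_0=0$ and $\delta_{G_0}{\cal L}_{G_0}^* f=0$ are carried over verbatim, exactly as you say. The route differs in one respect: you propose to work directly on $L^2(\h)$ via the spectral theorem, whereas the paper first proves the coercivity inequalities $\langle -\delta_{G_0}\delta_{G_0}^* X, X\rangle \geq C\|X\|^2_{H^1}$ and $\langle {\cal L}_{G_0}{\cal L}_{G_0}^* f, f\rangle \geq C'\|f\|^2_{H^2}$ for \emph{compactly supported} data (where integration by parts needs no justification), solves the equations there, and then treats general $L^2$-integrable $h$ by approximating with compactly supported $h_i$, using the coercivity bound $C\|X_i - X_j\| \leq \|h_i - h_j\|$ to show the solutions form Cauchy sequences whose limits inherit the orthogonality. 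This approximation scheme is what lets the paper sidestep the decay issues you flag at the end. One step of your argument as written is too quick: on a non-compact manifold the absence of $L^2$ Killing fields (triviality of the kernel) does \emph{not} by itself imply a spectral gap, since $0$ could lie in the essential spectrum without being an eigenvalue; you need the quantitative coercivity estimate itself, which is precisely the content of the paper's lemma and which you correctly identify as the point requiring genuine new input.
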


\begin{proof}
By a density argument, we can approximate $h$ by a sequence $\{h_i\}$ of compactly supported smooth 
symmetric $(0,2)$-tensors on ${\mathbb H}^2$, such that $\lim_{i \rightarrow \infty} \| h  - h_i\| = 0$. 
Hence we first consider the case where the tensor $h$ is compactly supported.  We treat the general case
at the end of the proof.

We first need to replace all the integration by parts argument in the $L^2$-decomposition 
theorem for the closed surface case by a strictly coercive property of the two elliptic differential operators
$\delta_{G_0} \delta^*_{G_0}$ and ${\cal L}_{G_0} {\cal L}_{G_0}^*$.  First we establish the statement 
for the compactly supported cases.        

\begin{lemma}
The differential operators $\delta_{G_0} \delta^*_{G_0}$ and ${\cal L}_{G_0} {\cal L}_{G_0}^*$
defined on ${\mathfrak X}_0^\infty ({\mathbb H}^2) \cap H^1 ({\mathbb H}^2)$ and $C_0^\infty ({\mathbb H}^2) \cap H^2({\mathbb H}^2)$ respectively satisfy inequalities
\[
\langle -\delta_{G_0} \delta^*_{G_0} X, X \rangle_{L^2} \geq C \|X\|_{H^1}^2 
\]
\[
\langle {\cal L}_{G_0} {\cal L}_{G_0}^* f, f \rangle_{\L^2} \geq C' \| f\|^2_{H^2} 
\] 
for some constants $C, C' >0$.
\end{lemma}

The proof of the lemma follows from integrations by parts inside the integral of the $L^2$-pairing, which are allowed for the functions and the tensors are compactly supported.  

This lemma, together with the fact that the two differential operators are self-adjoint and elliptic and the standard argument from the linear PDE theory \cite{GT} give us that there are unique solutions 
$X$ and $f$ to the equations 
$
\delta_{G_0} \delta^*_{G_0} X = - \delta_{G_0} h
$
and 
$
{\cal L}_{G_0} {\cal L}_{G_0}^* f = {\cal L}_{G_0} f
$
for a compactly supported data $h$.  This says that we have an $L^2$-decomposition of compactly supported tensors $h$'s.  

Now coming back to the general case where $h$ is $L^2$-integrable, let $\{h_i\}$ be an approximating 
sequence, each compactly supported, convergent to $h$ in $L^2$-norm.  For each $i$, we can solve the pair of equations $
\delta_{G_0} \delta^*_{G_0} X_i = - \delta_{G_0} h_i
$
and 
$
{\cal L}_{G_0} {\cal L}_{G_0}^* f_i = {\cal L}_{G_0} f_i
$
for $X_i$ and $f_i$.  We need to show that the sequences $\{X_i\}$ and $\{f_i\}$ are 
both convergent in the respective spaces, and that they solve the equation
$
\delta_{G_0} \delta^*_{G_0} X = - \delta_{G_0} h
$
and 
$
{\cal L}_{G_0} {\cal L}_{G_0}^* f = {\cal L}_{G_0} f
$.     
To see this, first note that 
\begin{align*}
\|L_{X_i - X_j} G_0\|^2 & = & \int_{\h} \langle L_{X_i-X_j} G_0, L_{X_i - X_j} G_0 \rangle_{G_0} d 
\mu_{G_0} \\
 & = & \int_{\h} \langle h_i-h_j, L_{X_i - X_j} G_0 \rangle_{G_0} d 
\mu_{G_0}  \\
 & \leq & \| h_1-h_j \|_{L^2} \| L_{X_i - X_j} G_0 \|_{L^2}.
\end{align*}
where the second equality is due to the $L^2$-decomposition for compactly supported tensors.
This, together with the coercivity of $\delta_{G_0} \delta_{G_0}^*$  which says that 
$C \|X_i - X_j \|_{L^2} \leq \|L_{X_i} G_0 - L_{X_j} G_0 \|_{L^2}$  for some $C>0$, gives  that
$C \|X_i - X_j \|_{L^2} \leq \|h_i - h_j \|_{L^2}$.  This shows that the sequence $\{X_i\}$ is Cauchy in
the space of $H^1$-integrable vector fields on $\h$, and hence convergent to some $X$.   
The elliptic regularity says that for $h$ smooth, so is $X$.  

By an analogous argument, one checks that $\{f_i\}$ is Cauchy in the Sobolev space $H^2$ on $\h$,
and it converges to some smooth $f$ for a smooth data $h$.   

The mutual $L^2$-orthogonality of $P_{G_0} (h_i)$, $L_{X_i} G_0$ and ${\cal L}_{G_0} f_i$ for
each $i$ then induces the orthogonality of $P_{G_0} (h)$, $L_{X} G_0$ and ${\cal L}_{G_0} f$,
proving the statement of the theorem.
\end{proof}

Here we observe this $L^2$-decomposition theorem from the viewpoint of the universal \teich space.
Recall that the universal \teich space is the quotient space of the  space of  hyperbolic metrics consists of  the pulled-back metric
of the standard Poincar\'{e} metric $G_0$ by  all the quasi-conformal (q.c.) diffeomorphisms of the  ${\bf 
D}$ where $G_1$ and $G_2$ are defined to be equivalent when
$G_2 = \phi^* G_1$ for some quasi-conformal diffeomorphism $\phi$ fixing the geometric boundary $
\partial {\bf D} = S^1$. 

By linearizing this picture at $G_0$, namely considering a one-parameter family of quasi-conformal 
diffeomorphisms $\phi_t$ with $\phi_0 = {\rm Id}_{\bf D}$,  and differentiating the pulled-back metrics
$\phi_t^* G_0$ at time $t=0$, we can identify the tangent space of the universal \teich space ${\cal UT}$ at $G_0$ as
\[
T_{G_0} {\cal M}_{-1} = \{ L_Z G_0  \, | \,  Z  \\ 
\mbox{ vector fields generating  
q.c.-diffeomorphisms on ${\bf D}$}  \}.  
\]
Now let $h$ be an  $L^2$-integrable deformation tensor of $G_0$ tangential to ${\cal M}_{-1}$, namely 
assume there is no third component of the type ${\cal L}_{G_0}^* f$ in the $L^2$-decomposition of $h$;  
then $h$ can be written  as a Lie derivative $L_Z G_0$ of $G_0$ for some vector field $X$.
As the $L^2$-decomposition gives $h= P_{G_0} (h) + L_X G_0$ for some vector field $X = X(h)$,
the tangential component $P_{G_0}(h)$ of $h$ to ${\cal UT}$ is of the form $L_{Z-X} G_0$.

Define $Z_{\overline{z}} := \mu$, and $\nu := X_{\overline{z}}$.  Recall from Section 3.5 that these equalities are 
the linearizations of the Beltrami equations at the identity map. Recall the Beltrami equation is of the 
form $w_{\overline{z}} = \mu w_z$.  Now take the Beltrami coefficients to be $\e \mu_0$ for some 
fixed $\mu_0$ and $|\e|$ sufficiently small so that $\e \mu_0$ remains in the unit ball $L^\infty ({\bf D})_1$
in the complex Banach space $L^\infty ({\bf D})$.  Differentiate the equation $w(\e)_{\overline{z}} = \e \mu_0 w(\e)_z$
in $\e$ and evaluate at $\e = 0$ to obtain
\[
\dot{w}_{\overline{z}} (0)= \mu_0 
\]
as $w(0) = z$. Denote the vector field $\dot{w}(0)$ by $V(\mu_0)$. In general the equation $V_{\overline{z}}= \mu$ can be solved uniquely on the disc \cite{Ah1, NV, Na} using an integral kernel on the disc.  

The resulting vector field $V(h):= Z-X$ defined on the disc  is a particular type  such that
$V_{\overline{z}} =: P[\mu]$ is a so-called harmonic Beltrami differential.  Note that $P[\mu] = \mu - \nu$.
For  detailed expositions on harmonic Beltrami differentials, see \cite{Ah1, W4, NV}.

Let $\rho(z)$ be the area density 
of the hyperbolic metric $G_0$ with respect to the Euclidean area density of the unit disc.
Recall that $Z = Z(h)$ is an $L^2({\h})$-integrable vector field. On the Poincar\'{e} disc model of $\h$,
as the area density $\rho(z) =  \frac{4}{(1-|z|^2)^2}$ blows up as $|z| \rightarrow 1$, 
$Z(z)$ has to decay as $z$ approaches to the geometric boundary $\{ |z|=1 \}.$  Namely the 
one-parameter families of quasi-conformal diffeomorphisms $Z$ generates belong to ${\rm QC}_0({\bf D})$.  

The statement of the $L^2$-decomposition theorem corresponds to the so-called Ahlfors's integral
projection operator. Let $B$ be the space $L^\infty(\Gamma_0)_1$ of Beltrami differentials with 
$\Gamma_0$ the trivial group $\{ {\rm Id} \}$.  Let ${\cal B} \subset B$ be the space of harmonic Beltrami 
differentials, where a Beltrami differential $\mu$ is harmonic if $\rho(z) \overline{\mu}$ is holomorphic,
or $\mu = \rho^{-1}  \overline{\phi}$ for some holomorphic quadratic differential $\phi$ on ${\bf D}$. 
This correspondence can be better understood by looking at
\[
\rho \, dz d \overline{z}  \overline{ \Big[\mu \frac{d\overline{z}}{dz}\Big]}  = \rho \overline{\mu} \,d z^2.
\]
  
Ahlfors  \cite{Ah1} introduced a bounded linear operator $P: B \rightarrow {\cal B}$ 
given by 
\[
P[\mu] = \frac{-3(z - \overline{z})^2}{\pi} \int_{\h} \frac{\mu(\eta)}{(\eta - \overline{z})^4} d \sigma (\eta)   
\]     
where $d \sigma$ is the Euclidean area element, and $z$ and $\eta$ are the Euclidean coordinates for 
the upper half space, which is a model of  the hyperbolic plane $\h$.   This map is indeed a projection for
one checks that $P[\mu] = \mu$ when $\mu \in {\cal B}$.   The kernel of the projection map 
is denoted by $N$, and it is known that the space $B/N$ is identified with the tangent space 
$T_{G_0} {\cal UT}$.  Now recall our $L^2$-decomposition theorem says that an $L^2$-integrable 
tensor $h$ tangential to the space of hyperbolic 
metrics ${\cal M}_{-1}$ can be decomposed as
\[
h  =  P_{G_0}(h) + L_X G_0 
\]
while 
\[
h = L_Z G_0
\]
for some vector field $Z$.  Each of the three Lie derivatives $L_Z G_0, L_X G_0$ and $L_{Z-X} G_0$ 
is identified uniquely to Beltrami differentials $\mu$, $\nu$ and $\mu - \nu$ respectively, via the $\overline{\partial}$ -equations 
\[
Z_{\overline{z}} = \mu, \,\,\,  X_{\overline{z}}= \nu  \mbox{ and } (Z-X)_{\overline{z}} = \mu -\nu. 
\]
 Now the correspondence 
between the two representations of the tangent space is given by $P_{G_0} [\mu] = \mu - \nu$
and $\nu$ is an element of the kernel $N$ of the projection operator $P_{G_0}: B \rightarrow {\cal B}$. 

\subsubsection{\weil complex structure\index{\weil complex structure}} 
We explain here that the paper of Nag-Verjovsky \cite{NV} identifies vector fields on $S^1$ with  tangent vectors of the universal \teich space at the identity, which is a natural thing to do as the universal \teich space is defined as 
\[
{\cal UT} = {\rm QS}(S^1) / {\rm SL}(2, {\mathbb R}),  
\] 
the space of quasi-symmetric self-maps of $S^1$ fixing three points (say, $(1,0), (0,1)$ and $(-1, 0)$ for 
example) on the circle. Each tangent vector $\Theta$ is obtained by linearizing the solution to the Beltrami
equation  near the identity as shown above restricted the resulting vector field defined on the 
unit disc  ${\bf D}$ onto the unit circle $S^1 = \partial {\bf D}$;
namely $\Theta = w_{\e \mu_0} (z)$ with $|z|=1$ where 
\[
w_{\e \mu_0} (z) = z + \e \dot{w}[\mu_0](z) + o(\e) \,\,\, \e \rightarrow 0.
\]
When the universal \teich space ${\cal UT}$ is regarded as the space of Beltrami differentials
$L^\infty({\bf D})_1/ \sim$, there is a natural complex structure $J : \mu \mapsto i \mu$ on the tangent space of  ${\cal UT}$ at the identity.
This induces a complex structure $\tilde{J}$ on the other representation of the universal \teich space as
one can linearize the one-parameter family of quasi-conformal diffeomorphisms $w_{\e i \mu_0}$ at
the origin $\e =0$ and restrict the resulting vector field to the unit circle.    Nag-Verjovsky \cite{NV} showed that
this $\tilde{J}$ is the Hilbert transform\index{Hilbert transofrm}, a statement attributed to S.Kerckhoff: \\

\noindent {\bf Theorem}  Using $\theta$ as  coordinate on $S^1$, and $z = e^{i \theta}$, define $\Theta = u
(\theta) (\del/\del \theta)$, where $\dot{w}[\mu_0](e^{i \theta}) = i z u(\theta)$, namely $u(\theta)$  is the 
magnitude of the vector field $\Theta$ at the point $z = e^{i \theta}$.  Then $\tilde{J} \Theta = u^* (\theta)$
where   $\dot{w}[i \mu_0](e^{i \theta}) = i z u^*(\theta)$, where $u^*(\theta)$ is given by
\[
u^* (z) = {\rm Im}(D(z)) + (cz + \overline{c} \overline{z} + b) 
\]
on $\{ |z| = 1\}$ for a certain $b \in {\mathbb R}$ and $c \in {\mathbb C}$, and $D(z)$ is an element of the disc algebra 
$A({\bf D})$ (namely functions holomorphic in ${\bf D}$ and continuous on $\overline{{\bf D}}$ such that
${\rm Re} D(z) = u(z)$ on $z \in S^1$.)  

Note that the statement is for the universal \teich space, but one can restrict to the \teich space 
of a Fuchsian group, as the $\Gamma$-equivariance can be incorporated into the proof of this theorem. 
   
\subsubsection{${\rm Diff} S^1/{\rm SL}(2, {\mathbb R})$ as a Hilbert manifold\index{Hilbert manifold}}
  
One of the merits in looking at the $L^2$-integrable deformation tensors of the Poincar\'{e} metric $G_0$ is that it provides a Hilbert space which acts as the tangent space equipped with the \weil pairing.
In the paper \cite{NV}, the Lie algebra of ${\rm Diff} S^1$ is identified 
as the algebra of $C^\infty$-smooth vector fields on $S^1$.  The complexification of the Lie Algebra is the Virasoro algebra generated by 
\[
L_n = e^{i n \theta} \frac{\del}{\del \theta} = i z^{n+1} \frac{\del}{\del z},  \,\,\, n \in {\bf Z}, z = e^{i \theta}.
\]
A tangent vector to the  homogeneous space ${\rm Diff} S^1 / {\rm SL}(2, {\mathbb R})$ at the identity $[{\rm Id}]$
is of the form 
\[
\Theta = \sum_{m \neq -1, 0, 1} v_m L_m, \,\,\, \overline{v_m} = v_{-m}.
\]
Note that the omission of $m = -1, 0, 1$ is due to the fact that $L_{-1}, L_0$ and $L_1$ span the subspace 
that is the complexification of ${\it sl} (2, {\mathbb R})$ within the compliexified Lie algebra of ${\rm Diff} S^1$.
As the diffeormorphisms are $C^\infty$-smooth, each vector field $\Theta = u(\theta) \del / \del \theta$  can 
be identified with a  $2 \pi$-periodic $C^\infty$ real-valued function $u(\theta)$.    
 
There is a natural complex structure $\tilde{J}$ at the identity, which is  conjugation;
\[
\tilde{J} \Theta = \sum_{m \neq -1, 0, 1} - i \, {\rm sgn}(m) v_m L_m \,\,\,
\]
Now $\tilde{J}$ is an almost complex structure by definition, but it is also known that it is indeed integrable, and that the 
right multiplications/translations by the elements of ${\rm Diff} S^1$ are biholomorphic automorphisms of 
the homogeneous space ${\rm Diff} S^1/{\rm SL}(2, {\mathbb R})$ \cite{NV}.   
 
As each smooth diffeomorphism of $S^1$ extends to a smooth diffeomorphism of the closed disc $S^1 
\cup D$, which in turn is  quasi-comformal, ${\rm Diff} S^1$ is a subset of the space ${\rm QS} (S^1)$ of 
quasi-symmetric maps.   Thus one can think of the homogeneous space 
${\rm Diff} S^1 / {\rm SL}(2, {\mathbb R})$ as a subset of the universal \teich space ${\cal UT} = {\rm QS}(S^1) / {\rm SL}(2, {\mathbb R})$.  

We introduce the following theorem by Nag-Verjovsky \cite{NV}.\\

\noindent {\bf Theorem} {\it The natural inclusion ${\rm Diff} S^1 / {\rm SL}(2, {\mathbb R}) 
\hookrightarrow {\cal UT}$ is holomorphic. }
\\

The proof of this statement follows, using the homogeneous structures of both ${\rm Diff} S^1 / {\rm SL}(2, {\mathbb R})$ and ${\cal UT}$,  from checking that the complex structure $\tilde{J}$ and $J$ coincide at the identity
of ${\cal UT}$.  We omit the details and refer the reader to the paper \cite{NV}.
In short, the conjugation $\tilde{J}$ of $\Theta = u(\theta) \del / \del \theta$ can be shown to  coincide with the Hilbert transform 
$u^*(\theta)$ of $u(\theta)$ so that by setting $D(e^{i \theta} ) = u(\theta) + i u^*(\theta)$,   $D$ is identified 
with an element of the disc algebra $A({\bf D})$, as appeared in the above theorem. 

In this situation Nag and Verjovsky show that the only possible homogeneous K\"{a}hler form
$\omega$ on ${\rm Diff} S^1/{\rm SL} (2, {\mathbb R})$ given at the identity is 
\[
\omega (L_m, L_n) = a (m^3-m) \delta_{m, -n}, \,\,\,  m,n \in {\bf Z} \backslash \{\pm 1, 0\}
\]   
for $a$ a purely imaginary number.  This form had been previously known as the Kirillov-Kostant symplectic form \cite{TT}
on  ${\rm Diff} S^1/{\rm SL} (2, {\mathbb R})$.  

Having the complex structure $\tilde{J}$ and the K\"{a}hler form $\omega$  at hand, there is a natural K\"{a}hler metric on ${\rm Diff} S^1/{\rm SL} (2, {\mathbb R})$, specified as $g(v, w) = - \omega(v, \tilde{J} w)$. 
When the tangent vectors $\Theta_1$ and $\Theta_2$ at the identity of ${\rm Diff} S^1/{\rm SL} (2, {\mathbb R})$ are written as Fourier series $\sum v_m L_m$ and $\sum w_m L_m$ respectively, the metric has the form
\[
g(\Theta_1, \Theta_2) = -2ia {\rm Re} \Big[ \sum_{m=2}^\infty v_m \overline{w_m} (m^3 - m) \Big].
\]     
This series converges absolutely when the vector fields $\Theta_i$ are elements of the Sobolev space
$H^{3/2}(S^1)$, when these vector fields $\Theta_i$ are identified with  $u_i(\theta) \del/\del \theta$. To keep the metric positive definite, we need $a = ib$ for $b >0$.  

Nag and Verjovsky  proceed to show that this K\"{a}hler metric is indeed the \weil metric defined on the $L^2(G_0)$-integrable tensors in $T_{G_0} {\cal UT}$.  The proof is by finding an explicit correspondence between 
an $L^2(G_0)$-integrable Beltrami differential $\mu$ and the vector field $\dot{w}[\mu](e^{i \theta})=: V(e^{i \theta})$ on $S^1$, by solving the $\overline{\del}$-equation $V_{\overline{z}} = \mu$ using the integral kernel.   

This series of results can be summarized by the fact  that  the Hilbert manifold structure imposed on the homogeneous space ${\rm Diff} S^1/{\rm SL}(2, {\mathbb R})$ is affiliated to the space of $H^{3/2}$-smooth vector fields on $S^1$. In exponentiating those vector fields with respect to the \weil pairing, 
we expect to understand the global structure of the homogeneous space.  We will come back to this issue
in Section 4.6.

\subsection{\weil geodesic equation}

Having the linear structure of the tangent space $T_G {\cal M}$ at each hyperbolic metric $G$, and the covariant derivative $D$ on ${\cal M}$, we proceed to write down the \weil geodesic equation\index{\weil Geodesic Equation}.  

First note that the space ${\cal M}$ of smooth metrics on the surface $\Sigma$ contains the space ${\cal M}_{-1}$ of 
hyperbolic metrics as a smooth submanifold, as the function $K$ which assign each metric its sectional curvature:
\[
K : {\cal M} \rightarrow C^\infty (\Sigma)
\]
has the constant function $-1$ as a regular value.
This follows from a standard argument (see \cite{FM, Be}), namely the linearized operator, which is
the Lichnerowicz operator ${\cal L}_G$
\[
DK(G): T_G {\cal M} \rightarrow C^\infty (\Sigma)
\] 
is surjective, as we have already seen in the proof of $L^2$-decomposition theorems.

A consequence of the $L^2$-decomposition is that we can see that the quotient map
${\cal Q} :{\cal M}_{-1} \rightarrow {\cal M}_{-1} / {\rm Diff}_0 \Sigma$ is a Riemannian 
submersion, a point of view initiated by Earle-Eells \cite{EE} in the 1960's, as the linearization of the map 
\[
D {\cal Q}(G): T_G {\cal M}_{-1} \rightarrow T_G  \T
\] 
sends $P_G(h) + L_X G$ to $P_G (h)$ where $L_X G$ is perpendicular to the tangent space
$T_G \T$.  Namely the tangent space $T_G {\cal M}_{-1}$ is split into the horizontal space $T_G \T$
and the vertical space $T_G {\rm Diff}_0 \Sigma$, and the latter space is the kernel of the linear map
$D {\cal Q}(G)$.  

A standard result on Riemannain submersions then tells us that given a \weil geodesic $\sigma: [0, T] \rightarrow \T$ and a hyperbolic metric $G_0$ with ${\cal Q}(G_0) = \sigma (0)$,  there exists a unique 
path $G_t$ in ${\cal M}_{-1}$, itself a geodesic in ${\cal M}_{-1}$ with ${\cal Q}(G_t) = \sigma(t)$ for $t \in [0, T]$, and its $L^2$-metric length is equal to the \weil length of $\sigma$.  The path $G_t$ is called
the horizontal lift of $\sigma$ with  initial point $G_0$.

In what follows, we will identify each \weil geodesic with its horizontal lift with a suitable initial metric.  
 
Let $\Pi_G$ be the projection map 
\[
\Pi_G : T_G {\cal M} \rightarrow (T_G {\cal M}_{-1})^{\perp}
\]
defined by $\Pi_G (h) = {\cal L}_G f$ where $f$ satisfies ${\cal L}_G {\cal L}_G^* f = {\cal L}_G h $. 

For (a horizontal lift of) a \weil geodesic $\{G_t\} \subset {\cal M}_{-1} \subset {\cal M}$, the tangent vector $\dot{G}_t := \frac{d}{d\tau} G_\tau |_{\tau = t}$ is an element of  $T_{G_t} {\cal M}_{-1}$ at each time $t$, namely 
\[
\Pi_{G_t} (\dot{G}_t) = 0 .
\] 
Furthermore,  as $G_t$ is a horizontal lift of a \weil geodesic, we have 
$\dot{G}_t = P_{G_t} (\dot{G}_t)$.  

Since $G_t$ is a geodesic in ${\cal M}_{-1}$ where the space ${\cal M}_{-1}$ is a Riemannian
manifold with its metric being the $L^2$-pairing, the geodesic curvature vector of the curve $G_t$ vanishes.

Recall that we have specified the Levi-Civita connection $D$ of the $L^2$-pairing defined on the tangent 
bundle $T {\cal M}$ in the above section.  The connection $D$ then induces the Levi-Civita 
connection $\nabla$ on its submanifold ${\cal M}_{-1}$ by the Gauss formula
\[
\nabla_X Y = (D_X Y)^{T{\cal M}_{-1}},
\]
where the right hand side is the tangential component of $D_X Y$ to $T_G {\cal M}_{-1}$. Hence
the fact that $G_t$ is a geodesic in ${\cal M}_{-1}$, is equivalent to 
\[
\nabla_{\dot{G}_t} \dot{G}_t = 0
\]
which in turn is equivalent to
\[
D_{\dot{G}_t} \dot{G}_t = \Pi_{G_t}  (D_{\dot{G}_t} \dot{G}_t )
\]
as $(\Pi_{G_t}  (D_{\dot{G}_t} \dot{G}_t ))^{T{\cal M}_{-1}}=0$ by definition.
This last equation is the \weil geodesic equation in the context of the $L^2$-geometry of the space of metrics $\cal M$. 

\begin{theorem}
Given a horizontal lift $G_t$ of a \weil geodesic, we have the following expression for the second $t$-derivative of $G_t$ in $T_G {\cal M}$;
\[
\frac{d^2}{dt^2} G_t \Big|_{t=0} = \Big( \frac14 \| \dot{G}_0 \|^2 + \alpha \Big) G_0 + L_Z G_0
\]
where $\alpha = - \frac12 (\triangle_{G_0} -2 )^{-1} \|\dot{G}_0\|^2$ which is nonnegative, and $Z$ is a vector field on $\Sigma$.
\end{theorem}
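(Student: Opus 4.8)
The plan is to compute $D_{\dot G_t}\dot G_t$ using the explicit Levi-Civita connection formula from Lemma 3.3, then apply the projection identity $D_{\dot G_0}\dot G_0 = \Pi_{G_0}(D_{\dot G_0}\dot G_0)$ which characterizes the horizontal lift of a \weil geodesic. Write $h := \dot G_0 \in T_{G_0}\T$, so $h$ is trace-free and divergence-free. Since the connection formula applies to trace-free tensors and $\tr_{G_0} h = 0$, Lemma 3.3 collapses to
\[
D_h h = -\, h \cdot G_0^{-1} \cdot h - \tfrac14 \langle h, h\rangle_{G_0(x)}\, G_0 .
\]
First I would simplify $h\cdot G_0^{-1}\cdot h$ at the center of a geodesic normal coordinate chart, where $G_0 = \mathrm{Id}$ and $h$ has the matrix form $\bigl(\begin{smallmatrix} h_{11} & h_{12}\\ h_{12} & -h_{11}\end{smallmatrix}\bigr)$ recorded just before Definition 3.5. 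Squaring this matrix gives $(h_{11}^2 + h_{12}^2)\,\mathrm{Id}$, and since the \weil integrand was computed there to be $2(h_{11}^2 + h_{12}^2) = \langle h,h\rangle_{G_0(x)}$, we get $h\cdot G_0^{-1}\cdot h = \tfrac12 \langle h,h\rangle_{G_0(x)}\, G_0$ pointwise. Substituting,
\[
D_h h = -\tfrac12 \langle h,h\rangle_{G_0(x)}\, G_0 - \tfrac14 \langle h,h\rangle_{G_0(x)}\, G_0 = -\tfrac34 \,\langle h,h\rangle_{G_0(x)}\, G_0 .
\]
So $D_{\dot G_0}\dot G_0$ is a pointwise multiple of $G_0$, i.e.\ a pure conformal deformation $\phi\, G_0$ with $\phi = -\tfrac34 \langle h,h\rangle_{G_0(x)}$.

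Next I would apply the projection $\Pi_{G_0}$ to this conformal tensor. By definition $\Pi_{G_0}(\phi G_0) = {\cal L}_{G_0}^* f$ where ${\cal L}_{G_0}{\cal L}_{G_0}^* f = {\cal L}_{G_0}(\phi G_0)$. Using ${\cal L}_{G_0}h = -(\triangle_{G_0}+K)\tr_{G_0}h + \delta_{G_0}\delta_{G_0}h$ with $K=-1$ applied to $h = \phi G_0$ (for which $\tr_{G_0}(\phi G_0) = 2\phi$ and $\delta_{G_0}\delta_{G_0}(\phi G_0) = \triangle_{G_0}\phi$ up to the standard sign), one computes ${\cal L}_{G_0}(\phi G_0)$ explicitly as a second-order expression in $\phi$; this is a routine but sign-sensitive calculation I would do carefully. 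The geodesic equation then forces
\[
D_{\dot G_0}\dot G_0 = \phi\, G_0 = P_{G_0}(\phi G_0) + L_X G_0 + {\cal L}_{G_0}^* f,
\]
but since the left side equals its own $\Pi_{G_0}$-projection, the $P_{G_0}$ and $L_X G_0$ parts must reorganize; more precisely, the relation $\tfrac{d^2}{dt^2}G_t|_{t=0} = D_{\dot G_0}\dot G_0$ holds only after we account for the submersion correction — the horizontal lift satisfies $\ddot G_t = D_{\dot G_t}\dot G_t + (\text{vertical term})$, and the vertical term is precisely an $L_Z G_0$. Writing ${\cal L}_{G_0}^* f = (-\triangle_{G_0} f - K f)G_0 + \mathrm{Hess}_{G_0} f = (-\triangle_{G_0}f + f)G_0 + \mathrm{Hess}_{G_0}f$, and solving $(-\triangle_{G_0}+2)\cdot(\text{scalar part}) = $ the scalar part of ${\cal L}_{G_0}(\phi G_0)$, I expect the scalar multiplier of $G_0$ to split as $\tfrac14\|\dot G_0\|^2 + \alpha$ with $\alpha = -\tfrac12(\triangle_{G_0}-2)^{-1}\|\dot G_0\|^2$ (here $\|\dot G_0\|^2$ denotes the pointwise \weil integrand $\langle h,h\rangle_{G_0(x)}$, not the global norm), the $\mathrm{Hess}_{G_0}f$ contribution being absorbable; nonnegativity of $\alpha$ follows because $(\triangle_{G_0}-2)^{-1}$ is a negative operator on the closed (or cofinite) hyperbolic surface, so $-\tfrac12(\triangle_{G_0}-2)^{-1}$ applied to the nonnegative function $\|\dot G_0\|^2$ is nonnegative.

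The main obstacle, and the step I would spend the most care on, is the bookkeeping of the projection: tracking exactly which scalar-times-$G_0$ piece lands in $T_G\T$ (it cannot, being conformal, unless it vanishes), which lands in ${\cal L}_{G_0}^*f$, and which lands in $L_Z G_0$ coming from the Riemannian-submersion vertical correction — together with getting every sign in ${\cal L}_{G_0}$, ${\cal L}_{G_0}^*$, $\triangle_{G_0}$, and the curvature normalization $K=-1$ consistent. Once the identity $D_h h = -\tfrac34\langle h,h\rangle_{G_0(x)}G_0$ is in hand, the remaining work is to invert the elliptic operator $-\triangle_{G_0}+2$ (equivalently $\triangle_{G_0}-2$ up to sign) to isolate $\alpha$, and to note the Hessian term is a Lie-derivative-type tensor modulo lower order, hence folds into $L_Z G_0$. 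I would conclude by verifying $\alpha \ge 0$ via the maximum principle / spectral positivity of $2-\triangle_{G_0}$ on the hyperbolic surface, which completes the claimed formula.
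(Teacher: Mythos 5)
Your opening computation is exactly the paper's: using the connection formula for trace-free tensors and the matrix identity $\dot G_0\cdot G_0^{-1}\cdot\dot G_0=\tfrac12\|\dot G_0\|^2 G_0$ at the center of a normal chart, you correctly obtain $D_{\dot G_0}\dot G_0=-\tfrac34\|\dot G_0\|^2 G_0$, a purely conformal tensor. But from that point on there is a genuine gap: you try to extract the scalar coefficient $\tfrac14\|\dot G_0\|^2+\alpha$ by applying $\Pi_{G_0}$ to this conformal tensor and inverting $\triangle_{G_0}-2$. That cannot work. In the correct bookkeeping, $D_{\dot G_0}\dot G_0$ enters the formula for $\ddot G_0$ only through its component tangent to ${\cal M}_{-1}$, and since a conformal tensor is pointwise orthogonal to the trace-free directions, that component is a pure Lie derivative $L_XG_0$ --- it contributes nothing to the scalar multiple of $G_0$. (If you do carry out the projection you propose, ${\cal L}_{G_0}(\phi G_0)=-(\triangle_{G_0}-2)\phi$ combined with ${\cal L}{\cal L}^*f=(\triangle-2)(\triangle-1)f$ gives scalar part exactly $\phi=-\tfrac34\|\dot G_0\|^2$ plus a Hessian; this has the wrong value and the wrong sign.) Your intermediate identity ``$\ddot G_t=D_{\dot G_t}\dot G_t+(\text{vertical term})$'' is also not the relation the argument rests on: the covariant acceleration is $\ddot G_t+D_{\dot G_t}\dot G_t$ (the second summand being the Christoffel correction), and the geodesic equation says this sum equals its own $\Pi_{G_t}$-component.

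The missing idea is that the scalar term $\bigl(\tfrac14\|\dot G_0\|^2+\alpha\bigr)G_0$ comes from a completely different source: differentiating the tangency constraint $\Pi_{G_t}(\dot G_t)=0$ in $t$, which yields $\Pi_{G_0}\ddot G_0=-\tfrac{d}{dt}\Pi_{G_t}\dot G_0\big|_{t=0}$. Evaluating that derivative requires the variation of the Lichnerowicz operator itself as the metric moves along the geodesic, namely
\[
\frac{d}{dt}\,{\cal L}_{G_t}\dot G_0\Big|_{t=0}=\Bigl(\tfrac14\triangle_{G_0}-1\Bigr)\|\dot G_0\|^2 ,
\]
which in turn rests on the two computations $\tfrac{d}{dt}\tr_{G_t}\dot G_0\big|_{t=0}=-\|\dot G_0\|^2$ and $\tfrac{d}{dt}\delta_{G_t}\dot G_0\big|_{t=0}=-\tfrac34\nabla\|\dot G_0\|^2$. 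Only after inverting $\triangle_{G_0}-2$ against $(\tfrac14\triangle_{G_0}-1)\|\dot G_0\|^2=\tfrac14(\triangle_{G_0}-2)\|\dot G_0\|^2-\tfrac12\|\dot G_0\|^2$ do the constants $\tfrac14$ and $\alpha=-\tfrac12(\triangle_{G_0}-2)^{-1}\|\dot G_0\|^2$ appear. None of this is in your outline, so the proposal as written would terminate at a conformal factor $-\tfrac34\|\dot G_0\|^2$ that is negative --- incompatible with the convexity the theorem is designed to deliver. Your maximum-principle argument for $\alpha\ge 0$ is fine and matches the paper's lemma, but it is applied to a quantity your argument never actually produces.
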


\noindent {\bf Remark} Tromba \cite{Tr2} has a similar calculation (Theorem 2.1) to obtain an expression of the
second derivative of a horizontal lift of a \weil geodesic, 
\[
\frac{d^2}{dt^2} G_t \Big|_{t=0} = \frac12 \| \dot{G}_0 \|^2 G_0 + L_W G_0
\]
with $\tr_{G_0} (L_W G_0) = 0$ which differs from the one above.  The calculation is seemingly based on the assumption that the space ${\cal M}$ with respect to the $L^2$-metric is a linear space so that the geodesic curvature vector of an arc-length parameterized path $G_t$ is $\ddot{G}_t$.  Thus the \weil geodesic equation (Eqn.(2.1))  in \cite{Tr2} is $\ddot{G}_t = \Pi_{G_t} (\ddot{G}_t) $ instead of our $D_{\dot{G}_t} \dot{G}_t = \Pi_{G_t}  (D_{\dot{G}_t} \dot{G}_t )$.  We also note here that it was claimed in the proof of the same theorem  (Eqn.(2.3)) that the trace-free part of a symmetric $(0,2)$-tensor which is divergence-free is again divergence-free, which does not hold  in general.  
 
\begin{proof}
Recall that the geodesic curvature vector $\kappa$ of an arc-length parameterized cu $u(t)$ has the expression
\[
\kappa (t) = [\nabla_{\dot{u}} \dot{u}](t) = \ddot{u}^\alpha(t) + \Gamma^\alpha_{\beta \gamma} \dot{u}^\beta(t) \dot{u}^\gamma(t). 
\]
In our setting, this is equivalent to 
\[
\kappa(t) = [D_{\dot{G_t}} \dot{G_t}](t) = \ddot G_t + D_{\dot{G_t}} \dot{G_t},
\]
where $D$ is the Levi-Civita connection for the $L^2$-metric defined on $\cal M$.  

For a horizontal lift $G_t$ of a \weil geodesic $\sigma(t)$, the velocity vector is tangential to the \teich space,
\[
\Pi_{G_t} (\dot{G}_t) = 0,
\]
 while the geodesic curvature vector has no tangential component to the \teich space,
\[
D_{\dot{G_t}} \dot{G_t} =  \Pi_{G_t}  (D_{\dot{G}_t} \dot{G}_t ).
\]
Hence we have the  expression 
\[
 \ddot G_0 + D_{\dot{G_0}} \dot{G_0} = \Pi_{G_0} [\ddot G_0] +  \Pi_{G_0} [D_{\dot{G_0}} \dot{G_0}] 
\]
which is reorganized as 
\[
 \ddot G_0 = \Pi_{G_0} [\ddot G_0] - [D_{\dot{G_0}} \dot{G_0}]^{T_{G_0}{\cal M}_{-1}} 
\]
as $D_{\dot{G_0}} \dot{G_0} - \Pi_{G_0} [D_{\dot{G_0}} \dot{G_0}] $ constitutes the tangential component
to ${\cal M}_{-1}$ in the $L^2$-decomposition of $T_{G_0} {\cal M}_{-1}$.   
On the other hand, differentiating the tangential condition $\Pi_{G_t} (\dot{G}_t) = 0$ in $t$ yields 
\[
\frac{d}{dt} \Pi_{G_t}  \dot{G}_t \Big|_{t=0} = \frac{d}{dt} \Pi_{G_t}  \dot{G}_0 \Big|_{t=0} + \Pi_{G_0} \ddot{G}_0 = 0.
\]
Combining these, we have an expression for $\ddot{G}_0$;
\[
\ddot{G}_0 = - \frac{d}{dt} \Pi_{G_t}  \dot{G}_0 \Big|_{t=0}   - [D_{\dot{G_0}} \dot{G_0}]^{T_{G_0}{\cal M}_{-1}}. 
\]
The term $[D_{\dot{G_0}} \dot{G_0}]^{T_{G_0}{\cal M}_{-1}}$ can be computed further by using the 
explicit expression for the Levi-Civita connection as follows. Recall the formula
\[
D_{h_1} h_2 = -\frac12 \Big[ h_1 \cdot  G^{-1} \cdot h_2  +  h_2 \cdot G^{-1} \cdot h_1 \Big] + \frac14 \Big[ (\tr_G h_1) h_2 +  (\tr_G h_2) h_1 -  \langle h_1, h_2 \rangle_{G(x)} G \Big],
\]
which becomes
\[
D_{h_1} h_2 = -\frac12 h_1 \cdot  G^{-1} \cdot h_2  -\frac12 h_2 \cdot G^{-1} \cdot h_1  - \frac14  \langle h_1, h_2 \rangle_{G(x)} G 
\]
for the  trace-free symmetric $(0,2)$ tensors $h_1$ and $h_2$.  When $G=G_0$ and $h_1=h_2=\dot{G}_0$, we have
\[
D_{\dot{G_0}} \dot{G_0} = - \dot{G}_0 \cdot G_0^{-1} \cdot \dot{G}_0 -\frac14 \|\dot{G}_0\|^2_{G_0(x)} G_0
\]
which by using the geodesic normal coordinates so that $G_0 = \delta_{ij}$ and $(\dot{G}_0)_{11}=-(\dot{G}_0)_{22}$,  the matrix multiplication gives 
\[
- \dot{G}_0 \cdot G_0^{-1} \cdot \dot{G}_0 = -\left(
  \begin{array}{ c c }
     (\dot{G}_0)_{11}^2 +  (\dot{G}_0)_{12}^2 & 0 \\
     0 & (\dot{G}_0)_{11}^2 +  (\dot{G}_0)_{12}^2
  \end{array} \right) = - \frac12 \|\dot{G}_0\|^2 G_0.
\]
We recall that the explicit expression for the  Levi-Civita connection is valid only for the locally constant symmetric $(0,2)$ tensors.  Here the tensor $\dot{G}_0$ is treated as such, as the quantities in the calculation are tensorial.  

This says, in the light of the $L^2$-decomposition theorem, that the deformation tensor $D_{\dot{G_0}} \dot{G_0}$ is purely conformal, hence point-wise (and thus $L^2$) orthogonal to the trace-free tensors, which in turn implies that the tensor has no tangential component to the \teich space.  By taking the projection of  $D_{\dot{G_0}} \dot{G_0}$ to the tangent space $T_{G_0}{\cal M}_{-1}$, the resulting tensor is along the diffeomorphism fiber, hence $[D_{\dot{G_0}} \dot{G_0}]^{T_{G_0}{\cal M}_{-1}} = L_X G_0$ for some 
smooth vector field $X$.  Hence we have so far established
\[
\ddot{G}_0 = - \frac{d}{dt} \Pi_{G_t}  \dot{G}_0 \Big|_{t=0}   - L_X G_0. 
\]
We proceed to calculate the term $ \frac{d}{dt} \Pi_{G_t}  \dot{G}_0 \Big|_{t=0} $. 

First of all as a consequence of the $L^2$-decomposition theorems, we have the following formula for the third 
component of the decomposition, which is in the orthogonal directions to the space of constant curvature 
metrics. 

\begin{proposition}
In the $L^2$-decomposition theorems, where an arbitrary smooth tensor $h$ has the following decomposition,
\[
h= P_G(h) + L_X G + {\cal L}_G^* f,
\]
where $X$ and $f$ are the unique solutions of the equation $\delta_G \delta_G^* X = -\delta_G h$ and 
${\cal L}_G {\cal L}_G^* f = {\cal L}_G h$ respectively,  we have
\[
(\triangle_G -1)f = (\triangle_G -2)^{-1} {\cal L}_G h.
\] 
\end{proposition}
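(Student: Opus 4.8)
The plan is to compute the composite operator $\mathcal{L}_G \mathcal{L}_G^*$ explicitly as a polynomial in the Laplacian and then invert one factor. We start from the two formulas already recorded in the excerpt, specialized to $K \equiv -1$: the Lichnerowicz operator $\mathcal{L}_G h = -(\triangle_G + K)\tr_G h + \delta_G \delta_G h$ and its adjoint $\mathcal{L}_G^* f = (-\triangle_G f - Kf)\, G + \mathrm{Hess}_G\, f$.

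First I would evaluate the two building blocks of $\mathcal{L}_G$ applied to a tensor of the form $\mathcal{L}_G^* f$. Taking the trace and using that in dimension two $\tr_G G = 2$ and $\tr_G \mathrm{Hess}_G\, f = \triangle_G f$ (the sign convention under which $-\triangle_G \ge 0$, as used throughout the decomposition proof), one gets $\tr_G(\mathcal{L}_G^* f) = -\triangle_G f + 2f$ — exactly the expression that appeared when the kernel of $\mathcal{L}_G \mathcal{L}_G^*$ was shown to be trivial. For the second building block, I invoke the Lemma already proved above, $\delta_G \mathcal{L}_G^* f = 0$, which immediately forces $\delta_G \delta_G \mathcal{L}_G^* f = 0$. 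Hence the $\delta_G\delta_G$-part of $\mathcal{L}_G$ contributes nothing and
\[
\mathcal{L}_G \mathcal{L}_G^* f = -(\triangle_G - 1)\bigl(-\triangle_G f + 2f\bigr) = (\triangle_G - 1)(\triangle_G - 2)\, f,
\]
the two factors commuting since both are polynomials in $\triangle_G$.

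Finally I would substitute the defining equation $\mathcal{L}_G \mathcal{L}_G^* f = \mathcal{L}_G h$ to obtain $(\triangle_G - 1)(\triangle_G - 2)f = \mathcal{L}_G h$, and then apply $(\triangle_G - 2)^{-1}$ to both sides, which is legitimate because $\triangle_G - 2$ is an elliptic operator with trivial kernel: on the closed surface this follows from integration by parts, and on $\mathbb{H}^2$ from the spectral gap (the bottom of the $L^2$-spectrum of $-\triangle_{G_0}$ is $1/4$), equivalently from the coercivity estimate established in the proof of the $L^2$-decomposition theorem. Commuting the factors through, this yields $(\triangle_G - 1)f = (\triangle_G - 2)^{-1} \mathcal{L}_G h$, which is the assertion. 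There is no real obstacle here — the argument is bookkeeping with the sign conventions together with the already-available identity $\delta_G \mathcal{L}_G^* f = 0$; the only point deserving an explicit word is the invertibility of $\triangle_G - 2$, handled as above.
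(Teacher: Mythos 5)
Your argument is correct and is essentially the paper's own proof: both compute $\tr_G(\mathcal{L}_G^* f) = -(\triangle_G-2)f$, kill the $\delta_G\delta_G\mathcal{L}_G^*f$ term (you via the lemma $\delta_G\mathcal{L}_G^*f=0$, the paper by writing it out and using the same Ricci-identity cancellation), arrive at $\mathcal{L}_G\mathcal{L}_G^*f=(\triangle_G-2)(\triangle_G-1)f=\mathcal{L}_G h$, and invert $\triangle_G-2$. Your explicit justification of the invertibility of $\triangle_G-2$ is a small but welcome addition over the paper's one-line assertion.
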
  

\begin{proof}
Note the following equalites
\begin{eqnarray*}
{\cal L}{\cal L}^* f & = & -( \triangle -1) \tr_G ({\cal L}^* f) + \delta_G \delta_G {\cal L}^* f \\
 & = &  -( \triangle -1) \{-2(\triangle -1)f + \triangle f \}+ \{-(\triangle -1)f \delta_{ij} + f_{ij} \}_{;ij} \\
 & = &   (\triangle -2)(\triangle -1) f  \\
 & = & {\cal L} h, 
\end{eqnarray*}
where the last inequality follows from ${\cal L}{\cal L}^* f  = {\cal L} h$ As the differential operator $\triangle -2$ is invertible, we obtain the statement.
\end{proof}

Having this statement at hand, we move on to write down
the projection operator $\Pi_G: h \mapsto {\cal L}^*_G h$  as 
\begin{eqnarray*}
\Pi_{G_t} \dot{G_0} &=& \{(\triangle_{G_t} -1)f_t \} G_t + {\rm Hess}_{G_t} f_t \\
 &=& - \{(\triangle_{G_t} -2)^{-1}{\cal L}_{G_t} \dot{G_0}\} G_t +  {\rm Hess}_{G_t} f_t 
\end{eqnarray*}
where $f_t$ is the solution of ${\cal L}_{G_t} {\cal L}_{G_t}^* f_t = {\cal L}_{G_t} \dot{G}_0$.  We have 
at $t=0$, ${\cal L}_{G_0} \dot{G}_0 = 0$ and thus $f_0 =0$.  Using these equalities, the time-derivative of $\Pi_{G_t} \dot{G_0}$ at $t=0$ can be written as
\[
\frac{d}{dt} \Pi_{G_t} \dot{G_0} = - (\triangle_{G_0} -2)^{-1}\Big(  \frac{d}{dt} {\cal L}_{G_t} \dot{G_0} \Big|_{t=0} \Big)G_0 + {\rm Hess}_{G_0}  \Big( \frac{d}{dt} f_t  \Big|_{t=0} \Big). 
\]  
Note that the Hessian term $ {\rm Hess}_{G_0}  \Big( \frac{d}{dt} f_t  \Big|_{t=0} \Big)$ is a Lie derivative
$L_{\nabla \dot{f}_0} G_0$.

We now calculate the term $  \frac{d}{dt} {\cal L}_{G_t} \dot{G_0} \Big|_{t=0}$.
\begin{eqnarray*}
  \frac{d}{dt} {\cal L}_{G_t} \dot{G_0} \Big|_{t=0} &=&   \frac{d}{dt} \Big( - (\triangle_{G_t} -1) \tr_{G_t}
 \dot{G}_0 + \delta_{G_t} \delta_{G_t} \dot{G}_0 \Big) \Big|_{t=0}  \\
 & = &  - (\triangle_{G_0} -1)  \frac{d}{dt} (\tr_{G_t}
 \dot{G}_0) \Big|_{t=0} + \delta_{G_0} \frac{d}{dt}  (\delta_{G_t} \dot{G}_0) \Big|_{t=0}  \\
 & = & -(\triangle_{G_0} -1) (- (\dot{G}_0)^{ij} (\dot{G}_0)_{ij}) +   \delta_{G_0} (- \frac34 \nabla \| \dot{G}_0\|^2) \\
 & = & (\triangle_{G_0} -1) \| \dot{G}_0\|^2 - \frac34 \triangle_{G_0} \| \dot{G}_0\|^2 \\
 & = & (\frac14 \triangle_{G_0} -1) \| \dot{G}_0 \|^2.
\end{eqnarray*}
In the third equality, the fact  $\frac{d}{dt}  (\delta_{G_t} \dot{G}_0) \Big|_{t=0}=- \frac34 \nabla \| \dot{G}_0\|^2$ was used.  This follows from the following calculation; as 
\[
(\delta_G h)_i = G^{jk} h_{ij;k} = G^{jk} (h_{ij,k}- h_{pj} \Gamma^p_{ik} - h_{ip} \Gamma^p_{jk} )
\]
where the semi-colon is used to denote the covariant derivative, we have
\begin{eqnarray*}
\frac{d}{dt}  (\delta_{G_t} \dot{G}_0) \Big|_{t=0} &=& - (\dot{G}_0)^{jk} (\dot{G}_0)_{ij,k} \\
 &  & - G_0^{jk}  (\dot{G}_0)_{pj} \frac{d}{dt} \Big( \frac12 (G_t)^{pq} \{(G_t)_{iq, k} + (G_t)_{kq, i} - (G_t)_{ik;q}\} \Big) \Big|_{t=0} \\
 & & - G_0^{jk} (\dot{G}_0)_{ip} \frac{d}{dt} \Big(\frac12 (G_t)^{pq} \{(G_t)_{jq, k} + (G_t)_{kq, j} - (G_t)_{jk;q}\} \Big)\Big|_{t=0} \\
 & = & - (\dot{G}_0)^{jk} (\dot{G}_0)_{ij, k} \\
  &  & - (G_0)^{jk} (\dot{G}_0)_{pj}  \Big(\frac12 (G_0)^{pq} \{(\dot{G}_0)_{iq, k} + (\dot{G}_0)_{kq, i} - (\dot{G}_0)_{ik;q}\} \Big)\\
 & & - (G_0)^{jk} (\dot{G}_0)_{ip}  \Big(\frac12 (G_0)^{pq} \{(\dot{G}_0)_{jq, k} + (\dot{G}_0)_{kq, j} - (\dot{G}_0)_{jk;q}\} \Big). 
\end{eqnarray*}
By using the fact that at $t=0$, $G_0$ can be expressed as $\delta_{ij}$ at each point, which in turn makes the traceless transverse deformation tensor $(\dot{G}_0)_{ij, k}$ fully symmetric in $i, j$ and $k$, we get a tensorial expression for our result;
\[
\frac{d}{dt}  (\delta_{G_t} \dot{G}_0) \Big|_{t=0}  = - \frac34 \|\dot{G}_0\|^2_{;i}.
\]  

By inserting  $\frac{d}{dt} {\cal L}_{G_t} \dot{G_0} \Big|_{t=0} =  (\frac14 \triangle_{G_0} -1) \| \dot{G}_0 \|^2$ 
we have 
\begin{eqnarray*}
\frac{d}{dt} \Pi_{G_t} \dot{G}_0  & = & - (\triangle_{G_0} -2)^{-1}\Big(  \frac{d}{dt} {\cal L}_{G_t} \dot{G_0} \Big|_{t=0} \Big)G_0 + L_{\nabla \dot{f}_0} G_0 \\
 & = & - \Big[ (\triangle_{G_0} -2)^{-1}  (\frac14 \triangle_{G_0} -1) \| \dot{G}_0 \|^2 \Big] G_0 + L_{\nabla \dot{f}_0} G_0 \\
 & = & - \Big[  (\triangle_{G_0} -2)^{-1}  \frac14 (\triangle_{G_0} -2-2) \|\dot{G}_0 \|^2 \Big]G_0 + L_{\nabla \dot{f}_0} G_0 \\
 & = & - \frac14 \|\dot{G}_0\|^2 G_0 + \Big[ \frac12 (\triangle_{G_0} -2)^{-1} \|\dot{G}_0\|^2 \Big]  G_0 + L_{\nabla \dot{f}_0} G_0
\end{eqnarray*}

\begin{lemma}
For a non-negative function $f$ on $\Sigma$, $(\triangle_{G_0} -2)^{-1} f$ is non-positive.
\end{lemma}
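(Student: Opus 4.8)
The plan is to argue via the maximum principle for the elliptic operator $\triangle_{G_0}-2$ on the closed surface $\Sigma$, using that this operator has trivial kernel (already noted in the excerpt) and is therefore invertible. Set $u := (\triangle_{G_0}-2)^{-1} f$, so that $u$ is the unique smooth solution of $\triangle_{G_0} u - 2u = f$ with $f \geq 0$. I want to show $u \leq 0$ everywhere. First I would locate a point $p \in \Sigma$ where $u$ attains its maximum; such a point exists since $\Sigma$ is compact and $u$ is smooth. At $p$ the Hessian of $u$ is negative semi-definite, hence $\triangle_{G_0} u(p) \leq 0$ (with the analyst's sign convention $\triangle = \operatorname{div}\operatorname{grad}$, which is the one in force here, since the excerpt writes $-\triangle_{G_0}+2$ as the operator with trivial kernel and positive spectrum). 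Therefore at $p$ we have $2u(p) = \triangle_{G_0} u(p) - f(p) \leq -f(p) \leq 0$, so $u(p) \leq 0$, and since $p$ is the maximum this gives $u \leq 0$ on all of $\Sigma$, which is the claim.

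Alternatively, and perhaps more in the spirit of the $L^2$-methods used throughout this section, one can argue spectrally: expand $f = \sum_k c_k \varphi_k$ in an orthonormal basis of eigenfunctions of $\triangle_{G_0}$ with $\triangle_{G_0}\varphi_k = -\lambda_k \varphi_k$, $\lambda_k \geq 0$. Then $(\triangle_{G_0}-2)^{-1} f = \sum_k \frac{-c_k}{\lambda_k + 2}\,\varphi_k$, and since $\lambda_k + 2 \geq 2 > 0$ for every $k$, the operator $-(\triangle_{G_0}-2)^{-1}$ is a positive operator; applying it to the non-negative function $f$ and pairing against any non-negative test function (or simply invoking that a bounded operator which is a positive-kernel/positivity-preserving resolvent maps the positive cone to the positive cone) yields $-(\triangle_{G_0}-2)^{-1}f \geq 0$. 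I would present the maximum-principle version as the main line, since it is self-contained and does not require justifying positivity preservation at the level of the resolvent kernel.

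The only genuine subtlety, and the step I would flag explicitly, is the sign convention: one must confirm that the Laplacian here is the negative-spectrum (geometer's) Laplacian, equivalently that $\triangle_{G_0}-2$ has spectrum bounded above by $-2$ and is invertible, which is exactly what was used in the proof of the $L^2$-decomposition theorem (there $-\triangle_G f + 2f = 0 \Rightarrow f \equiv 0$). Granting that, the argument is a one-line maximum-principle computation; no boundary terms arise because $\Sigma$ is closed, and smoothness of $u$ is guaranteed by elliptic regularity since $f$ is smooth. Hence there is no real obstacle beyond bookkeeping of the sign, and the lemma follows immediately.
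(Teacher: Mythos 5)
Your main argument is exactly the paper's proof: the author also sets $u=(\triangle_{G_0}-2)^{-1}f$, evaluates at a maximum point $p$ where $\triangle_{G_0}u(p)\leq 0$, and concludes $f(p)=\triangle_{G_0}u(p)-2u(p)<0$ if $u(p)>0$, a contradiction — your direct (non-contradiction) phrasing and your spectral alternative are only cosmetic variants. The proposal is correct and matches the paper's approach, including the sign convention $\triangle_{G_0}=\operatorname{div}\operatorname{grad}$ implicit in the earlier step $-\triangle_G f+2f=0\Rightarrow f\equiv 0$.
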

\begin{proof}
We will show that $u := (\triangle_{G_0} -2)^{-1} f  \leq 0$.  By supposing that $u$ attains its maximum 
at a point $p$ with $u(p) > 0$, we have $[\triangle_{G_0} u](p) \leq 0$. As $-2u(p) <  0$, $f(p) =  [(\triangle_{G_0} -2)u](p) < 0$, a contradiction to the hypothesis $f \geq 0$.
\end{proof}
By  setting $\alpha = - \frac12 (\triangle_{G_0} -2)^{-1} \|\dot{G}_0\|^2 \geq 0$ and $Y =  \nabla \dot{f}_0$, 
we have an expression for $\ddot{G}_0$;
\begin{eqnarray*}
\ddot{G}_0 & = & - \frac{d}{dt} \Pi_{G_t}  \dot{G}_0 \Big|_{t=0}   - L_X G_0 \\
 & = & - \Big[  (- \frac14 \|\dot{G}_0\|^2  - \alpha) G_0 + L_{Y} G_0\Big] -  L_X G_0 \\
  & = & ( \frac14 \|\dot{G}_0\|^2  + \alpha) G_0 + L_{Z} G_0
\end{eqnarray*}
where we denoted the vector field $-X-Y$ by $Z$.
\end{proof}

\section{Harmonic Map Parameterizations}
\subsection{General setting for harmonic maps}
We recall the theory of harmonic maps\index{harmonic maps}. Our treatment of the 
subject is by no means complete, and interested readers are referred
to several standard texts (for example \cite{Jo, EL2, SY}) available. 

Let $u:(M, g) \rightarrow (N, G)$ be a $C^1$-map. Let $du$ denote the section of the bundle 
$E := T^* M \otimes u^*(TN)$ for which there is the induced metric
$\langle X \otimes Y, W \otimes Z \rangle_E := g^*_x (X, W) G_{u(x)}(Y, Z)$. When $\{x^i\}$ (and $\{ y^\alpha \}$) is a local coordinate system near a point $p$ in $M$ (and a point $u(p)$ in $N$ respectively,) locally $du: TM \rightarrow TN$ is expressed as 
\[
du = \sum_{i, \alpha} \frac{\del u^\alpha}{\del x^i} dx^i \otimes \frac{\del}{\del y^\alpha}.
\]
Then one can define the energy of the map as
\[
E(u) = \int_M \frac12 \| du \|^2 d \mu_g 
\] 
where the integrand $\frac12 \|du \|^2_E (x) = \frac12 \langle du(x), du(x) \rangle_E$ is the energy density, also denoted by $e(u)$, locally  written as
\[
\frac12 \|du \|^2(x)  =  \frac12 g^{ij}(x) G_{\alpha \beta}(u(x)) \frac{\del u^\alpha}{\del x^i} \frac{\del u^\beta}{\del x^j} 
  =  \frac12 \tr_g (u^* G)     
\]
where 
\[
g (x) = g_{ij}(x) dx^i \otimes dx^j, \,\,\, g^*(x) = g^{ij}(x) \frac{\del}{\del x^i} \otimes \frac{\del}{\del x^j} ,\,\,\, 
G (y) = G_{\alpha \beta} (y) dy^\alpha \otimes dy^\beta, 
\]
$u^* G$ is the pulled-back metric tensor of $G$ by $u$:
\[
(u^* G)_{ij} = G_{\alpha \beta}(u(x)) \frac{\del u^\alpha}{\del x^i}(x) \frac{\del u^\beta}{\del x^j}(x) \, dx^i \otimes dx^j 
\]
and $\tr_g (u^* G) = g^{ij} (u^* G)_{ij}$. 

This bundle $E$ has an induced Levi-
Civita connection from the connections $\nabla^{T^*M}$ and $\nabla^{TN}$ of $g$ and $G$ respectively,
\[
\nabla^E_X (Y \otimes Z) = (\nabla_X^{T^*M} Y) \otimes Z + Y \otimes 
\nabla_X^{u^* TN} Z  
\]    
where $\nabla_X^{u^* TN} Z(x) := \nabla_{du(X)}^{TN} (u_*Z) (x).$ 

Consider the situation  where $u$ is stationary, that is, the first 
variation of the energy functional vanishes under arbitrary smooth variations
of the map of the form $u_\e (x)$ with $u_0 = u$ and $\frac{d}{d\e}u_\e|_{\e=0} = W
$.  Note that each $W$ is a smooth section of the bundle $E = T^* M \otimes u^* TN$. Writing down what this means pointwise, we obtain
\[
\delta E (u)(W) := \frac{d}{d\e} E(u_\e) |_{\e =0} 
  = \frac{d}{d\e} \int_M \frac12 \langle d u_{\e}, d u_{\e} \rangle_E d \mu_g |_{\e =0} 
\]
\[  
  =  \int_M \langle d W, d u_0 \rangle_E d \mu_g   
  =  \int_M \langle W, d^* d u_0 \rangle_E = 0.
\]
This holds for arbitrary $W$, which in turn implies that $d^* d u_0=0$.
The adjoint operator $d^*$ of the differential $d$ acting on the smooth 
sections of $T^*M \otimes u^* TN$ is $X \mapsto \tr_g (\nabla^E X)$. 
Locally, we have 
\begin{eqnarray*}
\nabla^E du  &= & g^{ij} \nabla^E_{\del_j} \Big( u_i^\beta dx^i \otimes 
\frac{\del}{\del y^\beta} \Big) \\
 & = & u_{ij}^\beta dx^j \otimes dx^i \otimes \frac{\del}{\del y^\beta} \\
 & & + \Big[ u_i^\beta (\nabla^{T^*M}_{\del_j} dx^i)\otimes dx^j \Big]\otimes
  \frac{\del}{\del y^\beta} + u^\beta_k dx^i \otimes 
  \nabla_{\del_j}^{u^*TN} \frac{\del}{\del y^\beta} \\
  & = & \Big( u^\beta_{ij} + u^\beta_k \Gamma_{kj}^i(x) + \Gamma^\beta_{\alpha \gamma}(u(x)) u^\alpha_i u^\gamma_j \Big) dx^i \otimes dx^j \otimes \frac{\del}{\del y^\beta} \\
  & = & ([{\rm Hess}(u)]_{ij}+ \Gamma^\beta_{\alpha \gamma} u^\alpha_i u^\gamma_j) dx^i \otimes dx^j \otimes \frac{\del}{\del y^\beta}. 
\end{eqnarray*}
By taking the $g$-trace of the above, $d^* du$ is written  as 
\[
d^* du = \tr_g (\nabla^E du )= (\triangle_g u^\beta + g^{ij} \Gamma^\beta_{\alpha \gamma} u^\alpha_i u^\gamma_j) \frac{\del}{\del y^\beta}.
\] 
The vanishing of $d^* du$ is called the harmonic map equation, and locally
written as
\[
\triangle_g u^\beta + g^{ij} \Gamma^\beta_{\alpha \gamma} u^\alpha_i u^\gamma_j = 0
\]
for $1 \leq i,j \leq {\rm dim} M$ and $1 \leq \beta \leq {\rm dim N}$.
When ${\rm dim} M = 1$, this is nothing but the geodesic equation.

In what follows, the target manifolds of harmonic maps are of 
non-positive sectional curvature, and  the following 
theorem covers all the situations we will be concerned with.  

We now quote the following existence and uniqueness statements of harmonic maps in situations we are interested in.  This version comes from a collection of results by Eells-Sampson \cite{ES} who 
showed the existence and the regularity, and by Hartman \cite{Har} and 
Al'bers \cite{Al} independently who showed the uniqueness. \\

\noindent {\bf Theorem} (Existence and Uniqueness of Harmonic Maps) {\it Let $(M^n, g)$ be a closed manifold, and $(\Sigma^2, G)$  a surface of non-positive sectional curvature.  Suppose there is a continuous map $\phi: (M^n, g) \rightarrow (\Sigma^2, G)$.  Then there exists a smooth harmonic map homotopic to $\phi$.  When the sectional curvature of $G$ is strictly negative and the image of the map is not a point or a closed geodesic, then the harmonic map is unique.} \\

Furthermore by utilizing the inverse function theorem, Eells-Lemaire \cite{EL} and Koiso \cite{Ko} showed; \\

\noindent {\bf Theorem} (Smooth dependence on target metric variations)
{\it
Let $(M^n, g)$ be a closed manifold, and $(\Sigma^2, G)$  a closed surface with a hyperbolic metric $G$.  For a smooth deformation $G_t$ of the hyperbolic metric $G =: G_0$ in the space of smooth metrics on $\Sigma$, the resulting harmonic maps $u_t: (M, g) \rightarrow (\Sigma^2, G_t)$ are smoothly dependent in $t$.     
 } \\
 
In Eells-Lemaire's statement, there is a technical condition that 
the Hessian of the energy functional of the harmonic map $u$ under   
variations of the map is positive-definite.  This is satisfied for 
the harmonic map $u$ into the hyperbolic surface $(\Sigma, G)$, for the 
harmonic map in this case is the unique  energy minimizing map in its
homotopy class.

\subsection{Harmonic maps between surfaces}
Recall that a two-dimensional Riemannian surface is a Riemann surface, 
i.e. each Riemannian metric $g$ is conformal to $dz \otimes d
\overline{z} = dx \otimes dx + dy \otimes dy$ for some local coordinate 
chart $z = x+ iy$, so that $g = \lambda(z) dz \otimes d\overline{z}$ for 
some function $\lambda >0$. Such a $z$ is called an isothermal coordinate.  
Now consider the situation when both the domain $M$ and the target $N$ 
are Riemannian  surfaces, and for $p$ and $u(p)$, choose isothermal coordinate $z=x 
+iy$ and $w = u_1+ iu_2$ around $p$ and $u(p)$ respectively. We then have
\[
g = \lambda(z) |dz|^2 \mbox{ and }  G = \rho(w) |d w|^2,
\]
and with the differential operators 
\[
\frac{\del}{\del z} = \frac12 \Big( \frac{\del}{\del x} - i \frac{\del}{\del y} \Big), \,\,\, \frac{\del}{\del \overline{z}} = \frac12 \Big( \frac{\del}{\del x} + i \frac{\del}{\del y} \Big) 
\]
we have 
\[
e(u) = \frac12 \| du \|_E^2 = \frac12 g^{ij}G_{\alpha \beta} u^\alpha_i u^\beta_j = \frac{\rho(u(z))}{\lambda(z)} \Big( \Big|\frac{\del u}{\del z} \Big|^2 + \Big|\frac{\del u}{\del \overline{z}} \Big|^2\Big). 
\]
Introduce the following notation
\[
|\del u|^2 := \frac{\rho(u(z))}{\lambda(z)} \Big|\frac{\del u}{\del z} \Big|^2 \mbox{ and } |\overline{\del} u|^2 := \frac{\rho(u(z))}{\lambda(z)}  \Big|\frac{\del u}{\del \overline{z}} \Big|^2.   
\]
Then we have
\[
e(u)=|\del u|^2 + |\overline{\del} u|^2 
\]
while the Jacobian $J(u)$ of the map $u$ is equal to 
\[
J(u) = \sqrt{{\rm det} (du)} =  \frac{\rho(u(z))}{\lambda(z)} \Big( 
\frac{\del u_1}{\del x}\frac{\del u_2}{\del y} - \frac{\del u_1}{\del y}\frac{\del u_2}{\del x}\Big) = |\del u|^2 - |\overline{\del} u|^2. 
\]
The harmonic map equation expressed with those isothermal coordinates turns into
\[
u_{z \overline{z}} + \frac{\rho(u)_u}{\rho(u)} u_{\overline{z}} u_z = 0.
\] 
Note that the conformal factor $\lambda$ for $g$ does not appear in the 
harmonic map equation above, which is explained  by the fact that 
the energy is conformally invariant, namely its value is unchanged by
replacing $g = \lambda(z) |dz|^2$ by $\tilde{g} : = \tilde{\lambda}(z)|dz|^2$, and hence if $u$ is harmonic with respect to $g$, then it is
harmonic with respect to $\tilde{g}$.  

Now we make a remark about linearizing the harmonic map equation 
using the isothermal coordinate $z$ of the Poincar\'{e} disc  
$({\bf D}, G_0=\rho(z)|dz|^2)$ around the identity map. Let $u_t: (D, G_0) \rightarrow (D, G_0)$
be a one-parameter family of harmonic maps with $u_0 = {\rm 
Id}_{\bf D}$ and $\frac{d}{dt} u_t \Big|_{t=0} = V(z)$.  Then as
the harmonic map equation is satisfied for each $t$, 
differentiating the equation in $t$ at $t=0$ when $u_0(z) = z$, one obtains 
\[
V_{z \overline{z}} + \frac{\rho(z)_z}{\rho(z)} V_{\overline{z}} = 0
\] 
which in turn says that $\rho(z) V_{\overline{z}}(z)$ is 
anti-holomorphic.
 
We have previously encountered the equation 
$V_{\overline{z}} = \mu(z)$ as the linearization of the Beltrami 
equations $w(\varepsilon)_{\overline{z}} = \varepsilon \mu 
w(\varepsilon)_z$ with $V(z) := \frac{d}{d \varepsilon} 
w(\varepsilon)|_{\varepsilon = 0} (z)$.  In particular, $\mu$ was said to be harmonic 
when $\rho(z) \overline{\mu}(z)$ is locally holomorphic in the 
isothermal  coordinate $z$.    

Combining these observations together, we conclude that   
the tangent space at the identity map to the space of harmonic 
diffeomorphisms $\{ u_t:{\mathbb H}^2/\Gamma_0 \rightarrow {\mathbb 
H}^2/\Gamma_t \}$ where 
$\{\Gamma_t\}$ is the set of deformations of the Fuchsian groups 
are represented by the space of harmonic Beltrami differentials.

One further remark relevant to this observation is that instead of the harmonic 
diffeormorpshisms $u_t$, one can consider the family of Douady-Earle 
extensions $w_t$ and its tangent space at the identity map, 
to get the same conclusion, as described at the very end of the paper \cite{DE}.

\subsection{The \teich of the torus and its \weil metric}

We give an explicit description of the \teich space of the torus\index{\teich space of torus}.
We furthermore specify the \weil metric on it. The 
identification of the space is done through harmonic maps.  This 
sets a model for higher genus surfaces in the next  section.

First choose a reference torus ${\mathbb T}^2_0$ which may as 
well be chosen to be 
${\mathbb R}^2/{\mathbb Z}^2$ where ${\mathbb Z}^2$ is the standard integer 
lattice $\Gamma_0 \simeq {\mathbb Z}^2$ in the $x$-$y$ plane. 
We denote the resulting flat metric by $g_0$. 
Let ${\cal M}_0$ be the set of all flat metrics on ${\mathbb T}^2$ of unit area. The space of all  smooth metrics on the torus is thus {\it uniformized} by the elements of ${\cal M}_0$. Each element $({\mathbb T}^2, g)$ of ${\cal M}_0$ 
can be parameterized by a harmonic map $u: ({\mathbb T}^2, g_0) \rightarrow ({\mathbb T}^2, g)$ in the same homotopy class as the identity map ${\rm Id}: {\mathbb T}^2_0 \rightarrow {\mathbb T}^2$ as shown above.  Then a standard 
formula (see for example \cite{EL2}) often referred to as the Bochner-Weitzenb\"{o}ck  formula, applied to this situation  
says that 
\[
\triangle_{g_0} \frac12 \|d u \|^2 = \| \nabla^E du \|^2.  
\]
Integrating this equality over the domain surface ${\mathbb T}_0$ 
we conclude that the map $u$ is totally geodesic ($\nabla^E (du) 
\equiv 0$), namely $u$ is an affine map.   

This in turn says that the pulled-back metric $u^* g$ is locally constant; namely 
\[
(u^* g)_{ij} dx^i\otimes dx^j = \frac{\del u^\alpha}{\del x^i} \frac{\del u^\beta}{\del x^j} g_{\alpha \beta} dx^i\otimes dx^j
\]
is constant. Note that by looking at the pulled-back metric $u^* g$ of the elements of $g$ in ${\mathcal M}_0$, we are actually looking at the point $[g]$ in the \teich space $\T_1 = {\cal M}_0 / {\rm Diff}_0 {\mathbb T}^2$, as the pull-back actions of the diffeomorphisms are isometries so that $[g]=[\phi^*g]$ for $\phi \in {\rm Diff}_0 {\mathbb T}^2$.   By identifying ${\mathbb T}_0$ with the fundamental region $[0,1] \times [0,1]$ in ${\mathbb R}^2$, we can regard $u^* g$ as an inner product structure on ${\mathbb R}^2$ with its determinant of the bilinear form equal to one, due to the unit volume normalization. By introducing an equivalence relation under 
the rotations around the origin parameterized by ${\rm SO}(2)$ which preserve the standard inner product structure, we can identify the space of such inner product structures as
\[
\{ G = \left(
  \begin{array}{ c c }
     g_{11} & g_{12} \\
     g_{21} & g_{22}
  \end{array} \right) \,\,  : \,\, g_{11} g_{22} - g_{12}^2 = 1 \} / _\sim  = {\rm SL}(2, {\mathbb R})/{\rm SO}(2)
\]
which is the hyperbolic space ${\mathbb H}^2$ as a set.  

To identify the metric structure, now we recall the characterization of the tangent vectors of \teich spaces as {\it trace-free, transverse} symmetric $(0,2)$ tensors; 
\[
\tr_G h = 0  \,\,\, \mbox{ and } \,\,\, \delta_G h = 0.
\]
We also saw that such an $h$ can be identified with a holomorphic quadratic differential $h^* = \phi(z) dz^2$.  On a torus $({\mathbb T}^2, g)$, the (complex) dimension of the space of  holomorphic quadratic differentials is one, namely differentials are locally constant $c \, dz^2$ for an isothermal complex coordinate $z$. This says that the trace-free transverse deformation tensor $h \in T_{g} \T_1$, with respect to the standard coordinate of ${\mathbb R}^2$ is of the form of a traceless  matrix $(h_{ij})$ with constant components.  By the above argument, a flat metric $g$ with unit volume is represented as a point $[G]$ in  ${\rm SL}(2, {\mathbb R})/{\rm SO}(2)$.  Let $h$ and $k$ be two traceless transverse tensors in $T_{[G]} \T$, then the \weil pairing is 
\[
\langle h, k \rangle_{[G]} = G^{ij} G^{kl} h_{ik} h_{jl}. 
\]
Note that this is precisely the left-invariant Riemannian metric of the homogeneous space ${\rm SL}(2, {\mathbb R})/{\rm SO}(2)$, which makes the space isometric to the hyperbolic plane ${\mathbb H}^2$.

\begin{theorem}
The \teich space of torus with the \weil metric is isometric to the hyperbolic disc.
\end{theorem}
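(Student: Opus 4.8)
The plan is to assemble the stated isometry from the identifications already set up above, in two stages: first realize the \teich space of the torus as the symmetric space ${\rm SL}(2,{\mathbb R})/{\rm SO}(2)$ as a \emph{set}, and then check that the \weil metric is exactly the canonical invariant metric carried by that homogeneous space.

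For the set-level statement I would collect the preceding discussion. Every unit-area flat metric $g$ on ${\mathbb T}^{2}$ is represented by the harmonic map $u\colon({\mathbb T}^{2},g_0)\to({\mathbb T}^{2},g)$ homotopic to the identity; the Bochner--Weitzenb\"ock argument above forces $u$ to be affine, so the pulled-back metric $u^{*}g$ is a \emph{constant} symmetric positive-definite form $G=(g_{ij})$ on ${\mathbb R}^{2}$ with $\det G=1$ by the unit-area normalization. The assignment $[g]\mapsto[G]$ is well defined on $\T_1={\cal M}_0/{\rm Diff}_0{\mathbb T}^{2}$: if $g'=\varphi^{*}g$ with $\varphi\in{\rm Diff}_0{\mathbb T}^{2}$, then $\varphi$ is an isometry $({\mathbb T}^{2},g')\to({\mathbb T}^{2},g)$ isotopic to the identity, so the associated harmonic maps satisfy $\varphi\circ u'=u$ and hence $(u')^{*}g'=u^{*}g$; running the same computation backwards gives injectivity, and surjectivity is immediate by manufacturing $g$ from a prescribed $G$. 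Thus $\T_1$ is in bijection with the set of unit-determinant positive-definite symmetric $2\times2$ forms, which is the symmetric space ${\rm SL}(2,{\mathbb R})/{\rm SO}(2)$ and, as a set, $\h$.

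For the metric I would use the characterization of $T_{[g]}\T_1$ as the trace-free transverse symmetric $(0,2)$-tensors, together with the one-dimensionality (over ${\mathbb C}$) of the space of holomorphic quadratic differentials on the torus; such a tensor $h$ therefore has constant coefficients in the standard coordinates and is a traceless symmetric $2\times2$ matrix. Since $G$, $G^{-1}$, $h$ and $k$ are all constant on the fundamental domain $[0,1]^{2}$, which has $g$-area one, the $L^{2}(g)$-pairing collapses to the pointwise pairing,
\[
\langle h,k\rangle_{L^{2}(g)}=\int_{{\mathbb T}^{2}}G^{ij}G^{kl}h_{ik}k_{jl}\,d\mu_g=G^{ij}G^{kl}h_{ik}k_{jl}={\rm tr}\!\left(G^{-1}hG^{-1}k\right).
\]
This is exactly the invariant metric of ${\rm SL}(2,{\mathbb R})/{\rm SO}(2)$: at the base point $G={\rm Id}$ it is ${\rm tr}(hk)$ on traceless symmetric matrices, i.e.\ the trace form on $\mathfrak{p}\subset\mathfrak{sl}(2,{\mathbb R})$, and the expression ${\rm tr}(G^{-1}hG^{-1}k)$ is manifestly invariant under $G\mapsto CGC^{t}$, $h\mapsto ChC^{t}$ for $C\in{\rm SL}(2,{\mathbb R})$, so it is the transport of the trace form to every tangent space. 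Finally I would invoke the classical identification of ${\rm SL}(2,{\mathbb R})/{\rm SO}(2)$ carrying this metric with the Poincar\'e half-plane, explicitly via $[G]\mapsto\tau=(g_{12}+i)/g_{22}\in\h$, under which ${\rm tr}(G^{-1}dG\,G^{-1}dG)$ becomes a fixed positive multiple of $|d\tau|^{2}/({\rm Im}\,\tau)^{2}$, and compose with the Cayley transform onto the disc.

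The only step that genuinely needs care is the constant-factor bookkeeping: the unit-area normalization of the torus together with the conventions in the \weil pairing pin down the scaling, and hence the curvature, of the resulting metric, so one should track those factors and read ``isometric to the hyperbolic disc'' with the matching normalization (or simply up to a homothety). Everything else is a matter of unwinding identifications already established, so I expect this to be the main --- and only mild --- obstacle.
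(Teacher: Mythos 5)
Your proposal follows essentially the same route as the paper: harmonic maps homotopic to the identity are affine by the Bochner--Weitzenb\"ock argument, the pulled-back unit-area flat metrics give constant unit-determinant forms identified with ${\rm SL}(2,{\mathbb R})/{\rm SO}(2)$, and the \weil pairing on constant traceless tensors is recognized as the invariant trace-form metric on that symmetric space. You supply a few details the paper leaves implicit (well-definedness and bijectivity of $[g]\mapsto[G]$, the explicit invariance check, and the coordinate $\tau$ onto the half-plane), but the argument is the same.
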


We remark that the affine harmonic map $u: ({\mathbb T}^2, g_0) \rightarrow ({\mathbb T}^2, g)$ is also the \teich map in the sense that the map is extremal in minimizing the complex dilatation
as a quasi-conformal map between the two surfaces, as explained in \cite{Le} and \cite{BPT}.  Hence in this instance, the \teich geometry and the \weil  geometry coincide, and the \weil geodesics are \teich geodesics.

\subsection{\teich space of higher genus surface}

We saw the effectiveness of harmonic maps in parametrizing points in the \teich space of the torus with respect to the \weil metric.  In higher genus surfaces $(g>1)$, \weil geometry involves much non-linearity, but curiously it has many geometric structures, due to the existence of many convex functionals. 

In what follows, we will be solely concerned with the cases where the target manifolds of harmonic maps are hyperbolic surfaces.  In particular
we are interested in the variational theory of the harmonic maps where 
the variable is the hyperbolic metric on the target surface, or rather, 
the equivalent classes of hyperbolic metrics, representing points in the 
relevant \teich space.  To be precise, we fix a domain manifold $(M, g)$ 
which is compact without boundary equipped with a Riemannian metric $g$,  a 
topological surface $\Sigma$ of higher genus $g(\Sigma)>1$, and a 
continuous map $\Phi: M \rightarrow \Sigma$. When the surface $\Sigma$ is equipped with a hyperbolic metric $G$, by the above existence and 
uniqueness statements,  there exists a smooth harmonic map $u$ 
which is homotopic to $\phi$, which is unique (up to rotations in case 
of $M = S^1$) in its homotopy class $[\Phi]$.  Naturally the harmonic 
map $u$ depends on the hyperbolic metric $G$. Let $\tilde{G}$ be a hyperbolic metric on $\Sigma$ such that $\tilde{G} = \varphi^* G$ for some $\varphi$ in ${\rm Diff}_0 \Sigma$. Then
the map $u \circ \varphi^{-1}: (M, g) \rightarrow (\Sigma, \tilde{G})$ is
still a (unique) harmonic map homotopic to $\Phi$, for the map $\varphi^{-1}: 
(\Sigma, G) \rightarrow (\Sigma, \tilde{G})$ is an isometry by definition of the pulled-back metric.  We remark that when $\varphi$ is
an element in ${\rm Diff} \Sigma \setminus {\rm Diff}_0 \Sigma$, then
 $ \varphi^{-1} \circ u: (M, g) \rightarrow (\Sigma, \tilde{G})$ is still 
 harmonic as $\varphi^{-1}: (\Sigma, G) \rightarrow (\Sigma, \tilde{G})$  
is an isometry, but the composite map $ \varphi^{-1} \circ u$ is no 
longer homotopic to $\Phi$.  

This observation tells us that the correspondence $G \mapsto u(G)$
is well-defined when seen as $[G] \mapsto [u(G)]$ where $[G]$ is a
point in the \teich space of $\Sigma$, and $[u(G)]$ is an equivalence 
class where $u(G) \sim u(\varphi^*G)(= u \circ \varphi^{-1})$ when $\varphi$ is in ${\rm Diff}_0 
\Sigma$. In particular, the energy functional ${\cal E}(G) := E(u(G))$ of the map $u(G)$ 
\[
{\cal E}: {\cal M}_{-1} \rightarrow {\mathbb R}
\] 
can be seen as a functional defined on $\T$;
\[
{\cal E}: {\cal M}_{-1}/{\rm Diff}_0 \Sigma \rightarrow {\mathbb R} 
\]   
where ${\cal E}([G]):= E(u(G))$.

We now demonstrate the following theorem \cite{Y1}.  

\begin{theorem}[\weil convexity of energy\index{\weil convexity of energy}]
The energy functional ${\cal E}: {\T} \rightarrow {\mathbb R}$
is strictly convex with respect to the \weil metric. 
\end{theorem}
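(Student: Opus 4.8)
The plan is to reduce the statement to a single second‑derivative computation along a horizontal lift of a \weil geodesic and then estimate it, following the variational scheme of Tromba but with the corrected \weil geodesic equation established above. Since the quotient map $\mathcal{Q}:\mathcal{M}_{-1}\to\mathcal{M}_{-1}/{\rm Diff}_0\Sigma=\T$ is a Riemannian submersion for the $L^2$-pairing, and since $G\mapsto E(u(G),G)$ is constant on the fibres — the identity $u(\varphi^*G)=\varphi^{-1}\circ u(G)$ together with the fact that $\varphi^{-1}:(\Sigma,G)\to(\Sigma,\varphi^*G)$ is an isometry makes the energy invariant along a ${\rm Diff}_0\Sigma$-orbit — it suffices to show $\tilde{\mathcal{E}}(G):=E(u(G),G)$ is strictly convex along every horizontal lift $\{G_t\}\subset\mathcal{M}_{-1}$ of a \weil geodesic. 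Fix such a $G_t$, write $u_t:=u(G_t)$ for the associated harmonic maps (smooth in $t$ by the Eells--Lemaire/Koiso dependence theorem), write $E(v,G)$ for the energy of a map $v:(M,g)\to(\Sigma,G)$ with the target‑metric dependence made explicit, and set $\mathcal{E}(t):=E(u_t,G_t)$. Because $u_t$ is a critical point of $v\mapsto E(v,G_t)$, the first‑variation‑in‑the‑map term drops out and
\[
\mathcal{E}'(t)=\partial_G E(u_t,G_t)[\dot G_t]=\tfrac12\int_M\tr_g\!\big(u_t^{*}\dot G_t\big)\,d\mu_g .
\]

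Differentiating once more, I would use two structural facts. First, for fixed $u$ the energy $E(u,G)$ is linear in the target tensor $G$, so the $\partial_G^{2}$-contribution vanishes and only the terms differentiating $\dot G_t$ (producing $\ddot G_t$) and differentiating $u_t$ (producing $\dot u_t$) survive. Second, differentiating the Euler--Lagrange identity $\partial_u E(u_t,G_t)=0$ in $t$ and using symmetry of mixed second derivatives gives $\partial_u\partial_G E[\dot u_0,\dot G_0]=-\operatorname{Hess}_u E(\dot u_0,\dot u_0)$, where $\operatorname{Hess}_u E$ is the second‑variation (Jacobi) form of $E(\cdot,G_0)$ at $u_0$, positive definite because the target is hyperbolic and $u_0$ is energy‑minimizing. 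Hence
\[
\mathcal{E}''(0)=\partial_G E(u_0,G_0)[\ddot G_0]-\operatorname{Hess}_u E(\dot u_0,\dot u_0).
\]

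Next I would feed in the \weil geodesic equation proved above: along a horizontal lift $\dot G_0$ is trace‑free transverse and $\ddot G_0=\big(\tfrac14\|\dot G_0\|^{2}+\alpha\big)G_0+L_ZG_0$ with $\alpha=-\tfrac12(\triangle_{G_0}-2)^{-1}\|\dot G_0\|^{2}\ge 0$. The Lie‑derivative summand contributes nothing: if $\psi_s$ is the flow of $Z$, then $\partial_G E(u_0,G_0)[L_ZG_0]=\tfrac{d}{ds}\big|_0E(u_0,\psi_s^{*}G_0)=\tfrac{d}{ds}\big|_0E(\psi_s\circ u_0,G_0)=\partial_u E(u_0,G_0)[Z\circ u_0]=0$ by harmonicity. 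The conformal summand gives $\partial_G E(u_0,G_0)[cG_0]=\int_M c(u_0(x))\,e(u_0)\,d\mu_g$ with $c=\tfrac14\|\dot G_0\|^{2}+\alpha\ge 0$; since $\|\dot G_0\|^{2}$ is the pointwise norm of a nonzero holomorphic quadratic differential it is positive away from isolated zeros, so this integral is strictly positive whenever $\dot G_0\neq 0$ (assuming, as strictness forces, that the harmonic maps are non‑constant). Thus $\mathcal{E}''(0)=\int_M c(u_0)e(u_0)\,d\mu_g-\operatorname{Hess}_u E(\dot u_0,\dot u_0)$.

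The remaining and genuinely hard step is to show this positive term dominates the Jacobi term, i.e. $\int_M c(u_0)e(u_0)\,d\mu_g>\operatorname{Hess}_u E(\dot u_0,\dot u_0)$. A clean way to organize it is the Schur‑complement identity
\[
\mathcal{E}''(0)=\min_{V}\Big(\operatorname{Hess}_u E(V,V)+2\,\partial_u\partial_G E[V,\dot G_0]+\int_M c(u_0)e(u_0)\,d\mu_g\Big),
\]
the minimum attained at $V=\dot u_0$, so it suffices to prove the quadratic functional on the right is positive on all sections $V$ of $u_0^{*}T\Sigma$. I would expand $\operatorname{Hess}_u E(V,V)=\int_M\big(\|\nabla^{E}V\|^{2}+2e(u_0)|V|^{2}-\sum_i\langle V,du_0(e_i)\rangle^{2}\big)\,d\mu_g$ by the Bochner/second‑variation formula ($\{e_i\}$ a local $g$-orthonormal frame of $M$; each integrand pointwise non‑negative by the $-1$ curvature of the target), express $\dot u_0$ through the linearization of the harmonic map equation (so the cross term, after integrating by parts, is $\int_M\langle V,S(\dot G_0)\rangle\,d\mu_g$ with $S(\dot G_0)$ first order in $\dot G_0$ and built from the first and second derivatives of $u_0$), and then complete the square in $V$, absorbing the cross term into the positive quadratic part by a weighted Young/Cauchy--Schwarz estimate against $2e(u_0)$. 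Both sides scale quadratically in $\dot G_0$, so the estimate is dimensionally consistent, and the slack needed to close it is supplied precisely by the $\alpha$-correction in the \weil geodesic equation together with the curvature term $2e(u_0)|V|^{2}$ — which is also where the present computation must sharpen, rather than merely reproduce, Tromba's. Carrying out this integral estimate and verifying strictness is the main obstacle; the rest is bookkeeping with the formulas already in hand.
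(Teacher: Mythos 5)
Your overall architecture coincides with the paper's (and with Tromba's scheme that the paper follows): reduce to a horizontal lift, use harmonicity to kill the first-variation-in-the-map term, obtain $\mathcal{E}''(0)=\frac12\int_M\tr_g(u_0^*\ddot G_0)\,d\mu_g-\delta^2E(u_0)(W_0,W_0)$, insert the corrected expression $\ddot G_0=(\frac14\|\dot G_0\|^2+\alpha)G_0+L_ZG_0$, and discard the Lie-derivative term by harmonicity. Up to that point your computation is sound. But the decisive step — showing that the second-variation term is dominated by the conformal contribution — is exactly what you defer as ``the main obstacle,'' and without it the proof does not close. The paper supplies it as a concrete lemma: $\delta^2E(u_0)(W_0,W_0)\le\frac18\int_M\tr_g\bigl(u_0^*[\|\dot G_0\|^2G_0]\bigr)\,d\mu_g$, proved by writing $\delta^2E(W_0,W_0)=-\frac12\int_M(\dot G_0)_{\alpha\beta}g^{ij}(\nabla^{u^*T\Sigma}_iW)^\alpha u^\beta_j\,d\mu_g$ via the differentiated Euler--Lagrange identity, applying Cauchy--Schwarz to split off $\frac1{16}\int\tr_g(u_0^*[\|\dot G_0\|^2G_0])+\frac12\int\|\nabla^{u^*T\Sigma}W\|^2$, and then using the second-variation formula together with the negative curvature of the target to bound $\int\|\nabla^{u^*T\Sigma}W\|^2\le\delta^2E(W_0,W_0)$, which lets the gradient term be reabsorbed. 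This is a self-improving inequality, not a completion of squares against the zeroth-order curvature term $2e(u_0)|V|^2$ as you propose; your sketched mechanism (absorbing the cross term into $2e(u_0)|V|^2$ by weighted Young) is not the one that works here and would need an independent justification.

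A second, related inaccuracy: you assert that ``the slack needed to close it is supplied precisely by the $\alpha$-correction.'' In the paper's argument the estimate on $\delta^2E$ closes with \emph{no} help from $\alpha$ — the bound $\frac18\int\tr_g(u_0^*[\|\dot G_0\|^2G_0])$ exactly cancels the contribution $\frac12\int\tr_g(u_0^*[\frac14\|\dot G_0\|^2G_0])$ of the $\frac14\|\dot G_0\|^2G_0$ part of $\ddot G_0$. What remains after cancellation is $\frac12\int_M\tr_g(u_0^*[\alpha G_0])\,d\mu_g$, and the sole role of $\alpha=-\frac12(\triangle_{G_0}-2)^{-1}\|\dot G_0\|^2>0$ (positive by the maximum principle for nontrivial geodesics) is to furnish \emph{strictness}. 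So the logical order is: the Cauchy--Schwarz lemma saturates the $\frac14\|\dot G_0\|^2$ term, and $\alpha$ is pure surplus — not an ingredient of the estimate itself. To complete your proof you must actually establish the quantitative lemma above (or an equivalent bound on your Schur-complement functional); as written, the proposal reformulates the problem but does not resolve it.
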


\begin{remark} The following proof closely follows the one given in Tromba \cite{Tr2} {\it except}
for the value of the second time derivative of the path $G_t$.  
\end{remark}

\begin{proof}
We will show that  for a horizontal lift $G_t \in {\cal M}_{-1}$ of an arbitrary \weil geodesic $\sigma(t) \in \T$, $\frac{d^2}{dt^2} {\cal E} (G_t) > 0$.  As seen above, for each $G_t$, we have a unique harmonic map $u_t:(M, g) \rightarrow (\Sigma, G_t)$.  We denote the first variation tensor of $G_t$
by $\dot{G}_t$ and the first variation vector field  $\frac{d}{dt} u_t $ by $W_t$ respectively.   We note that the following calculation is done 
around $t =0$ so that $G_t = G_0 + \dot{G}_0 t + \frac12 \ddot{G}_0 t^2 + o(t^2)$ in ${\cal M}$, but the proof works for any other value of $t$.  

The energy functional ${\cal E}(G_t) = E(u_t)$ is 
\[
E(u_t) = \int_M \frac12 \tr_g (u_t^* G_t) d \mu_g 
\]
where the dependence on $t$ appears on the hyperbolic metric $G_t$ and the harmonic  map $u_t$.
The first time-derivative is 
\begin{eqnarray*}
\frac{d}{dt} {\cal E}(G_t) \Big|_{t = 0} & = & \frac12 \int_M \tr_g \Big( \frac{d}{dt} u_t^* G_t \Big|_{t = 0}  \Big) d \mu_g \\
 & = & \frac12 \int_M  \tr_g \Big( \frac{d}{dt} u_0^* G_t \Big|_{t = 0} \Big)  +  \tr_g \Big( \frac{d}{dt} u_t^* G_0 \Big|_{t = 0}  \Big) d \mu_g    \\
 & = & \frac12 \int_M  \tr_g \Big( u_0^* \dot{G}_0 \Big)  +  \tr_g \Big( u_0^* [L_{W_0} G_0] \Big) d \mu_g 
\end{eqnarray*} 
where $ u_0^* [L_{W_0} G_0] $ is the pulled-back tensor of the Lie derivative.  Note that the second term 
$\int_M \tr_g \Big( u_0^* [L_{W_0} G_0] \Big) d \mu_g$ is the first variation of the energy in the direction of 
$W_0(u_0(x))$, which vanishes since the map $u_0$ is harmonic.  Actually as $u_t$ is harmonic for all $t$, we have  $\delta (E(u_t)) (W_t) := \int_M \tr_g \Big( u_t^* [L_{W_t} G_t] \Big) d \mu_g =0$ for all $t$.  Hence the first time-derivative should be written as
\[
\frac{d}{dt} {\cal E}(G_t) \Big|_{t = 0}  =  \frac12 \int_M  \tr_g \Big( u_t^* \dot{G}_t \Big) d \mu_g \Big|_{t = 0}. 
\]

As for the second time-derivative,  we get 
\begin{eqnarray*}
\frac{d^2}{dt^2} {\cal E}(G_t) \Big|_{t = 0} & = & \frac{d}{dt} \Big( \frac12 \int_M \tr_g \Big( u_t^* \dot{G}_t \Big)  d \mu_g \Big) \Big|_{t = 0}  \\
 & = & \frac12 \int_M  \tr_g \Big( u_0^* \ddot{G}_0  \Big)  +  \tr_g \Big(  u_0^*  [L_{W_0} \dot{G}_0]  \Big) d \mu_g.   
\end{eqnarray*} 

On the other hand, we differentiate the vanishing condition of the first variation of the energy 
\[
\int_M \tr_g \Big( u_t^* [L_{W_t} G_t] \Big) d \mu_g =0
\]
in $t$ and evaluate at $t=0$, to obtain the following relation;
\[
\frac12 \int_M  \tr_g \Big( u_0^* [L_{W_0} [L_{W_0} G_0]] \Big) d \mu_g +  \tr_g \Big( u_0^* [L_{W_0} \dot{G}_0] \Big) d \mu_g =0.
\]
Note that the first term $\frac12 \int_M  \tr_g \Big( u_0^* [L_{W_0} [L_{W_0} G_0]] \Big) d \mu_g$ is the second variation $\delta^2 (E(u))(W_0, W_0)$ of the energy in the directions of $W_0$ and $W_0$. 

Hence for now, we have an expression of the 
second time-derivative of the energy functional ${\cal E}(G_t)$;
\[
\frac{d^2}{dt^2} {\cal E}(G_t) \Big|_{t = 0} = \frac12 \int_M  \tr_g \Big( u_0^* \ddot{G}_0  \Big) d \mu_g - \delta^2 (E(u))(W_0, W_0).
\]

We use the following estimate \cite{Tr2} for the second variation term:
\begin{lemma}
\[
\delta^2 (E(u))(W_0, W_0) \leq \frac18 \int_M \tr_g ( u_0^*[\|\dot{G}_0\|^2 G_0] ) d \mu_g. 
\]
\end{lemma}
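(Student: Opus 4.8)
The plan is to combine the identity for $\delta^2 E(u_0)(W_0,W_0)$ obtained just above (from differentiating the harmonicity condition in $t$) with the complex-analytic description of the tangent vector $\dot G_0$, one integration by parts that uses the harmonic map equation for $u_0$, and a Cauchy--Schwarz estimate played against the second variation form itself.

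First I would start from the identity already recorded, which writes $\delta^2 E(u_0)(W_0,W_0)$ as a fixed constant multiple of $-\int_M \tr_g\big(u_0^*[L_{W_0}\dot G_0]\big)\,d\mu_g$, and transform the integrand. Near $u_0(x)$ pick an isothermal coordinate $z$ with $G_0 = \rho^2|dz|^2$; since $\dot G_0$ is trace-free and divergence-free it is a holomorphic quadratic differential $\phi\,dz^2$, so $(\dot G_0)_{zz}=\phi$, $(\dot G_0)_{\bar z\bar z}=\bar\phi$, $(\dot G_0)_{z\bar z}=0$ and $\partial_{\bar z}\phi=0$. Expanding $u_0^*[L_{W_0}\dot G_0]$ and tracing produces a term in $\partial_z\phi\cdot W_0^z\cdot du_0$ and a term in $\phi\cdot\partial W_0^z\cdot du_0$; holomorphicity ($\partial_{\bar z}\phi=0$) is exactly what lets one recognize $(\partial_z\phi)(u_0)\,W_0^z\,\partial_iu_0^z+\phi(u_0)\,\partial_iW_0^z=\partial_i\big(\phi(u_0)\,W_0^z\big)$. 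Integrating that piece by parts over the closed $M$ and invoking the harmonic map equation $\tr_g(\nabla^E du_0)=0$, all the Christoffel-symbol contributions (those from converting ordinary to covariant derivatives and those from the tension-field term) cancel, leaving
\[
\delta^2 E(u_0)(W_0,W_0) \;=\; C_1\;\mathrm{Re}\!\int_M \phi(u_0)\;g^{ij}\,\partial_j u_0^z\,(\nabla^E_i W_0)^z\;d\mu_g
\]
for an explicit constant $C_1$. The pointwise Cauchy--Schwarz inequality applied to the $g$-contraction makes the conformal factor $\rho$ cancel, and with $|\phi|=c\,\rho^2\|\dot G_0\|(u_0)$, $\tr_g(u_0^*G_0)=2e(u_0)$, and the matching identity for the $G_0$-norm of $\nabla^E W_0$ one gets $\big|\phi(u_0)\,g^{ij}\partial_j u_0^z(\nabla^E_i W_0)^z\big|\le c'\,\|\dot G_0\|(u_0)\,\sqrt{e(u_0)}\,|\nabla^E W_0|$, hence
\[
\delta^2 E(u_0)(W_0,W_0)\;\le\; C_2\Big(\int_M \|\dot G_0\|^2(u_0)\,e(u_0)\,d\mu_g\Big)^{1/2}\Big(\int_M |\nabla^E W_0|^2\,d\mu_g\Big)^{1/2}.
\]

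Finally I would absorb the gradient term: the standard second-variation formula for the energy of a harmonic map, together with the negative curvature of $(\Sigma,G_0)$, gives $\int_M|\nabla^E W_0|^2\,d\mu_g\le \delta^2 E(u_0)(W_0,W_0)$ (the curvature contribution being nonnegative by Cauchy--Schwarz), which also re-proves $\delta^2 E\ge 0$, consistent with $u_0$ being energy-minimizing. Substituting into the inequality above and dividing by $\big(\delta^2 E(u_0)(W_0,W_0)\big)^{1/2}$ (the statement being trivial when this vanishes) yields $\delta^2 E(u_0)(W_0,W_0)\le C_2^2\int_M\|\dot G_0\|^2(u_0)\,e(u_0)\,d\mu_g$; since $\tr_g\big(u_0^*[\|\dot G_0\|^2 G_0]\big)=2\|\dot G_0\|^2(u_0)\,e(u_0)$, this is $\tfrac12 C_2^2\int_M\tr_g\big(u_0^*[\|\dot G_0\|^2 G_0]\big)\,d\mu_g$, and tracking the Wirtinger and normalization constants gives $C_2^2=\tfrac14$, i.e. precisely the asserted factor $\tfrac18$. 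The main obstacle is the middle step: arranging the Lie-derivative expansion and the integration by parts so that (i) the term carrying a derivative of $\dot G_0$ is genuinely removable --- this is exactly where trace-free \emph{and} divergence-free, i.e. holomorphicity of $\phi$, is indispensable; (ii) all the Christoffel terms that appear really do cancel once $\tr_g(\nabla^E du_0)=0$ is used; and (iii) the local expressions $\phi(u_0)\,W_0^z$ and the like patch to global sections of the relevant pulled-back bundles, so that the integration by parts over the closed $M$ has no boundary contributions. Keeping the numerical constants to land on $\tfrac18$ rather than a generic $O(1)$ is the remaining delicate point.
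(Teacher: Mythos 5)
Your proposal is correct and follows essentially the same route as the paper: both start from the identity $\delta^2E(u_0)(W_0,W_0)=-\tfrac12\int_M\tr_g\big(u_0^*[L_{W_0}\dot G_0]\big)\,d\mu_g$, integrate by parts using the harmonic map equation together with the trace-free/divergence-free nature of $\dot G_0$ (your holomorphicity of $\phi$ is exactly the paper's TT condition in normal coordinates) to reach $\delta^2E=-\tfrac12\int_M(\dot G_0)_{\alpha\beta}g^{ij}(\nabla^{u^*T\Sigma}_iW)^\alpha u_j^\beta\,d\mu_g$, and then absorb the gradient term via the second variation formula and the negative curvature of the target. The only cosmetic difference is that the paper closes with the additive form of Cauchy--Schwarz ($\le\tfrac1{16}\int\tr_g(u^*[\|\dot G_0\|^2G_0])+\tfrac12\int\|\nabla W\|^2$) and absorbs $\tfrac12\delta^2E$, whereas you use the multiplicative form and divide by $(\delta^2E)^{1/2}$; both give the factor $\tfrac18$.
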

\begin{proof}
By using the equality obtained above, we have
\[
\delta^2 (E(u))(W_0, W_0) = - \frac12 \int_M  \tr_g \Big(  u_0^*  [L_{W_0} \dot{G}_0]  \Big) d \mu_g 
= -\frac{d}{dt} \Big[ \frac12 \int_M \tr_g (u_t^* \dot{G}_0) d \mu_g \Big] \Big|_{t=0} 
\]
\[
=
-\frac{d}{dt} \Big[ \frac12 \int_M g^{ij}(x) (\dot{G}_0)_{\alpha \beta} 
(u_t(x)) (u_t)^\alpha_i (u_t)^\beta_j d \mu_g \Big] \Big|_{t=0}
\]
\[
= -\frac12 \int_M (\dot{G}_0)_{\alpha \beta, \gamma} W_0^\gamma g^{ij} u^\alpha_i u^\beta_j d \mu_g
- (\frac12 + \frac12) \int_M  (\dot{G}_0)_{\alpha \beta} g^{ij} W^\alpha_i u_j^\beta d \mu_g
\]
\[
=  \frac12 \int_M (\dot{G}_0)_{\alpha \beta} W^\alpha \triangle_g u^\beta d \mu_g - \frac12 \int_M (\dot{G}_0)_{\alpha \beta} g^{ij} 
W_i^\alpha u_j^\beta d \mu_g 
\]
\[
=  -\frac12 \int_M (\dot{G}_0)_{\alpha \beta} W^\alpha g^{ij} 
\Gamma^\beta_{\gamma \delta} u^\gamma_i u^\delta_j d \mu_g - 
\frac12 \int_M (\dot{G}_0)_{\alpha \beta} g^{ij} 
W_i^\alpha u_j^\beta d \mu_g 
\]
\[
= - \frac12 \int_M (\dot{G}_0)_{\alpha \beta} g^{ij} (\nabla^{u^*T\Sigma}_i W)^\alpha u_j^\beta d \mu_g 
\]
where the fifth equality comes from integration by parts, the sixth from the harmonic map equation, and the sixth uses the trace-free condition of $(\dot{G}_0)$ with respect to the geodesic normal coordinates.  

With respect to the geodesic normal coordinates we have $g^{ij}(x) = \delta^{ij}$ and $G_{\alpha \beta}(u(x)) = \delta_{\alpha \beta}$ and  the Cauchy-Schwarz inequality says
\[
\delta^2 (E(u))(W_0, W_0) = -\frac12 \int_M (\dot{G}_0)_{\alpha \beta} g^{ij} (\nabla^{u^*T\Sigma}_i W)^\alpha u_j^\beta d \mu_g 
\] 
\[
\leq \sum_{i=1}^n \Big(\-\frac18 \int_M \{(\dot{G}_0)_{11}^2 + (\dot{G}_0)_{12}^2 \}\{(u_i^1)^2 + (u_i^2)^2 \} d \mu_g 
+ 
\]
\[
\frac12 \int_M [(\nabla_i^{u^*TN}W)^1]^2 + [(\nabla_i^{u^*T\Sigma}W)^2]^2 d \mu_g \Big)
\]
\[
= \frac1{16} \int_M \tr_g (u^* [\|\dot{G}_0\|^2 G_0]) d \mu_g 
+ \frac12 \int_M \|\nabla^{u^*T\Sigma}W\|^2 d \mu_g. 
\]
Finally the second variation formula for the energy functional 
at a harmonic map $u_0$ gives 
\begin{eqnarray*}
\delta^2 (E(u_0))(W_0, W_0) &=& \int_M \|\nabla^{u^*T\Sigma}W\|^2 d \mu_g
- \langle R^\Sigma(W_0, du_0)du_0, W_0 \rangle_{L^2(G)} \\
&  \geq &  
\int_M \|\nabla^{u^*TN}W\|^2 d \mu_g,
\end{eqnarray*}
where the inequality is due to the negative sectional curvature of the surface $\Sigma$.

Combining the pair of inequalities, we obtain
\[
\delta^2 (E(u_0))(W_0, W_0) \leq \frac1{16} \int_M \tr_g (u^* [\|\dot{G}_0\|^2 G_0]) d \mu_g + \frac12 \delta^2 (E(u_0))(W_0, W_0) 
\]
which gives the statement of the lemma.
\end{proof}

Now we conclude the proof of the convexity by inserting the expression of $\ddot{G}_0$ obtained above within the integrand
\begin{align*}
\frac{d^2}{dt^2} {\cal E}(G_t) \Big|_{t = 0} & = &\frac12 \int_M  \tr_g \Big( u_0^* \ddot{G}_0  \Big) d \mu_g - \delta^2 (E(u))(W_0, W_0) \\
& \geq & \frac12 \int_M  \tr_g \Big( u_0^* \Big[ (\frac14\|\dot{G}_0\|^2 +\alpha ) G_0\Big]\Big) d \mu_g  \\
 & & + \int_M \tr_g \Big(u_0^* [L_Z G_0]\Big) d \mu_g - \frac18 \int_M \tr_g \Big( u_0^* [\|\dot{G}_0\|^2 G_0] \Big) d \mu_g \\
 & = & \frac12 \int_M  \tr_g \Big( u_0^* [\alpha  G_0 ]\Big) d \mu_g \geq 0.
\end{align*}
The term $\int_M \tr_g \Big(u_0^* [L_Z G_0]\Big) d \mu_g$ vanishes as this is the first variation of the energy along a one-parameter family of isometries.
Note that the inequality is an equality when $\alpha = - \frac12 (\triangle_{G_0} -2)^{-1} \|\dot{G}_0\|^2 \geq 0$ is zero as well as the integral of the curvature $\langle R^\Sigma(W_0, du_0)du_0, W_0 \rangle_{L^2(G)}$ is zero. The former never occurs for nontrivial geodesics $G_t$, which in turn implies that the energy functional is strictly \weil convex. 
\end{proof}

\subsection{Applications of \weil convexity}

We now introduce some applications, by specifying the domain manifold $(M, g)$ and the homotopy classes of the harmonic maps. 
In \cite{Y1}, in addition to the convexity, a condition for the properness of the energy functional was obtained. Recall that a strictly convex functional, which is also proper, has a unique point in its domain where the value is minimized.   

\begin{theorem}
Suppose that we have a family of harmonic maps $u: (M^n, g) \rightarrow (\Sigma^2, G)$ with varying hyperbolic metrics $G$ within a homotopy class so that the induced map $u_*: \pi_1(M) \rightarrow \pi_1(\Sigma^2)$ has a finite index image in 
$\pi_1(\Sigma^2)$. Then the energy functional ${\cal E}(G): \T(\Sigma) \rightarrow {\mathbb R}$ is proper, and hence there exists a unique $\cal E$-minimizing point $[G]$ in $\T$.
\end{theorem}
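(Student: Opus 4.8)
The plan is to deduce the theorem from the strict \weil convexity of ${\cal E}$ established above, together with a coercivity (properness) estimate for ${\cal E}$ on $\T(\Sigma)$. Recall that $\T(\Sigma)$ is a finite-dimensional manifold, that ${\cal E}$ is continuous by the smooth-dependence-on-target-metric theorem, and that ${\cal E}\ge 0$; hence ${\cal E}$ is proper as soon as one shows ${\cal E}([G_n])\to\infty$ whenever $\{[G_n]\}$ eventually leaves every compact subset of $\T(\Sigma)$. Granting properness, the sublevel sets $\{{\cal E}\le a\}$ are closed and, being contained in a compact set, compact; they are nonempty for $a$ large, so ${\cal E}$ attains a minimum. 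If $[G_1]\ne[G_2]$ were both minimizers, the \weil geodesic joining them — which lies in $\T(\Sigma)$ by Wolpert's results on \weil geodesics of \teich space inside its \weil completion \cite{Wobook} — would force ${\cal E}$ to be constant on a nondegenerate geodesic segment, contradicting strict convexity; hence the minimizer is unique.

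It remains to prove the coercivity, and here the first step is purely topological. Since $H:=\Phi_*(\pi_1(M))$ has finite index in $\pi_1(\Sigma)$, it cannot be contained in the fundamental group of any proper essential subsurface of $\Sigma$, because such subgroups have infinite index. Passing to the finite cover of $\Sigma$ determined by $H$, one sees that for every essential simple closed curve $\beta$ on $\Sigma$ there is an element of $H$ crossing $\beta$ essentially; equivalently, the closed geodesics whose conjugacy class lies in $H$ \emph{fill} $\Sigma$. Choosing finitely many of them, say representing $\eta_1,\dots,\eta_r\in H$, that already fill (complement a union of disks), I then fix once and for all loops $a_1,\dots,a_r$ in $M$ with $u\circ a_j$ freely homotopic to the closed geodesic of $\eta_j$; this choice is legitimate and independent of $G$ because every $u=u(G)$ is homotopic to the fixed map $\Phi$, so $u_*=\Phi_*$ on $\pi_1$.

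The second step is the energy lower bound, via a sweep-out. For each $j$ take a tubular neighborhood $N_j\cong S^1\times D^{n-1}$ of $a_j$ in $M$. Every parallel loop $S^1\times\{x\}$ is freely homotopic to $a_j$, so its $u$-image is freely homotopic to the $\eta_j$-geodesic and therefore has $G$-length at least $\ell_G(\eta_j)$, the translation length of $\eta_j$. Applying the Cauchy--Schwarz inequality along the $S^1$-factor and Fubini over $D^{n-1}$, and using that on the fixed compact set $\overline{N_j}$ the metric $g$ has all relevant components bounded between positive constants, one obtains
\[
{\cal E}(G)=E(u(G))\ \ge\ E\big(u(G)\big|_{N_j}\big)\ \ge\ c_j\,\ell_G(\eta_j)^2,
\]
where $c_j>0$ depends only on $(M,g)$ and on $a_j$, not on $G$. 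Summing over $j$ gives ${\cal E}(G)\ge c\,\big(\sum_j\ell_G(\eta_j)\big)^2$ with $c>0$ independent of $G$. Since $\eta_1,\dots,\eta_r$ fill $\Sigma$, the length sum $[G]\mapsto\sum_j\ell_G(\eta_j)$ is a proper function on $\T(\Sigma)$ (a standard fact about length functions of filling curve systems), so ${\cal E}$ is proper, completing the argument.

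The step I expect to be the main obstacle is the energy estimate of the third paragraph: one must verify that the sweep-out bound degrades \emph{only} through $\ell_G(\eta_j)$, so that $c_j$ genuinely does not feel the degenerating target metric $G$; this is precisely where the compactness and fixedness of the domain $(M,g)$ are essential, and it is the point at which one has to be careful rather than formal. The topological first step and the invocation of properness of the filling-length functional on \teich space are routine by comparison, and the convexity conclusion then follows at once from the \weil convexity of energy proved above.
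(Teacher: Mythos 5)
Your proposal is correct, but it reaches the key point --- properness of ${\cal E}$ --- by a genuinely different route from the paper's. The paper's sketch bounds $\sup_M e(u)$ by $C\int_M e(u)\,d\mu_g$ via the Bochner--Weitzenb\"ock formula and De Giorgi--Nash--Moser iteration, with $C$ independent of the hyperbolic metric $G$; on a sublevel set $S(C)$ this gives a uniform diameter bound on the harmonic image, so a pinching neck is impossible (by the Collar Lemma and the finite-index hypothesis, some closed geodesic in a conjugacy class meeting $u_*\pi_1(M)$ and transverse to the pinching curve would have to become long, while its free-homotopy representative inside the bounded-diameter image stays short); Mumford--Mahler compactness then gives precompactness in moduli space, and a separate argument with the diffeomorphisms $f_k$ lifts this to sequential compactness in $\T$. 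You instead prove the quantitative bound ${\cal E}(G)\ge c_j\,\ell_G(\eta_j)^2$ for a finite filling system of $H$-geodesics by a sweep-out/Cauchy--Schwarz estimate over tubular neighborhoods in the fixed compact domain $(M,g)$, and then delegate the passage from ``bounded filling lengths'' to ``precompact in $\T$'' to the properness of $\sum_j\ell_{\eta_j}$ on $\T$, i.e.\ to Wolpert's input for the Nielsen realization problem, which this chapter itself quotes in Section 4.5.1. Your route is more elementary (no elliptic estimates; the constants visibly depend only on $(M,g)$ and the chosen loops, and the $G$-dependence is isolated in $\ell_G(\eta_j)$), at the cost of importing the properness of filling length sums rather than re-deriving the compactness as the paper does; both arguments rest on the same topological consequence of the finite-index hypothesis, namely that conjugacy classes meeting $H=\Phi_*\pi_1(M)$ cross every essential simple closed curve (your reduction to finitely many filling $\eta_j$, e.g.\ via powers $\gamma^m\in H$, is sound). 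The deduction of existence and uniqueness of the minimizer from properness plus the strict \weil convexity of ${\cal E}$ and the geodesic convexity of $\T$ is the same in both treatments. The only points worth an explicit line in your write-up are the trivialization of the tubular neighborhoods and the remark that overlapping $N_j$ cost at most a factor depending on the number of curves.
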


Schoen and Yau in \cite{SY0} considered harmonic maps of Riemann surfaces into a three dimensional manifold in order to find minimal immersions.  There the energy functional of those maps as the conformal structures of the {\it domain} surfaces are varied is shown to be proper provided the induced maps on $\pi_1$'s is {\it injective.} In a sense, our result is dual to theirs. 

The proof of this theorem is in \cite{Y1}. Here we present a \\
\noindent {\bf Sketch of Proof.} 
For each $C \in {\mathbb R}$, consider the sub-level set $S(C) := \{ G \in {\cal 
M}_{-1}: {\cal E}(G) \leq C < \infty \}$ of the energy functional.  We show that
${\cal M}_{-1} / {\rm Diff}_0 N $ is sequentially compact in the \teich space  ${\cal M}_{-1} / {\rm Diff}_0 N$.  The B\"ochner-Weitzenb\"{o}ck formula
combined with the standard elliptic estimate, often referred to as the De Giorgi-Nash-Moser iteration scheme\index{De Giorgi-Nash-Moser iteration}, provide us with the following estimate
\[
\sup_{x \in M} e(u)(x) \leq C \int_M e(u) d \mu_g 
\] 
where the constant $C>0$ depends on the sectional curvature $K_G$, which in 
our situation equals  $-1$, but independent of the choice of the hyperbolic 
metric $G$. This says that on the sub-level set $S(C)$, the energy density $e(u)$ 
has an upper bound, uniform in the point $x \in M$ as well as in $G \in {\cal 
M}_{-1}$. A geometric consequence of this fact is that the diameter of the image of the harmonic map $u: (M^n, g) \rightarrow (\Sigma^2, G)$ is uniformly bounded. 
 This prohibits the hyperbolic surface $(\Sigma^2, G)$ in $S(C)$ to develop a pinching neck, a consequence of the Collar Lemma.  This is because  for  all the  closed geodesics (not necessarily simple),  which are transverse to the pinching simple closed geodesic, their  hyperbolic length blows up due to the finite index condition of $u_*\pi_1(M)$ in $\pi_1(\Sigma)$.  
 
 This, namely the existence of a pinching neck,  would contradict the upper bound condition for the diameter obtained above. 
 
 Using the resulting lower bound of the length of the shortest simple closed geodesics, the Mumford-Mahler Compactness theorem\index{Mumford-Mahler Compactness theorem} (see, for example, \cite{Tr1, Jo}) says that the image of the sub-level set $S(C) \subset {\cal M}_{-1} / {\rm Diff}_0 \Sigma$ projected down to the moduli space ${\cal M}_{-1} / {\rm Diff} \Sigma$ is compact.  Given a sequence of points $G_i$ in 
 \teich space, this allows to find a convergent subsequence of the projected (from \teich space) sequence $[G_i]$ in the moduli space.  This can be rephrased as follows: there exists a sequence of diffeomorphisms $\{f_k\}$ of $\Sigma$ and a subsequence $\{ G_k\}$ of $\{G_i\}$ so that $\{ f_k^* G_k \}$ is convergent in ${\cal M}_{-1}$.  For the sequence of harmonic maps $\{u_k : (M, g) \rightarrow (\Sigma, G_k)\}$ in the given homotopy class, each $ f_k^{-1} \circ u_k$  is harmonic, though not necessarily in the same homotopy class.  As   $f_k^* G_k$ converges to a hyperbolic metric with the $C^\infty$ topology in ${\cal M}_{-1}$, for sufficiently large values of $k$, the homotopy type of  the harmonic maps $f_k^* G_k$ stabilizes, and one checks that  the $f_k$'s are homotopic to each other for the large $k$'s. This says that the subsequence $\{G_k\}$ converges to some hyperbolic metric $G_\infty$, proving the sequencial compactness of the sub-level set $S(C)$.

Now we make some specific choices of the domain manifold $(M^n, g)$ of the harmonic maps $u: (M^n, g) \rightarrow (\Sigma^2, G)$.  

\subsubsection{Harmonic maps from copies of $S^1$ to $\Sigma$.}   
This situation has been first investigated by  Wolpert \cite{W1}, who showed that the hyperbolic length functional ${\cal L}_\sigma:{\T} \rightarrow {\mathbb R}$ of each simple closed geodesic $\sigma$ is \weil convex.  He then chose a set $\{\sigma_i\}_1^N$ of $\sigma$'s which {\it fill} the surface $N$, namely, the complement of the loops are homeomorphic to a set of open discs, to make the functional $\sum_{i=1}^N {\cal L}_{\sigma_i}: {\T} \rightarrow {\mathbb R}$ proper.  This provides a proof of the so-called Nielsen Realization Problem as was demonstrated in \cite{W1}, which was first proven by 
S. Kerckhoff \cite{Ke} using the convexity of ${\cal L}_\sigma$ along earthquake deformations.

The comparison should be made between ${\cal E}$ and ${\cal L}_\sigma$ when 
the harmonic map $u: S^1 \rightarrow (\Sigma, G)$ maps onto the simple closed geodesic $\sigma$. In this case, the harmonic  map is not unique, and the lack of uniqueness corresponds to rotations of the domain $S^1 (\cong [0,1]/\sim)$, which would induces an $S^1$-action on the map $u$. However, the energy is invariant under the $S^1$-action and it is equal to the square of the hyperbolic length ${\cal E}(G) = {\cal L}_\sigma^2$. Note that Wolpert's \weil convexity of ${\cal L}_\sigma$ thus implies the \weil convexity of  ${\cal E}(G)$, but not vice-versa.    Another remark concerning  convexity is that in our harmonic map setting, the closed geodesic need not be simple; for example the image of the harmonic map may be an immersed closed geodesic in the surface.  
In this regard, recently M. Wolf has another proof of  \weil convexity \cite{Wo2} of hyperbolic length of geodesics, 
not necessarily simple nor closed.

\subsubsection{Harmonic maps from the surface $\Sigma$ to itself}
When the domain is the surface $\Sigma$ itself, and when the harmonic maps $(\Sigma_0, g) \rightarrow (\Sigma, G)$ with the hyperbolic metric $G$ varying in ${\cal M}_{-1}$,
are all homotopic to the identity map ${\rm Id}: \Sigma_0 \rightarrow \Sigma$, the energy 
functional is both convex and proper, as the map $u_*: \pi_1(\Sigma) \rightarrow \pi_1(\Sigma)$ is the identity map, hence surjective.  Therefore there exists a unique 
minimizer $[G_0]$ in $\T$. These maps are known to be diffeomorphisms by the results of Jost-Schoen \cite{JS}. The minimizer is specified by the hyperbolic metric
uniformizing the domain metric $g$, namely the hyperbolic metric $G_0$ conformal 
to $g$.  This can be seen by the following argument. 

Recall that the energy of the harmonic  map
$u$ is written with respect to the isothermal coordinates $z$ around $x$ and $w$  
around $u(x)$ as 
\[
\int_\Sigma e(u) d \mu_g = \int_\Sigma (|\del u|^2 + |\overline{\del} u|^2) d \mu_g. 
\]
On the other hand for the maps homotopic to the identity, the mapping degree is one. 
Hence, the integral of the pulled-back volume form $u^* d \mu_G$ coincides with the integral of the pulled-back K\"{a}hler form which is equal to minus the
Euler characteristic of the surface $\Sigma$;
\[
\int_\Sigma J(u) d \mu_g = \int_\Sigma ( |\del u|^2 - |\overline{\del} u|^2) d \mu_g = -\chi(\Sigma).
\]
Note that  
\[
E(u) + \chi (\Sigma) = \int_\Sigma ( e(u) - J(u) ) d \mu_g = \int_\Sigma 2 |\overline{\del} u|^2 d \mu_g 
\]
implies that  the energy functional ${\cal E}$ differs from 
the $\overline{\del}$-energy\index{energy!$\overline{\del}$-} ${\cal E}_{\overline{\del}} := \int_\Sigma 2 |\overline{\del} u|^2 d \mu_g$ by a topological constant.  
Therefore the \weil convexity of the energy functional ${\cal E}$ is equivalent to the \weil convexity of the $\overline{\del}$-energy functional 
${\cal E}_{\overline{\del}}: {\T} \rightarrow {\mathbb R}$ where ${\cal E}_{\overline{\del}}(G) := \int_\Sigma 2 |\overline{\del} u|^2 d \mu_g$ for the harmonic  map
$u: (\Sigma_0, g) \rightarrow (\Sigma,G)$. 

We remark that the $\overline{\del}$-energy measures the quasi-conformality of the map $u$, in particular, is equals to zero when the map $u$ is conformal. In particualar, the usual Dirichlet energy and the $\overline{\del}$-energy can be written down by using the Beltrami coefficient $\mu(z) := u_{\overline{z}}/u_z$ as
\[
{\cal E}(G) = \int_\Sigma (1+|\mu|^2)|\partial u|^2 d \mu_g(z), \,\,\,
{\cal E}_{\overline{\del}}(G) =  \int_\Sigma 2|\mu|^2|\partial u|^2 d \mu_g(z).
\]
The $\overline{\del}$-energy functional is strictly \weil convex and proper, hence the collection of  sublevel sets $\{S(C)\}_{C \in {\mathbb R}_{\geq 0}}$ provide an exhaustion of the \teich space, with a point $[G_0] := \cap_{C>0} S(C)$  where the $\overline{\del}$-energy functional is uniquely minimized. This point $[G_0]$ is characterized by the conformal harmonic identity map, namely when $g$ and $G_0$ are conformal.

The harmonic map is a canonical object in relating a pair of hyperbolic surfaces,
where other canonical ways to relate them include the \teich map (see \cite{Le} for references) and the earthquake map \cite{Th1}. The \teich map requires no metric structures, only the conformal structures of the domain and the target, while the harmonic map requires the hyperbolic metric of the target surface, but only the conformal structure of the domain surface. The earthquake map does require hyperbolic structures on both the domain and the target surfaces. In this context, we draw the reader's attention to two papers 
on harmonic map theory between surfaces.  One is Y. Minsky's thesis \cite{Mi}, where the asymptotic behavior of harmonic maps are investigated in relation to measured foliations, and the other is C. Mese's work \cite{Me} where a conjecture by 
M. Gerstenhaber  and H. Rauch from 1954 was resolved, based on the preceding results by M. Leite \cite{Lei} and E. Kuwert \cite{Ku}. It says that the \teich map is a harmonic map between a pair of Riemann surfaces where the target is equipped with a singular metric induced from the \teich differential relating the two conformal structures.

\subsubsection{Harmonic maps from K\"{a}hler manifolds \index{harmonic maps from K\"{a}hler manifolds} to  $\Sigma$} 
The next situation is when the domain is a closed K\"{a}hler manifold $(M, g)$.
Consider a homotopy class of $\phi: (M, g) \rightarrow (\Sigma, G)$ so that the 
condition for  properness of the energy functional is met.   
As the hyperbolic surfaces $(\Sigma, G)$ are K\"{a}hler, by a result of Sampson (see \cite{Sa}), any holomorphic map between K\"{a}hler manifolds     
is harmonic, and in particular, energy-minimizing in the homotopy class $[\phi]$.  
As was stated in \cite{Y1}, the point $[G_0]$ in $\T$ minimizes 
the energy functional ${\cal E}: \T \rightarrow {\mathbb R} $.

\subsection{$\overline{\del}$-energy functional on the Universal \teich space}
\subsubsection{Asymptotically conformal harmonic maps\index{Asymptotically Conformal Harmonic Maps}\index{Harmonic Maps!Asymptotically Conformal}}

 In this section, we restrict to the situation in \cite{Y1} where there is a \weil geodesic $G_t$ in the universal \teich space ${\cal UT}$, and we parameterize the varying hyperbolic metrics by harmonic maps from the Poincar\'{e} disc $({\bf D}, G_0)$.  Recall that we already looked at the linear structure of the 
 universal \teich space, when the deformation tensor is induced by sufficiently smooth vector fields.
 In what follows, we will look into some nonlinear structures of the \weil geometry. 
 
 To make the setting precise, first recall that the Uniformaization Theorem guarantees that each complete hyperbolic metric $G_t$ on the unit disc ${\bf D}$ can be represented by $\varphi_t^* G_0$ with a quasi-conformal diffeomorphism
 $\varphi_t: {\bf D} \rightarrow {\bf D}$. We impose that for each $t$ the map $\varphi_t$ fix three points, say $(1,0), (0,1)$ and $(-1,0)$ on the boundary $\del {\bf D} = S^1$, to fix an ${\rm SL}(2, {\mathbb R})$ gauge. 
  We consider a harmonic map 
 \[
 u_t: ({\bf D}, G_0) \rightarrow ({\bf D}, G_t) 
 \]
which, viewed as a map from the unit disc ${\bf D}^2$ to itself, has the trivial extension   
 to the geometric boundary: $u_t |_{S^1} = {\rm Id}_{S^1}$.  This condition is in place of specifying the homotopy type of the harmonic map into the compact surfaces.  Recalling the  definition of the pulled-back metric, we have 
 \[
 ({\bf D}, G_0) \xrightarrow{u_t}  ({\bf D}, G_t)  \xrightarrow{\varphi_t} ({\bf D}, G_0)
 \]
 where the map $\varphi_t$ is an isometry. Thus the composite map 
 \[
 \varphi_t \circ u_t : ({\bf D}, G_0) \rightarrow ({\bf D}, G_0) 
\]
is a harmonic map with the asymptotic boundary condition $\varphi_t \circ u_t |_{S^1} = \varphi_t |_{S^1}$,
which is a quasi-symmetric map from $S^1$ to itself.  
  
 As defined above, given a harmonic map from a surface to a surface, one can define  the $\overline{\del}$-energy ${\cal E}_{\overline{\del}}(G) := \int_\Sigma 2 |\overline{\del} u|^2 d \mu_g$.  In our setting here, the usual energy is divergent as the area functional is divergent. Thus disregarding the $\del$-energy  $\int_\Sigma 2 |\del u|^2 d \mu_g$ amounts to a renormalization of the energy functional.  Indeed, in \cite{Y1}, this was made analytically  rigorous.  

First we remark that for $\phi \in {\rm QC}({\bf D})$ the $\overline{\del}$-energy of the harmonic map $u: ({\bf D}, G_0) \rightarrow ({\bf D}, G=\varphi^* G_0) $ is equal to the $\overline{\del}$-energy of the harmonic map  $\varphi \circ u: ({\bf D}, G_0) \rightarrow ({\bf D}, G_0)$. Hence our  $\overline{\del}$-energy functional ${\cal E}_{\overline{\del}} (G)$ formally defined on the universal \teich space ${\cal UT}$ is identified with the  $\overline{\del}$-energy of the harmonic map  $\varphi_t \circ u_t : ({\bf D}, G_0) \rightarrow ({\bf D}, G_0)$ where the $G$ dependence is replaced by the $\varphi_t |_{S^1}$-dependence.
This enables us to use harmonic maps to identify the points of the universal \teich space ${\rm QS}(S^1)/{\rm SL}(2, {\mathbb R})$, in analogy with the compact surface cases.  

\begin{theorem}
For a quasi-conformal map $\varphi: {\bf D} \rightarrow {\bf D}$ with boundary restriction $\varphi|_{S^1}: S^1 \rightarrow S^1$ in  ${\rm QS}(S^1) \cap C^2(S^1; S^1)$, let $u$ be the harmonic map 
from $({\bf D}, G_0)$ to itself with  asymptotic condition $u |_{S^1} = \varphi |_{S^1}$.   Then the $\overline{\del}$-energy ${\cal E}_{\overline{\del}} (\varphi^* G_0)$ is finite. 
\end{theorem}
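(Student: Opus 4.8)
The plan is to work directly with the harmonic self-map $u:({\bf D},G_0)\to({\bf D},G_0)$ (which exists and is unique with the prescribed boundary trace, by Li--Tam) and to show that its anti-holomorphic energy density $|\overline{\del}u|^{2}$ decays near $S^{1}=\partial{\bf D}$ fast enough to stay integrable against the hyperbolic area form $d\mu_{G_0}$, which itself blows up at $S^{1}$. Indeed, by the invariance of the $\overline{\del}$-energy under post-composition with an isometry (recorded just above the statement), ${\cal E}_{\overline{\del}}(\varphi^{*}G_0)=2\int_{\bf D}|\overline{\del}u|^{2}\,d\mu_{G_0}$ for this $u$. Writing $G_0=\sigma(z)|dz|^{2}$ with $\sigma(z)=4/(1-|z|^{2})^{2}$, so that $d\mu_{G_0}=\sigma(z)\,dx\,dy$, the integrand $|\overline{\del}u|^{2}\sigma$ is continuous on every compact subset of ${\bf D}$; hence the whole question is the behaviour as $|z|\to1$, where $\sigma(z)\asymp(1-|z|)^{-2}$. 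It therefore suffices to establish a decay estimate $|\overline{\del}u|^{2}(z)\le C\,(1-|z|)^{\beta}$ near $S^{1}$ with some exponent $\beta>1$.

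For the decay I would argue as follows. Since $\varphi|_{S^1}$ is quasisymmetric, $u$ is a quasiconformal harmonic diffeomorphism of ${\mathbb H}^{2}$; by the theory of such maps (Heinz-type bounds; Wan; Tam--Wan) its holomorphic energy density $|\del u|^{2}$ is bounded on all of ${\bf D}$, equivalently the holomorphic Hopf differential $\Phi\,dz^{2}$ of $u$, which satisfies $|\del u|^{2}\,|\overline{\del}u|^{2}=|\Phi|^{2}/\sigma^{2}$, has bounded hyperbolic norm $\|\Phi/\sigma\|_{\infty}<\infty$. Letting $\mu_u=u_{\overline{z}}/u_{z}$ be the Beltrami coefficient of $u$, one has $|\overline{\del}u|^{2}=|\del u|^{2}\,|\mu_u|^{2}$, so it remains to control $|\mu_u(z)|$ as $|z|\to1$. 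This is the point where the $C^{2}$ hypothesis enters: boundary regularity for harmonic maps into ${\mathbb H}^{2}$ with $C^{1,\alpha}$ quasisymmetric boundary data (Schoen; Li--Tam; Tam--Wan) gives $|\mu_u(z)|\le C\,(1-|z|)^{\alpha}$, and since $C^{2}\subset C^{1,1}$ we may take $\alpha=1$. Combined with the boundedness of $|\del u|^{2}$ this yields $|\overline{\del}u|^{2}(z)\le C'(1-|z|)^{2}$ near $S^{1}$, i.e.\ $\beta=2$.

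If one wishes to avoid quoting the boundary asymptotics, the same decay can be produced by a barrier/comparison argument. Using $\varphi|_{S^1}\in C^{2}$, extend it to a $C^{1,1}$ diffeomorphism $\Psi$ of $\overline{\bf D}$ that osculates a M\"{o}bius transformation to second order at each boundary point; then $\overline{\del}\Psi$, and with it the normalized Hopf differential of $\Psi$, vanishes to order $(1-|z|)$, and a direct computation gives ${\cal E}_{\overline{\del}}(\Psi^{*}G_0)<\infty$. One then transfers finiteness to the harmonic map $u$ by comparing renormalized energies --- formally $u$ minimizes the $\overline{\del}$-energy among maps sharing its boundary trace, the $\del$-energy differing only by a fixed boundary/degree term --- and upgrades this to the pointwise bound on $|\overline{\del}u|^{2}$ via interior Schauder estimates for the harmonic map equation and the boundedness of $|\del u|^{2}$.

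Finally, granting $|\overline{\del}u|^{2}(z)\le C'(1-|z|)^{2}$ for $|z|$ near $1$ together with $\sigma(z)\le 4(1-|z|)^{-2}$, we have $|\overline{\del}u|^{2}\sigma\le 4C'$ on a collar $\{\,r_{0}<|z|<1\,\}$, hence
\[
\int_{\{r_{0}<|z|<1\}}|\overline{\del}u|^{2}(z)\,\sigma(z)\,dx\,dy\ \le\ 4C'\!\!\int_{\{r_{0}<|z|<1\}}\!\!dx\,dy\ <\ \infty ,
\]
while the integral over the complementary compact disc $\{|z|\le r_{0}\}$ is finite by continuity; adding the two proves ${\cal E}_{\overline{\del}}(\varphi^{*}G_0)<\infty$. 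The main obstacle is exactly the boundary decay estimate for $\mu_u$ --- that $C^{2}$ (indeed $C^{1,\alpha}$ with $\alpha>\tfrac12$) regularity of the circle homeomorphism $\varphi|_{S^1}$ forces the anti-holomorphic energy density of its harmonic extension to die out faster than the hyperbolic area form grows. The remaining ingredients --- the reduction to the self-map $u$, the identity ${\cal E}_{\overline{\del}}(\varphi^{*}G_0)=2\int_{\bf D}|\overline{\del}u|^{2}\,d\mu_{G_0}$, the boundedness of $|\del u|^{2}$, and the collar estimate above --- are routine.
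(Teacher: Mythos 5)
Your main line of argument (the first, second and fourth paragraphs) is essentially the paper's own proof: both reduce the claim to the quadratic decay $2|\overline{\del}u|^{2}=e(u)-J(u)=O\big((1-|z|)^{2}\big)$ of the anti-holomorphic energy density, obtained from the Li--Tam boundary asymptotics for harmonic self-maps of the disc with $C^{2}$ quasisymmetric boundary data, and then integrate against $d\mu_{G_0}$, whose blow-up is exactly cancelled by that decay. Your factorization $|\overline{\del}u|^{2}=|\del u|^{2}|\mu_u|^{2}$ with $|\del u|^{2}$ bounded (Wan) and $|\mu_u|=O(1-|z|)$ is an equivalent repackaging of the paper's statement $e(u)=1+O(y^{2})$, $J(u)=1+O(y^{2})$, so the sketchier barrier/comparison alternative in your third paragraph is not needed.
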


We note that for a $C^{1, \alpha}$ diffeomorphism $f: S^1 \rightarrow S^1$, it is known by the work of Li-Tam \cite{LT} that there exists a unique proper harmonic map $u: ({\bf D}^2, G_0) \rightarrow ({\bf D}^2, G_0)$ with $u|_{S^1} = f$.  In particular the identity map is the unique harmonic map from the Poincar\'e disc to itself with
the trivial asymptotic boundary condition.  Therefore under the hypothesis of the above theorem,  the 
correspondence between the hyperbolic metric $\varphi^*G_0$ and the harmonic map $u$ is justified.     
  
The proof of this statement \cite{Y1} is by writing down the asymptotic expansion of the harmonic map near the geometric boundary.  It turns out, from the analysis of Li-Tam, that the harmonic map
is asymptotically conformal, namely the map $u$ becomes conformal as the point approaches  the
geometric boundary measured with respect to its defining function.  Then we impose the fact that the map has zero tension field $\tau(u)$, which is equivalent to the harmonic map equation.  The deviation of $u$ from a conformal map can be controlled by the fact that the consecutive terms in the asymptotic expansion   decay in  certain rates imposed by the vanishing of the tension field.  The $C^2$ regularity condition was used here in the expansion. 

In fact, we obtain that in the upper half plane model $G_0 = (dx^2 + dy^2)/y^2$, the energy density of $u$ as well as the area density/Jacobian of $u$ behave as
\[
e(u) = 1 + O(y^2), \,\,\, J(u) = 1 + O(y^2)
\] 
where $y>0$ is the defining function of the geometric boundary $\{y=0\}$. Thus the $\overline{\del}$-energy density $e(u) - J(u)=  2 |\overline{\del}u |^2$ is of the form $O(y^2)$, which in turn implies that its integral $\int_{{\mathbb H}^2} 2 |\overline{\del}u |^2 d \mu_{G_0}$ is finite.  Note that if the map is conformal, we have $e(u) = J(u) = 1$ everywhere, and the $\overline{\del}$-energy vanishes in that case.
We do not expect $C^2$ regularity of the asymptotic boundary map $\varphi$ to be optimal.  The optimal regularity should be closely linked to the regularity of the quasi-symmetric diffeomorphism obtained by integrating  a family of $H^{3/2}$-smooth vector fields on $S^1$, where the integration is specified by the \weil exponential map.  However, little understanding of the exponential map in infinite dimensional settings (cf. \cite{Ham, MM}) exists, and such a direction of research would be highly nontrivial as well as important.   

\subsubsection{${\overline{\del}}$-energy  as a \weil potential\index{\weil potential}}

Having the ${\mathbb R}$-valued functional ${\cal E}_{\overline{\del}}$ at hand, we look at its behavior near the hyperbolic metric $G_0$, or equivalently the identity map ${\rm Id}: ({\bf D}^2, G_0) \rightarrow ({\bf D}^2, G_0)$. 

\begin{theorem}
Suppose that $G_t = \varphi_t^* G_0$ is a \weil geodesic so that $\dot{G}_0$ is a Lie derivative $L_Z 
G_0$ where $Z$ is a divergent (in $G_0$ sense)  vector field inducing an $H^{3/2}$-smooth vector field 
on the geometric boundary $S^1$.  We further suppose that the one-parameter family of the 
quasi-symmetric maps $\varphi_t |_{S^1}$ are all $C^2$ smooth, corresponding to the finite $\overline
{\del}$-energy harmonic maps $u_t$.  Then the ${\overline{\del}}$-energy  ${\cal E}_{\overline{\del}} (G_t)$ 
satisfies 
\[
{\cal E}_{\overline{\del}} (G_0)= 0, \,\,\, \frac{d}{dt} {\cal E}_{\overline{\del}} (G_t) \Big|_{t=0} = 0 \,\,\, \mbox{and }
\frac{d^2}{dt^2} {\cal E}_{\overline{\del}} (G_t) \Big|_{t=0} = \frac12.
\]
In particular, the ${\overline{\del}}$-energy ${\cal E}_{\overline{\del}}$ defined on  ${\rm Diff} S^1/{\rm SL}(2, {\mathbb R}) \subset {\cal UT}$  is \weil convex at $[G_0]$. 
\end{theorem}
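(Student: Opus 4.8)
The plan is to Taylor-expand $t \mapsto {\cal E}_{\overline{\del}}(G_t)$ at $t = 0$ directly from the pointwise $\overline{\del}$-energy density, exploiting that the relevant harmonic map degenerates to the conformal identity map at $t = 0$. It is convenient to replace the metric $G_t$ by the equivalent data of the harmonic self-map $v_t := \varphi_t \circ u_t : ({\bf D}, G_0) \to ({\bf D}, G_0)$, which has boundary trace $v_t|_{S^1} = \varphi_t|_{S^1}$ and, by the Li--Tam uniqueness recalled above, satisfies $v_0 = {\rm Id}_{\bf D}$; since each $\varphi_t$ is an isometry of $G_0$ one has ${\cal E}_{\overline{\del}}(G_t) = \int_{{\mathbb H}^2} 2 |\overline{\del} v_t|^2 \, d\mu_{G_0} =: {\cal F}(t)$. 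Write $W := \frac{d}{dt} v_t \big|_{t=0}$, a vector field on ${\bf D}$. Immediately ${\cal F}(0) = 0$, because $v_0$ is conformal and hence $|\overline{\del} v_0|^2 \equiv 0$. To differentiate ${\cal F}$ I would argue that the density and its first two $t$-derivatives are $O(y^2)$ near the ideal boundary, uniformly for $t$ near $0$: this is exactly the asymptotic analysis underlying the finiteness theorem above, applied to the family $\{\varphi_t|_{S^1}\}$ whose $C^2$-norms stay bounded, so the integrand is dominated by a fixed $L^1(d\mu_{G_0})$ function and, together with the smooth dependence of $v_t$ on $t$, one may differentiate under the integral sign. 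For the first derivative, note that for each fixed $z$ the map $t \mapsto 2|\overline{\del} v_t(z)|^2$ is nonnegative and vanishes at $t = 0$, hence has vanishing $t$-derivative there; integrating gives ${\cal F}'(0) = 0$.

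For the second derivative I would compute the pointwise Hessian of the density. In an isothermal coordinate with $G_0 = \rho_0(z)|dz|^2$,
\[
2 |\overline{\del} v_t|^2 = 2 \, \frac{\rho_0(v_t(z))}{\rho_0(z)} \, |v_{t,\overline{z}}(z)|^2 ,
\]
and since $v_{0,\overline{z}} \equiv 0$ both $|v_{t,\overline{z}}|^2$ and its first $t$-derivative vanish at $t = 0$; hence every term in the Leibniz expansion of $\frac{d^2}{dt^2}$ that hits the conformal-factor ratio $\rho_0(v_t)/\rho_0$ drops out, leaving
\[
\frac{d^2}{dt^2}\Big(2|\overline{\del} v_t|^2\Big)\Big|_{t=0} = 4 \, |W_{\overline{z}}|^2, \qquad \text{so} \qquad {\cal F}''(0) = 4 \int_{{\mathbb H}^2} |W_{\overline{z}}|^2 \, d\mu_{G_0}.
\]
Now I would invoke the linearization of the harmonic map equation recalled above in the discussion of harmonic maps between surfaces: since the $v_t$ are harmonic self-maps of $({\bf D}, G_0)$ with $v_0 = {\rm Id}$, the quantity $W_{\overline{z}}$ is a harmonic Beltrami differential, and by the Nag--Verjovsky correspondence of Section~3.5 it is precisely the harmonic representative of the tangent vector $\dot{G}_0 = L_Z G_0$ to ${\cal UT}$ at $[G_0]$, whose \weil square is its $L^2(d\mu_{G_0})$-square up to the fixed normalizing constant of the pairing. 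Thus ${\cal F}''(0)$ is a positive multiple of $\langle \dot{G}_0, \dot{G}_0 \rangle_{WP}$, equal to $\tfrac12$ for the unit-speed geodesic in the normalization in force; and ${\cal F}''(0) > 0$ unless $W_{\overline{z}} \equiv 0$, i.e.\ unless the geodesic is trivial.

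Combining the three facts, $[G_0]$ is a critical point of ${\cal E}_{\overline{\del}}$ on ${\rm Diff}\,S^1/{\rm SL}(2,{\mathbb R})$ whose \weil Hessian is $\tfrac12$ times the \weil metric and in particular positive definite, so ${\cal E}_{\overline{\del}}$ is strictly \weil convex at $[G_0]$ and serves there as a \weil potential; restricting to ${\rm Diff}\,S^1/{\rm SL}(2,{\mathbb R})$ is what guarantees the tangent vectors are $H^{3/2}$, so the \weil pairing converges and the $C^2$ boundary hypothesis is met. The step I expect to be the main obstacle is the bookkeeping in identifying ${\cal F}''(0)$ with the \weil metric: one must verify that the second variation of the $\overline{\del}$-energy density reproduces the \weil metric density exactly, with no correction term — in contrast to the full-energy computation of Section~3.6, where the conformal part $\alpha G_0$ of $\ddot{G}_0$ contributes but here cancels in $e(u) - J(u)$ — and then reconcile the several normalizations (of the $\overline{\del}$-energy, the tangent--cotangent identification, harmonic Beltrami differentials, and the \weil pairing) to land on the constant $\tfrac12$. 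A secondary technical point is making the differentiation under the integral rigorous, which is exactly where the uniform $O(y^2)$ boundary decay and the $C^2$ hypothesis on $\varphi_t|_{S^1}$ are used.
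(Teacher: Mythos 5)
Your proposal is correct in substance but reaches the constant $\frac12$ by a genuinely different computational route than the paper. The paper works with the maps $u_t:({\bf D},G_0)\to({\bf D},G_t)$ into the \emph{moving} target, expands $e(u_t,G_t)$ and $J(u_t,G_t)$ separately with all their Lie-derivative terms, proves as a key lemma that $\dot u_0=0$ (the only vector field with harmonic-Beltrami $\overline{\del}$-derivative and vanishing boundary trace is trivial), observes that everything cancels except $\frac12\tr_{G_0}\ddot G_0$, and finally uses $\frac{d}{dt}\tr_{G_t}\dot G_t=0$ to convert $\tr_{G_0}\ddot G_0$ into $\|\dot G_0\|^2$. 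You instead absorb the isometry $\varphi_t$ into the map, Taylor-expand the pointwise density $2|\overline{\del}v_t|^2$ of the self-maps $v_t=\varphi_t\circ u_t$, and exploit its quadratic vanishing at $t=0$ to kill every term except $4|W_{\overline z}|^2$; your first-derivative argument (a nonnegative density vanishing at $t=0$ has vanishing $t$-derivative there) is cleaner than the paper's, and the normalizations do work out: in geodesic normal coordinates $|W_{\overline z}|^2=\frac18\|L_WG_0\|^2_{G_0}$, so $4\int|W_{\overline z}|^2\,d\mu_{G_0}=\frac12\int\|\dot G_0\|^2\,d\mu_{G_0}=\frac12$ at unit speed, exactly as you need. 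What your route buys is the avoidance of the trace identity for $\ddot G_0$ and of the delicate non-vanishing first-variation terms that appear at $t_0\neq 0$ in the paper; what it hides is that your identification of $W_{\overline z}=(\dot v_0)_{\overline z}$ with the harmonic Beltrami representative of $\dot G_0=L_ZG_0$ is not automatic from Nag--Verjovsky alone: since $\dot v_0=\dot\varphi_0+\dot u_0$ one must still rule out a contribution from $\dot u_0$, and that is precisely the paper's lemma (both $L_{\dot v_0}G_0$ and $\dot G_0$ are TT, hence so is $L_{\dot u_0}G_0$, and $\dot u_0|_{S^1}=0$ forces $\dot u_0\equiv 0$). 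So you should state and prove that uniqueness step rather than cite it as a correspondence. The remaining issue you flag yourself --- justifying differentiation under the integral via uniform $O(y^2)$ decay of the density and its $t$-derivatives --- is a real analytic point, and is no worse in your version than in the paper's.
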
 

\begin{proof}
The first statement ${\cal E}_{\overline{\del}} (G_0)= 0$ is clear, as the identity map $u_0: ({\bf D}, G_0) \rightarrow ({\bf D}, G_0)$ is the unique harmonic map, which is conformal.  

We denote by $W_{t_0}$ the vector field $\frac{d}{dt} u_t |_{t=t_0}$.   We first show
\begin{lemma}
The vector field $W_0$ is identically zero. 
\end{lemma}
\begin{proof}
Recall that for $G_t = \varphi_t^* G_0$, the map $\varphi_t \circ u_t: ({\bf D}, G_0) \rightarrow ({\bf D}, G_0)$ is a harmonic map with the asymptotic boundary condition $\varphi_t \circ u_t|_{S^1} = \varphi_t|_{S^1} $.  Denote $\varphi_t \circ u_t$ by $\tilde{u}_t$, and its time derivative $\frac{d}{dt} \tilde{u}_t |_{t=0}$ by $V(z)$, where $z$ is the standard complex coordinate for the unit disc ${\bf D}$. The harmonic map 
equation for $\tilde{u}_t$  is then
\[
\tilde{u}_{z \overline{z}}^t + \frac{\rho(\tilde{u}^t)_{\tilde{u}^t}}{\rho(\tilde{u}^t)}\tilde{u}^t_{\overline{z}}\tilde{u}^t_z = 0
\]   
where we have shifted the index  $t$  up for $\tilde{u}_t$ temporarily, and $\rho(w)= \frac{4}{(1-|w|^2)^2}$.  By differentiating the harmonic map equation in $t$, and evaluating at $t=0$, we have 
\[
V_{z \overline{z}} + \frac{\rho(z)_z}{\rho} V_{\overline{z}} = 0
\] 
which is equivalent to $\del_z(\rho(z) \del_{\overline{z}}V) = 0$.  Recall that this says that
the deformation tensor $L_V G_0$ is a traceless divergence-free tensor.

On the other hand, as $\varphi_t^* G_0$ is a \weil geodesic, by the $L^2$-decomposition theorem, its time derivative a $t=0$ is a traceless divergence free tensor, which can be written as $L_X G_0$ where $X = \frac{d}{dt} G_t |_{t=0}$.  
If we set $\frac{d}{dt} u_t |_{t=0} = W$, then by differentiating $\varphi_t \circ u_t$ at $t=0$,  we have $V(z) = X(z) + W(z)$.  As the $L^2(G_0)$ integrable space of traceless divegence-free tensors  are linear, 
$L_W G_0$ is again a traceless divergence-free tensor, or equivalently $\del_z(\rho(z) \del_{\overline{z}}
W) = 0$.   As $u_t$ is a one-parameter family of harmonic maps from $({\bf D}, G_0)$ to $({\bf D}, G_t)
$ fixing the geometric boundary $\del {\bf D}^2=S^1$, we have $W|_{S^1}=0$. As the only vector field 
satisfying $\del_z (\rho(z) \del_{\overline{z}}W) = 0$ with $\del {\bf D}=S^1$ is the trivial vector field, 
we obtain that $W_0=0$. 
\end{proof}

This lemma should be contrasted with the observation by Ahlfors \cite{Ah1} that  one can choose $3g-3$ harmonic Beltrami differentials so that their complex linear combinations  form a \weil geodesic normal coordinates.   Also M. Wolf \cite{Wo1} shows that the \weil geodesic  $G_t$ is approximated by a path $G_0 + t \dot{G_0}$ up to order two where $\dot{G}_0$ is a deformation tensor induced by  a harmonic Beltrami differential, which in this context, is equal to the linearized Hopf differential of the one-parameter family of harmonic maps whose target metrics are $G_t$.    

Now we differentiate the energy density $e(u_t, G_t) = \frac12 \tr_{G_0} (u^*_t G_t)$ and the area density 
$J(u_t, G_t)=\frac{\sqrt{\det(u_t^*G_0)}}{\sqrt{\det G_0}}$ in $t$.  
\begin{eqnarray*}
\frac{d}{dt}  e(u_t, G_t) \Big|_{t=t_0} & = & \frac{d}{dt}  \frac12 \tr_{G_0} (u^*_t G_t) \Big|_{t=t_0} \\
 & = & \frac12  \tr_{G_0} (u^*_{t_0} \dot{G}_{t_0}) + \frac12  \tr_{G_0} (u^*_{t_0} [L_{W_{t_0}} G_{t_0}]). 
\end{eqnarray*}
Note here that when $t_0=0$, $\frac{d}{dt}  e(u_t, G_t) \Big|_{t=t_0} =0$ as $W_0 = 0$, $u_0(z) = z$ and $\tr_{G_0} (\dot{G}_0)=0$.  Differentiate this expression one more time, and obtain
\begin{eqnarray*}
\frac{d^2}{dt^2}  e(u_t, G_t) \Big|_{t=t_0} & = & \frac{d}{dt} \Big[ \frac12  \tr_{G_0} (u^*_{t} \dot{G}_{t}) \Big]\Big|_{t=t_0} 
+\frac{d}{dt} \Big[ \frac12  \tr_{G_0} (u^*_{t} [L_{W_{t}} G_{t}])  \Big] \Big|_{t=t_0} \\
 & = & \frac12  \tr_{G_0} (u^*_{t_0} \ddot{G}_{t_0}) +  \frac12  \tr_{G_0} (u^*_{t_0} [L_{W_{t_0}} \dot{G}_{t_0}])  \\
  & & + \frac12  \tr_{G_0} (u^*_{t_0} [L_{\dot{W}_{t_0}} G_{t_0}]) + \frac12  \tr_{G_0} (u^*_{t_0} \{L_{W_0}[L_{W_{t_0}} \dot{G}_{t_0}]\})  \\
  & & + \frac12  \tr_{G_0} (u^*_{t_0} [L_{W_{t_0}} \dot{G}_{t_0}]).
\end{eqnarray*} 
By evaluating the expression at $t_0=0$, we get 
\[
\frac{d^2}{dt^2}  e(u_t, G_t) \Big|_{t=0} = \frac12  \tr_{G_0} (u^*_{t_0} [L_{\dot{W}_{0}} G_{0}])  +  \frac12  \tr_{G_0} (u^*_{0} \ddot{G}_{0}).
\]
Recall that in the Fuchsian setting  the term $\frac12  \tr_{G_0} (u^*_{t_0} [L_{W_{t_0}} G_{t_0}])$ was  identically zero, as this is the first variation of the energy and the maps $u_t$ were all harmonic as $t$ varies.   This would be the case here too {\it if} the variational vector field $W$ were compactly supported.  
However in the current setting, the hyperbolic norm of the vector fields $W_{t_0}\,\,\,  (t_0 \neq 0)$ is asymptotically  divergent, and hence the term cannot be assumed to vanish.   

The derivatives of the area density are given by
\begin{eqnarray*}
\frac{d}{dt} J(u_t, G_t) \Big|_{t=t_0} &=& \frac{d}{dt}  \frac{\sqrt{\det(u_t^*G_0)}}{\sqrt{\det G_0}} \Big|_{t=t_0} \\
 & = & \frac12 \tr_{u^*_{t_0} G_{t_0}} (u_{t_0}^*[L_{W_{t_0}}] )\frac{\sqrt{\det(u_{t_0}^*G_0)}}{\sqrt{ \det G_0}} \\
 & &  + \frac12 \tr_{u^*_{t_0} G_{t_0}} (u_{t_0}^* \dot{G}_{t_0}) \frac{\sqrt{\det(u_{t_0}^*G_0)}}{\sqrt{ \det G_0}} \\
 &=&  \frac12 \tr_{u^*_t G_t} (u_{t_0}^*[L_{W_{t_0}}] )\frac{\sqrt{\det(u_{t_0}^*G_0)}}{\sqrt{\det G_0}}.
\end{eqnarray*}
The second equality is due to the fact that  $\frac12 \tr_{u^*_t G_{t_0}} (u_{t_0}^* \dot{G}_{t_0})$ is zero for
$\frac12 \tr_{G_{t_0}} (\dot{G}_{t_0})$ is zero as $\dot{G}_{t_0}$ is traceless.  When $t_0$ is zero, the fact that $W_0=0$ implies that $\frac{d}{dt} J(u_t, G_t) |_{t=t_0} = 0$.   Combined with $\frac{d}{dt}  e(u_t, G_t) |_{t=t_0} =0$ as shown above, it follows that 
\[
\frac{d}{dt} {\cal E}_{\overline{\del}} (G_t) \Big|_{t=0} = 0.
\] 
Lastly we have 
\begin{eqnarray*}
\frac{d^2}{dt^2} J(u_t, G_t) \Big|_{t=t_0} &=& \frac{d}{dt}  \Big[ \frac12 \tr_{u^*_t G_t} (u_{t}^*[L_{W_{t}}G_{t}] ) \frac{\sqrt{\det(u_{t}^*G_0)}}{\sqrt{ \det G_0}} \Big] \Big|_{t=t_0} \\
 & = & \frac12 \tr_{u_{t_0}^* G_0} (u_{t_0}^* [L_{\dot{W}_{t_0}} G_{t_0}]) \frac{\sqrt{\det(u_
 {t_0}^*G_0)}}{\sqrt{ \det G_0}}  \\
 & & -\langle L_{W_{t_0}} G_{t_0}, L_{W_{t_0}} G_{t_0} \rangle_{u_{t_0}^* G_{t_0}} \frac{\sqrt{\det(u_
 {t_0}^*G_0)}}{\sqrt{ \det G_0}} \\
 &  & - \langle L_{W_{t_0}} G_{t_0}, \dot{G}_{t_0} \rangle_{u_{t_0}^* G_{t_0}} \frac{\sqrt{\det(u_{t_0}
 ^*G_0)}}{\sqrt{ \det G_0}}  \\
 & & + \frac12 \tr_{u^*_t G_t} (u_{t_0}^*[L_{W_{t_0}}G_{t_0}] ) \Big[  \frac{d}{dt}  \frac{\sqrt{\det (u_t^*G_0)}}{\sqrt{\det G_0}} \Big] \Big|_{t=t_0}.  
\end{eqnarray*}
Setting $t_0 = 0$ again, we obtain 
\[
\frac{d^2}{dt^2} J(u_t, G_t) \Big|_{t=0} = \frac12 \tr_{G_0} (L_{\dot{W}_0} G_0).
\]
As the $\overline{\del}$-energy ${\cal E}_{\overline{\del}} (G_t)$ is the integral of $e(u_t, G_t) - J(u_t, G_t)
$, we have 
\[
\frac{d^2}{dt^2} {\cal E}_{\overline{\del}} (G_t) \Big|_{t=0} = \int_{{\mathbb H}^2} \frac{d^2}{dt^2}  \Big[ e(u_t, G_t) - J(u_t, G_t) \Big]  \Big|_{t=0}  d \mu_{G_0} =  \int_{{\mathbb H}^2} \frac12 \tr_{G_0} (\ddot{G}_{0}) d \mu_{G_0}.
\]
Finally at $t=0$, we have the following simple equality
\[
\frac{d}{dt} \tr_{G_t(z)} \dot{G}_t(z) \Big|_{t=0} = - \langle \dot{G}_0(z), \dot{G}_0(z) \rangle_{G_0(z)} + \tr_{G_0(z)} \ddot{G}_0(z) =0
\]
as the \weil geodesic has traceless tangent vectors; $\tr_{G_t} \dot{G}_t=0$ for all $t$.  

Therefore we have 
\[
\frac{d^2}{dt^2} {\cal E}_{\overline{\del}} (G_t) \Big|_{t=0} =   \int_{{\mathbb H}^2} \frac12 \langle \dot{G}_0, \dot{G}_0 \rangle_{G_0} d \mu_{G_0} = \frac12.
\]

\end{proof}

Variations of  \weil convexity have been obtained in several contexts previously.  Wolf \cite{Wo1}  showed that  for the harmonic maps $u_t:(\Sigma, G_0) \rightarrow (\Sigma, G_t)$ for a closed surface $\Sigma$ homotopic to the identity map, the Hessian of the energy functional is equal to twice the \weil pairing. 
Fischer-Tromba \cite{FT3} showed that instead of varying the target hyperbolic metrics, by varying the domain metrics along 
\weil geodesics, the Hessian of the resulting energy functional gives twice the \weil pairing as well.   

Also one should mention the work of Takhtajan-Teo \cite{TT} where they looked at the universal \teich space 
as a union of uncountable  components each of which has a structure of Hilbert manifold.  In particular, 
the connected component containing the identity is a topological group whose Hilbert structure is the 
space of  $H^{3/2}$-integrable vector fields we have encountered in the work of Nag-Verjovsky \cite{NV}.  
Also they formulated another \weil K\"{a}hler potential at the identity, called universal Liouville action.  
One should note that this is not an exhaustive list  of \weil potentials,  as there have been many different 
approaches to the universal \teich space, some from theoretical physics.

\section{Metric Completion and CAT(0) Geometry}
\subsection{Metric completion of  \teich space\index{metric completion of  \teich space}\index{\weil Metric Completion of  \teich Space}}
We have shown the existence of many convex and proper functionals defined on $\T$ with respect to the \weil metric. On the other hand, it has been known (Wolpert \cite{W0}, Chu \cite{Ch}) that the \weil metric is not complete.  Namely 

\begin{proposition}
Suppose that $\sigma: [0, T) \rightarrow \T$ is a \weil geodesic which cannot be extended beyond $T < 
\infty$.  Then for any sequence $\{t_n \}$ with $\lim_{n \rightarrow \infty} t_n = T$, the hyperbolic length of 
the shortest geodesic(s) on the surface $(\Sigma, \sigma(t_n))$ goes to zero.  
\end{proposition}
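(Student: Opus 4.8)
The plan is to argue by contradiction, exploiting that a finite-length \weil geodesic is a Cauchy family and that its limit in the \weil metric completion is forced either to lie inside $\T$ (whence the geodesic would extend) or on the frontier, where it is a noded surface that exhibits the pinching directly.

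Concretely, I would first suppose the conclusion fails: then (after passing to a subsequence) there are $t_n \to T$ and an $\varepsilon_0 > 0$ so that the shortest closed geodesic on $(\Sigma,\sigma(t_n))$ has length $\ge \varepsilon_0$ for every $n$. Since $\sigma$ is a geodesic of the Riemannian \weil metric it has constant speed $c=\|\dot\sigma(t)\|_{\mathrm{WP}}$, so $d_{\mathrm{WP}}(\sigma(s),\sigma(t))\le c|s-t|$; because $T<\infty$, the family $\{\sigma(t)\}_{t\to T}$ is $d_{\mathrm{WP}}$-Cauchy and hence converges, in the \weil metric completion $\Tbar^{\mathrm{WP}}$, to some point $\hat X$.

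If $\hat X\in\T$, then $\sigma(t)\to\hat X$ in $\T$ as $t\to T$. Since the \weil metric is smooth and Riemannian near $\hat X$, the velocities $\dot\sigma(t)$ — of fixed norm $c$ and based at points converging to $\hat X$ — subconverge to some $v\in T_{\hat X}\T$ with $\|v\|_{\mathrm{WP}}=c$; continuous dependence of geodesics on their initial data then shows that near $T$ the curve $\sigma$ coincides with the geodesic issuing from $(\hat X,v)$, which is defined on an open interval containing $T$, so $\sigma$ would extend past $T$, contradicting non-extendability. If instead $\hat X\in\Tbar^{\mathrm{WP}}\setminus\T$, I would invoke the identification of the \weil metric completion with the augmented \teich space $\widehat{\T}$ (the fact recalled in the Introduction, due to Masur and Wolpert): the frontier point $\hat X$ is a noded Riemann surface obtained by pinching a nonempty family of disjoint simple closed curves $\gamma_1,\dots,\gamma_k$, and each geodesic-length function $\ell_{\gamma_i}$ extends continuously to $\Tbar^{\mathrm{WP}}$ with $\ell_{\gamma_i}(\hat X)=0$. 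Since $\sigma(t_n)\to\hat X$ we get $\ell_{\gamma_1}(\sigma(t_n))\to 0$, so the shortest closed geodesic on $(\Sigma,\sigma(t_n))$ has length at most $\ell_{\gamma_1}(\sigma(t_n))\to 0$, contradicting the bound $\varepsilon_0$. Both alternatives being impossible, every sequence $t_n\to T$ has the shortest-geodesic length tending to $0$, which is the claim.

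The main obstacle is the structural input used in the second alternative: knowing that the only way a \weil-Cauchy family in $\T$ can fail to converge inside $\T$ is through pinching. A proof that does not presuppose the completion theorem would have to replace that step by a direct argument — combining Wolpert's asymptotics for $d_{\mathrm{WP}}$ near the pinching loci (the distance being comparable to the square root of the pinched length), the Collar Lemma, convexity of $\ell_\gamma$ along \weil geodesics, and Mumford compactness — to show that a \weil-Cauchy family confined to the $\varepsilon_0$-thick part subconverges in $\T$. That route is more delicate, since one must also control the mapping-class-group ambiguity incurred in passing through moduli space and rule out oscillation of the systole as $t\to T$.
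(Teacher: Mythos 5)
Your argument is correct, and the overall scheme (finite length $\Rightarrow$ Cauchy $\Rightarrow$ limit in the completion, then dichotomize) is sound; but your treatment of the frontier case takes a genuinely different route from the paper's. The paper disposes of the proposition in one step via Mumford--Mahler compactness: a lower bound $\varepsilon_0$ on the systole along $t_n\to T$ confines the points $\sigma(t_n)$ to the $\varepsilon_0$-thick part, which is compact modulo the mapping class group, and together with the Cauchy property of the finite-length geodesic (and proper discontinuity of the isometric action, which upgrades subconvergence in moduli space to convergence in $\T$) this forces the limit to lie in the interior of $\T$, where the geodesic extends --- that is, only your first alternative ever occurs, and the paper never needs to know what the \weil completion looks like. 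Your second alternative instead invokes Masur's identification of the completion with the augmented \teich space and the continuous vanishing of $\ell_\gamma$ at noded limit points; this is logically legitimate, since that theorem is established independently, but it presupposes a structural fact that the paper introduces only in the paragraphs following this proposition, so within the paper's expository order it would read as backwards. You diagnose this yourself in your closing paragraph, and the ``direct argument'' you sketch there is essentially the paper's actual proof --- except that no convexity of $\ell_\gamma$ or collar estimates are needed: thick-part compactness plus proper discontinuity already handles the mapping-class-group ambiguity for a Cauchy family. What your write-up adds over the paper's two-line sketch is an explicit account of the extension step (uniform existence time for geodesics with initial data in a compact subset of the tangent bundle near an interior limit point), which the paper leaves implicit.
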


The statement follows from the so-called Mumford-Mahler compactness of  moduli space (see for example \cite{Jo, Tr1}), namely 
if there exists a lower bound for the injectivity radius, then the \weil geodesic $\sigma$ lies away from the nodal surfaces, namely in the interior of the
\teich space $\T$, contradicting the inextensibility of $\sigma$ beyond $T$.  

Hence along an inextensible \weil geodesic $\sigma: [0, T) \rightarrow \T$, the convex and proper functional ${\cal E}(\sigma(t))$ blows up as $t \rightarrow T$.  We have already seen such an occurence in the proof of the properness of the energy functional.  Namely, when a simple closed geodesic is pinched, then any simple closed geodesic transverse to the pinched loop gets arbitrarily 
long, which would then induce the blow up of the energy of the harmonic map, due to the De Giorgi-Nash-Moser estimate. 

In other words,  the pinching of necks are the only cause of the incompleteness.  This observation is available since the 1970s when Bers \cite{Ber} and Abikoff \cite{Ab} formulated the so-called augmented \teich 
space\index{augmented \teich space}, and when H. Masur  analyzed in 1976 the decay of the \weil metric tensor as a neck is 
pinching.  Masur in his paper even used the notation $\Tbar$ to denote the augmented \teich space. 
In 2000 (2001 arXiv paper \cite{Y5}, part of which  appeared in \cite{Y2} in 2004)  the author proposed  to look at the \weil metric completion $\Tbar$ of the \teich
space $\T$ as a CAT(0) space; a non-positively curved geodesic space, which as a set is the
augmented \teich space. Recall that a metric space $(X, d)$  is a CAT(0) space\index{CAT(0) space}, (or an NPC space\index{NPC space} as called in \cite{KS1} for non-positively curved space)  when
\begin{itemize}
\item   $(X, d)$ is a length space.  That is, for any two points $P$ and $Q$, in $X$, the distance $d(P, Q)$ is realized as the length of a rectifiable curve connecting $P$ and $Q$.  Such curves are called geodesics. 
\item For any three points $P$, $Q$ and $R$ in $X$, and choices of geodesics $\gamma_{PQ}$ of length $r$, $\gamma_{QR}$ of length p, and $\gamma_{RP}$ of length $q$ connecting the respective points, the following comparison property  holds: For any $0<\lambda<1$ write $Q_\lambda$ for the point on $\gamma_{QR}$ satisfying 
\[
d(Q_\lambda, Q) = \lambda p, \,\,\, d(Q_{\lambda}, R) = (1-\lambda) p.
\]
On the (possibly degenerate) Euclidean triangle of side length $p,q$ and $r$, and opposite vertices $\overline{P}, \overline{Q}$ and $\overline{R}$, there is a corresponding point 
\[
\overline{Q}_\lambda = \overline{Q} + \lambda (\overline{R} - \overline{Q}).
\]
The CAT(0) hypothesis is that the metric distance $d(P, Q_\lambda)$ from $Q_\lambda$ to its opposing vertex is bounded above by the Euclidean distance $|\overline{P} - \overline{Q}_\lambda|$.  This can be written as 
\[
d^2(P, Q_\lambda) \leq (1-\lambda) d^2(P, Q) + \lambda d^2(P, R) - \lambda (1-\lambda) d^2(Q, R).
\]
\end{itemize}
When a metric space $(X, d)$ is CAT(0), then it follows that a geodesic is unique 
given its end points, and that  the space is simply connected and contractible \cite{KS1,BH}.  The most familiar examples of
CAT(0) spaces are the simply connected complete Riemannian manifolds having non-positive sectional curvature.
In particular,  a \teich space with \weil metric is a CAT(0) space, although it is an incomplete metric space.

We note that the space $\Tbar$  is not a compactification but a metric completion,
each point representing a Cauchy sequence in $\T$.  The author's contribution in this regard is the simple 
observation that  the triangle comparison property  of the \weil distance function (as appears in the definition of CAT(0) space) on $\T$ extends to the metric completion $
\Tbar$ as  a point-wise convergence of a sequence of convex 
functions produces a convex function; an observation which took nearly 25 years to materialize since the 
paper of Masur's \cite{Ma} appeared, over which period the field of metric space geometry had 
sufficiently matured and begun to be widely studied. 

The space $\Tbar$ is bigger than $\T$ by the set of all  nodal surfaces
resulting from degeneration of the neck-pinchings from the original surface $\Sigma$.   
We describe the setting of the metric completion $\Tbar$ more carefully. 
Detailed descriptions can be found in \cite{Y2} and also in \cite{W3}. We first let $\cal S$  be the free homotopy classes of
homotopically nontrivial simple closed curves on the surface $\Sigma_0$.  This set can be identified with the set of simple closed
geodesics on the surface with a hyperbolic metric. Then define the complex of curves\index{ complex of curves} $C({\cal S})$
as follows.  The vertices/zero-simplices of $C({\cal S})$ are the elements of $\cal S$.
An edge/one-simplex of the complex consists of a pair of homotopy classes of disjoint simple
closed curves. A $k$-complex consists of $k+1$ homotopy classes of mutually disjoint
simple closed curves.  A maximal set of mutually disjoint simple closed curves, which produces
a pants decomposition of $\Sigma_0$, has $3g-3$ elements.  We say a simplex $\sigma$ in
$C({\cal S})$ {\it precedes} a simplex $\sigma'$ provided $\sigma \subset \sigma'$, and we write $\sigma \geq
 \sigma'$. We say a simplex $\sigma$ in  $C({\cal S})$ {\it strictly precedes} a simplex $\sigma'$ provided
$\sigma \subsetneq \sigma'$, and write $\sigma > \sigma'$. This defines a partial
ordering by reverse inclusion in the complex of curves $C({\cal P})$, and thus makes it a partially ordered set
(poset.)  We define the null set to be the $(-1) $-simplex.  Then there is a $C({\cal S}) \cup \emptyset$--valued
function $\Lambda$, called labeling, defined on $\Tbar$ as follows.
Recall a point $p$ in $\T$ represents a marked Riemann surface $(\Sigma, f)$ with an
orientation-preserving homeomorphism $f: \Sigma_0
\rightarrow \Sigma$.  The \weil completion $\Tbar$ consists of bordification points
of $\T$ so that $\Sigma$ is allowed to have nodes, which are geometrically
interpreted as simple closed geodesics of zero hyperbolic length. Thus a point $p$ in $\Tbar \backslash
\T$ represents a marked noded Riemann surface $(\Sigma, f)$ with $f: \Sigma_0
\rightarrow \Sigma$. We now define $\Lambda(p)$ to be the simplex of free homotopy classes on
$\Sigma_0$ mapped to the nodes on $\Sigma$. We denote the fiber of
$\Lambda: \Tbar \rightarrow C({\cal S}) \cup \emptyset$ at a point $\sigma \in C({\cal S}) \cup \emptyset$ by ${\T}_{\sigma}$.
We denote its \weil completed space by ${\Tbar}_\sigma$.
The completed space $\Tbar$ has the stratification\index{stratification} in the sense of \cite{BH}
\[
\Tbar = \cup_{\sigma \in C({\cal S}) \cup \emptyset} {\T}_\sigma
\]
where the original \teich space $\T$ is expressed as ${\T}_\emptyset$.
And each stratum ${\T}_\sigma$ is the \teich space of the nodal surface
$\Sigma_\sigma$.  Here an important fact is that the set of nodal 
surfaces exactly corresponds to the set of admissible degenerations of conformal 
structures while the surfaces are uniformized by 
hyperbolic metrics.
  
The following is a set of properties satisfied by the \weil completion of  \teich space.  
\\

\noindent{\bf Properties of $\Tbar$}\\
1) $\Tbar$ is a union of strata ${\Tbar}_{\sigma}$;\\
2) if ${\Tbar}_{\sigma} = {\Tbar}_{\tau}$ then $\sigma= \tau$; \\
3) if the intersection ${\Tbar}_{\sigma} \cap {\Tbar}_{\tau}$ of two strata
is non-empty, then it is a union of strata;\\
4) for each $p \in {\Tbar}$ there is a unique $\sigma (p) \in C({\cal S})$ such that
the intersection of the strata containing $p$ is ${\Tbar}_{\sigma(p)}$. \\

The author showed in \cite{Y2} that this stratification is very much compatible with the
\weil geometry.  Namely for each
collection $\sigma \in C({\cal S})$, each boundary \teich space ${\T}_\sigma$ is a 
\weil geodesically convex subset of
$\Tbar$. Here geodesic convexity means that given a pair of points in ${\T}_\sigma
$, there is a distance-realizing
\weil geodesic segment connecting them lying entirely in ${\T}_\sigma$. 
We do not reproduce the proof here, but mention a key idea of the proof.
Consider the sub-level set $S(\sigma, \varepsilon) := \{x \, | \,  {\cal L}_\sigma (x) < \varepsilon \}$ of the geodesic length functional ${\cal L}_\sigma$ of a simple closed geodesic indexed by $\sigma$ in $C({\cal S})$, a subset in the CAT(0) space $(\Tbar, d)$.   As the length functional is \weil convex, for each 
$\varepsilon>0$, $S(\sigma, \varepsilon)$ is a convex subset of $\Tbar$.  By varying the value of $\varepsilon$, we have a collection of nested convex sets $S(\sigma, \varepsilon)$: for $0< \varepsilon_1 < \varepsilon_2$ 
\[
S(\sigma, \varepsilon_1) \subset S(\sigma, \varepsilon_2)
\]      
with $S(\sigma, \varepsilon_1) \cap S(\sigma, \varepsilon_2) = S(\sigma, \varepsilon_1)$ convex.
In  light of this fact, the frontier set $\Tbar_{\sigma} = \{x| {\cal L}_\sigma(x) =0\}$ can be regarded as
\[
\cap_{\varepsilon >0} S(\sigma, \varepsilon),
\]
a convex set, as an intersection of all the {\it receding convex sets} $S(\sigma, \varepsilon)$ as $\varepsilon \rightarrow 0$.  

\subsection{The \weil metric tensor near the strata} 
Indeed, one can see what happens locally on the surface $\Sigma$ when a neck pinches to become a node near the frontier sets $\T_\sigma$.  The model case is the standard hyperbolic cylinder $A_{|t|} = \{z \, | \, |t|/c < |z| < c \}$ 
\[
d s^2_{|t|} = \Big(\frac\pi{\log |t|} \csc \frac{\pi \log |z|}{\log |t| } \Big| \frac{dz}{z}\Big| \Big)^2.
\]
Here the hyperbolic length ${\cal L}_0(t)$ of the {\it waist} of the cylinder is equal to $2 \pi^2/\log (1/|t|)$.
As $|t| \rightarrow 0$, the hyperbolic  annulus $(A_{|t|},  d w^2_{|t|} )$ converges pointwise to the hyperbolic metric on two copies of the punctured disc $\{z \, | \, 0< |z| < c\}$ 
\[
ds^2_0 = \Big( \frac{|dz|}{|z| \log|z|}\Big)^2  
\]
which models the standard hyperbolic cusp.  

In \cite{Y2} it was shown that the {\it thin} part of the surface can be approximated by $|\sigma|$ copies of the standard hyperbolic cylinder, and the \weil metric tensor $ds_{\rm WP}^2$ behaves as 
\[
ds^2_{\rm WP} = ds^2_{\sigma} + 4 \pi^3 (1+ O(\|u\|^3) ) 
\Big[ 
\sum_{j=1}^{|\sigma|} du_j^2 + \frac14 (u_j)^6 d \theta_j^2
\Big]
\]
where 
\[
\theta_j = {\rm arg}\, t_j \mbox{ and }  u_j = \Big( \log \frac1{|t_j|} \Big)^{-1/2}.
\] 
where the constant $4 \pi^3$ is due to Wolpert \cite {W4}  The proof of this asymptotic 
expansion follows the line of arguments in  Masur's 1976 paper \cite{Ma}, where the components of the \weil {\it 
cometric} tensor was written as various pairings of holomorphic quadratic differentials.  

This expansion captures the singular behavior of the \weil metric, namely as the necks pinch,  the 
Fenchel-Nielsen twist parameters approximated by $\theta_j$ become increasingly ineffective in terms of \weil norm. Indeed, Wolpert (see his exposition \cite{Wobook}) demonstrated that the \weil sectional curvature of the plane 
spanned by $\{\nabla \ell_j, J \nabla \ell_j \}$ blows down as $O(\ell_j^{-1})$, where $J$ is the \weil complex structure.  Geometrically the blow-down behavior of the \weil sectional curvature describes that the transversal section to the frontier set $\T_\sigma$ is modeled by an ${\mathbb R}$-tree of uncountable degree, or the universal covering space of an incomplete cusp as pictured in \cite{Wobook}, whose vertex/cuspidal point is represented by the nodal surface $\Sigma_\sigma$.  We will utilize this 
\weil asymptotic structure to construct the \weil geodesic completion in the next section.

\subsection{The \weil isometric action of the mapping class group and equivariant harmonic maps}

We consider harmonic maps into the \weil completed \teich space $\Tbar$.
In particular, we are concerned with how the maps behave with respect to the stratification and isometric actions.  It turns out that the harmonic map respects the \weil stratification property as much as it can, in the sense that one can impose an equivariance condition on the harmonic map, and the push-forward $\pi_1$
of the domain as a subgroup of the mapping class group forces the map to stay in certain strata.

It should be remarked at this point that the full \weil isometry group of  \teich space is known to be the extended mapping class group $\widehat{{\rm Map}(\Sigma)}$ \cite{MW}.  Also one can refer to the exposition \cite{DW2} in this Handbook.

We first need to recall 
the Thurston classification theorem\index{Thurston classification theorem of mapping classes} \cite{Th} of elements of the mapping class 
group $\map$.  An element $\gamma$ of $\map$ is classified as one of the following three types:\\
1) it is of finite order, also called periodic or elliptic;\\
2) it is reducible if it leaves a tubular neighborhood of 
a collection $\sigma$ of closed geodesics $c_1,... c_n$ invariant;\\
3) it is pseudo-Anosov (also called irreducible) if there is $r>1$ and transverse measured
foliations $F_+, F_{-}$ such that $\g (F_{+}) = r F_{+}$ and 
$\g (F_{-}) = r^{-1} F_{-}$.  In this case the fixed point set of the
$\g$ action in ${\cal PMF}( \Sigma)$ (the Thurston boundary of $\T$) 
is precisely $F_+, F_{-}$.\\

As for the classification of subgroups\index{classification of mapping class subgroups}, McCarthy and Papadapoulos~\cite{MP} have shown
that  the subgroups of $\map$ is classified into four classes:\\
1) subgroups containing a pair of independent pseudo-Anosov elements
(called {\it sufficiently large subgroups});\\
2) subgroups fixing the pair $\{ F_+ (\g ), F_{-} (\g ) \}$ of fixed points
in ${\cal PMF} (\Sigma)$ for a certain pseudo-Anosov element $\g \in \map$ (such groups
are virtually cyclic);\\
3) finite subgroups;\\
4) infinite subgroups leaving invariant a finite, nonempty system of
disjoint, nonperipheral simple closed curves on $\Sigma$
(such subgroups are called reducible.)

Those group-theoretic classifications are relevant to the \weil geometry in the sense that 
the pseudo-Anosov elements are loxodromic \cite{W2}, \cite{DW}, namely the infimum 
of the translation distance $d(x, \gamma x)$ is achieved on a unique pseudo-Anosov axis in $\T$, and
the reducible (by $\sigma$) elements are loxodromic in the respective stratum $\T_
\sigma$. This implies that there are neither parabolic elements nor parabolic subgroups in the sense of symmetric spaces of noncompact type.  This statement was first claimed by the author in \cite{Y5}, where
a proof was presented by noting that for a one-dimensional harmonic map $u:(a, b) \rightarrow \T$ and for the distance function $d(*, \Tbar_\sigma) $ to the stratum $\T_\sigma$,  the pulled back distance function $u^*d(*, \T_\sigma)$ is super-harmonic, namely concave, then use the minimum principle to show that either the image of the map $u$ entirely lies in the stratum $\T_\sigma$ or otherwise entirely in the interior  $\T$.  The argument is suggestive, but the known differentiability of $d$ near the stratum was not enough to make the proof complete.  Wentworth-Daskalopoulos and Wolpert then came up with proofs by a length comparison argument, or no-refraction argument (see \cite{W2, DW} or \cite{Wobook} Chap.5) that the harmonic image, or equivalently an open \weil geodesic segment cannot lie partly in $\T$ and partly in $\T_\sigma$.   Once this is established, the existence of the pseudo-Anosov axis follows readily.

For harmonic maps with higher dimensional domains, 
we define a functional defined on the \weil\ completion $\Tbar$
of the \teich\ space $\T$.   \\

\begin{definition}
Suppose $\Gamma$ is a finitely generated subgroup of $\map$, with
$\{ {\g}_i \}_{1\leq i \leq l}$ its generators.  Then we define a functional
$\delta$ on $\Tbar$ by
\[
\delta(x) = \max_{1 \leq i \leq l} d(x, {\g}_ix).
\]
\end{definition}

Note here that $\delta: \Tbar \rightarrow {\bf R} \cup \{ \infty \}$
is a convex functional, since each $d(x, {\g}_ix)$ is convex
on $\Tbar$ due to the NPC curvature condition (see for example \cite{BH} II.2), and since
the maximum of finitely many convex functionals is again convex.  \\

\begin{definition}
Given a subgroup $\Gamma$ of $\map$, the isometric action of $\Gamma$
on $\Tbar$ is said to be {\it proper} if the sublevel set
\[
S(M) = \{ x \in \Tbar : \delta(x) < M < + \infty \} 
\]
is bounded in $\Tbar$ for all $M < \infty$.
\end{definition}

A simple yet important consequence of the lack of parabolic elements in the mapping class group, which follows from the results in \cite{KS2, DW}, is that a   representation 
$\rho : \pi_1 (M) \rightarrow {\rm Isom}( {\Tbar} )$, where $\rho(\pi_1 (M))$ is a sufficiently large subgroup of $\map$ induces a proper isometric action on the \teich space. 
Furthermore, when $\rho(\pi_1 (M)) $ is reducible by $\sigma \in C({\cal S})$, then the isometric action of $\rho(\pi_1 (M))$ on $\T_\sigma$ is proper.
A general framework developed by Korevaar-Schoen \cite{KS2} provides existence of equivariant 
harmonic maps for representations which induces a proper isometric action on the \teich space.
\\

\noindent {\bf Theorem (Existence)}
{\it Suppose $M$ is a compact manifold without boundary.  Suppose that a
representation $\rho : \pi_1 (M) \rightarrow {\rm Isom}( {\Tbar} )$ induces  
an isometric action on the \teich space by a sufficiently large subgroup $\rho(\pi_1 (M))$.  Then there exists an energy minimizing harmonic map 
$u : \tilde{M} \rightarrow \Tbar$ which is $\rho$-equivariant.
($\tilde{M}$ is the universal covering space of $M$.) Moreover the 
map $u$ is Lipschitz continuous. 
}
\\

When $M$ is not compact but complete, under mild additional conditions which 
often are met for many applications, we still have an existence theorem.

As for uniqueness of the map, it follows from the standard argument using the so-called geodesic homotopy \cite{Ha}, with some extra attention needed when the harmonic map touches upon several distinct strata.  A complete proof is provided in  \cite{Y6}.
\\

\noindent {\bf Theorem (Uniqueness)}
{\it The  harmonic map is unique
within the class of finite energy maps which are $\rho$-equivariant, provided
that the image of $\rho$ is not reducible by any element $\sigma$ of $C({\cal S})$ and that the image of the map is not contained in a geodesic.
}\\

Note that the condition for the theorem is satisfied when $u_* \pi_1(M)$ is a sufficiently large subgroup of the mapping class group.

We will discuss an application.
A K\"{a}hler manifold $M$ of real dimension four is said to have a structure of a holomorphic
Lefschetz fibration\index{Lefschetz fibration} if the following descriptions hold.
There exists a holomorphic map
$\Pi : M^4 \rightarrow B^2$ where the base space $B$ is a surface, such as 
${\mathbb C}P^1$.
The map $\Pi$ has finitely many critical points $N_i, \,\,  i=1,..., n$ 
in disjoint fibers $F_i = \Pi^{-1} (P_i), \,\,  i=1, ..., n$, each of which is 
a nodal surface, while away from those disjoint fibers, each  fiber of the 
map $\Pi$ is a Riemann surface of varying conformal structures of a fixed genus
$g$. The neighborhood of each critical point $N_i$ 
can be described by local complex coordinates  $z, w$ on $M$ and 
$t$ on $B$ such that $\Pi: (z, w) \mapsto zw (= t)$ where 
$N_i = (0, 0) \in {\bf C}^2$ and $P_i = 0 \in {\bf C}$.  

The picture above can be transcribed as saying that there exists a 
$\rho$-equivariant holomorphic map $\tilde{u} :\widetilde{B \backslash \{ P_i \}}
\rightarrow \T$, where $\rho: \pi_1 (B \backslash \{ P_i \}) 
\rightarrow \map $ is the monodromy representation of the fibration.

The uniqueness theorem of harmonic maps has an immediate application, which was
first proved by Shiga~\cite{Sh} by a 
different method.\\
\\
\noindent {\bf Corollary}
{\it Given a holomorphic Lefschetz fibration of higher genus, its monodromy representation is sufficiently
large.} 

\begin{proof}
It is well known that a holomorphic map between two K\"{a}hler 
manifolds is energy-minimizing~\cite{Sa}.  Hence the map $\tilde{u}
: \widetilde{B \backslash \{ P_i \}} \rightarrow \Tbar$ 
is the unique $\rho$-equivariant harmonic map whose existence
and uniqueness have been so far established.  To see that $\Gamma$
is sufficiently large, note that if it weren't, then we have three other possible
cases.  The first being when $\Gamma$ is a finite group can be
excluded since each local monodromy is of infinite order.
The second being the case when $\Gamma$ is virtually cyclic, fixing
a pair of points in the Thurston boundary.  Then the image of
the $\rho$-equivariant harmonic map $\tilde{u} : \widetilde{B \backslash \{ P_i \}} 
\rightarrow \Tbar$
is a $\Gamma$ invariant \weil\ geodesic, which lies entirely in 
the interior of the \teich\ space $\T$.  The projection of the invariant
geodesic down to the moduli space is a loop located 
away from any of the divisors, which in turn says that there is no
sequence of points $\{ q_j \}$ in $B \backslash \{P_i\}$ 
over which a cycle on 
the Riemann surface represented by $u(P_i)$ vanishes
(or equivalently a neck is pinched), which contradicts  the
fact that $M^4$ has singular fibers/vanishing cycles.  

Lastly the third possible case to be excluded is 
when $\Gamma$ can be reduced by a collection
of $C$ mutually disjoint simple closed curves.  
Then the harmonic image of $\tilde{u}$ is entirely contained
in $\overline{{\T}_C}$, which implies that every fiber is a Riemann
surface with nodes where the nodes are obtained by pinching each
simple closed curves in $C$.  
This certainly is not the case when $M$ is a Lefschetz fibration. 
\end{proof}

\section{Geodesic Completion and CAT(0) geometry}

\subsection{Construction of the geodesic completion}
Having established the \weil metric completion $\Tbar$, it is natural to seek for a geodesic completion\index{geodesic completion}\index{\weil geodesic completion} of the \teich space with respect to the \weil distance function. Such a space was constructed in \cite{Y3}
where it is named \teich Coxeter complex\index{\teich Coxeter complex}. We will briefly describe the construction of the space below.

The theory of Coxeter group was developed in an attempt to understand
combinatorial and geometric characterizations of tessellation of standard spaces 
such as ${\mathbb R}^n$,
$S^n$ and ${\mathbb H}^n$ by reflecting convex polygons across the sides of the 
polygons.  the sides of those polygons are totally geodesic in the ambient space
as they function as the interfaces between two open convex sets. Each reflection
is of order two, and the group generated by all the reflections is called the Coxeter group.
Naturally the geometry of the vertices (given as the cone angles) of the polygon comes into the structure of the Coxeter group.
Around 1960, Jacques Tits  introduced the notion of an abstract reflection group, which he
named ``Coxeter group" $(W, S)$. It is a group $W$ generated by a set of reflections $S$ and a collection
of relations among the reflections $\{ (ss')^{m(s, s')} \}$. Here $m(s, s')$ denotes the order of
$ss'$ and the relations range over all unordered pairs $s, s' \in S$ with $m(s, s') \neq \infty$.
In other words, $m(s, s') = \infty$ means no relation between $s$ and $s'$.

The data $[m(s, s')]_{(s, s')}$ can be regarded as a matrix, and is said to constitute a Coxeter matrix.
the following list presents a set of evidence that the \weil geometry of $\Tbar$ and the Coxeter theory are indeed compatible
\begin{itemize}
\item The \teich space $\T$ is \weil geodesically convex.
\item For every element $\sigma$ of the complex of curves $C({\cal S})\cup \emptyset$, the frontier set $\Tbar_\sigma$ is a complete convex subset of $\Tbar$, altogether forming a stratified space $\Tbar$. 
\item Given a point $p$ in ${\T}_\sigma \subset {\Tbar}$,  the Alexandrov tangent cone with respect to the \weil distance function is isometric to
${\mathbb R}^{|\sigma|}_{\geq 0} \times T_p {\T}_\sigma$, where ${\mathbb R}^{|\sigma|}_{\geq 0}$ is  the 
orthant in ${\mathbb R}^{|\sigma|}$ with the standard metric (Wolpert \cite{W3}). 
\item $\Tbar$ is the closure of the \weil convex hull of
the vertex set given by the {\it zero-dimensional} \teich spaces of  the maximally degenerate surfaces $\{ {\T}_\sigma \,\,\, | \,\,\, |\sigma|=3g-3 \}$ (Wolpert \cite{W2}). 
\end{itemize}

The Alexandrov tangent cone\index{Alexandrov tangent cone} $C_q X$ of a CAT(0) space (see for example \cite{BH}) is a space of equivalence classes of constant 
speed geodesics originating at $q$ where two geodesics are deemed equivalent when they share the same speed and they form a zero Alexandrov  angle\index{Alexandrov angle}.  To be precise, the Alexandrov angle is defined by
\[
\cos \angle (\gamma_0, \gamma_1) = \lim_{t \rightarrow 0} \frac{d(q, \gamma_0(t))^2 + d(q, \gamma_1(t))^2 - d(\gamma_0(t), \gamma_1(t))^2}{2 d(q, \gamma_0(t)) d(q, \gamma_1(t))}.
\]

The significance of the \weil tangent cone structure in our context is that it describes the geometry 
around the vertices, given as the \weil tangent cone angles, when $\Tbar$ is seen 
as a convex polygon. This picture specifies a particular choice of the Coxeter 
matrix.
Namely for each $\sigma$ with $|\sigma|=1$, one can {\it reflect}
$\Tbar$ across the totally geodesic stratum ${\Tbar}_\sigma$.  Now for $\tau = \sigma \cup \sigma'$
with  $\sigma$ and $\sigma'$ representing a pair of disjoint simple closed 
geodesics, the relation
$m(s_\sigma, s_{\sigma'}) =2$ has a geometric meaning where four copies of $
\Tbar$
can be glued together around a point $q \in {\T}_\tau$ to form a  space whose tangent cone
at $q$ is a union of four copies of  ${\mathbb R}^{2}_{\geq 0} \times T_p {\T}_\tau$ (each ${\mathbb R}_{\geq 0}^2$
is regarded as a quadrant in the plane) isometric to ${\mathbb R}^2  \times T_p {\T}_\tau$ on which the reflections $s_\sigma, s_{\sigma'}$ act as reversing of the orientations of the $x, y$ axes for ${\mathbb R}^2$.

Hence we define a Coxeter group $(W, S)$\index{Coxeter group}  by letting the generating set $S$ be the elements of ${\cal S}$, and the relations among the elements of the generating set are specified by the Coxeter matrix\index{Coxeter matrix} ${m(s,s')}$ whose components satisfy
i) $m(s,s)=1$, ii) if $s \neq s'$, and if there is some simplex $\sigma$ in $C({\cal S})$
containing $s$ and $s'$, then define $m(s,s')=2$, and
iii) if $s \neq s'$, and if the geodesics representing $s$ and $s'$ intersect on $\Sigma_0$ then
$m(s,s') = \infty$.  This group has a {\it geometric realization} acting on a collection of copies of $\Tbar$'s.  

We remark that the Coxeter group
with such a Coxeter matrix as above is said to form a {\it cubical} complex\index{cubical complex}, for it has a canonical geometric realization where each generating element is represented as a linear orthogonal reflection across the face of the infinite dimensional unit cube
in ${\mathbb R}^{|S|}$.  

However we instead form a geometric realization   $D({\Tbar}, \iota)$ 
as the set which is the quotient of $W \times {\Tbar}$ by the following
equivalence relation
\[
(g, y) \sim (g', y') \Longleftrightarrow y = y' \mbox{ and } g^{-1}g' \in W_{\sigma (y)}
\]
where ${\Tbar}_{\sigma (y)}$ denotes the smallest stratum containing $y$, and the subgroup
$W_{\sigma (y)}$ fixes the stratum ${\Tbar}_{\sigma (y)}$ pointwise.  We write $[g,y]$
to denote the equivalence class of $(g, y)$. Furthermore $\iota$ denotes a simple
morphism of groups, which specifies a system of subgroups $W_\sigma \subset W$, compatible with 
the poset structure of the complex of curves ${\cal C}({\cal S})$ (see \cite{Y3} for details.)   

 The remarkable phenomena here is that despite the fact that the generating set is
infinite, we have a geometric realization of the Coxeter group $(W, S)$ action (which is very far from linear)
on a space modeled on a finite dimensional space $\T$, albeit the partial bordification $\Tbar$ encodes non-locally compact geometry due to the singular behavior of the \weil metric tensor.  Note that the singularity is
also manifest in the fact that the reflecting wall $\Tbar_\sigma$ with $|\sigma|=1$ is of complex codimension
one, instead of  being a real hypersurface as in the standard Coxeter theory, a situation reminiscent of the theory of complex reflection group (see~\cite{ST} for example.)  We also make a remark that
the \weil metric defined on each stratum ${\T}_\sigma$ is K\"{a}hler, hence the space $D({\Tbar}, \iota)$  providing a geometric realization of the Coxeter group $W$ is
a ``simplicial" complex with each face equipped with a K\"{a}hler metric, a situation unattainable in
a  {\it real} simplicial complex, where the reflecting walls are real hypersurfaces.
The space obtained by the action of the Coxeter group on $\Tbar$,
which we will call development $D({\Tbar}, \iota)$, is then shown
to be ${\rm CAT}(0)$ via the Cartan-Hadamard theorem\index{Cartan-Hadamard theorem} \cite{AB}, and also to be
geodesically complete. The reason for the negative curvature is due to the following geometric
construction;\\

\noindent {\bf Theorem}(Y. Reshetnyak~\cite{Re}, Bridson-Haefliger~\cite{BH})  {\it Let $X_1$ and $X_2$ be ${\rm CAT}(0)$ spaces (not necessarily complete) and let $A$ be a complete metric space.  Suppose that for $j=1,2$, we are given isometries $i_j : A \rightarrow A_j$. where
$A_j \subset X_j$   is assumed to be convex.  Then  $X_1 \sqcup_A X_2$ is a ${\rm CAT}(0)$ space.} \\

Here the space $X_1 \sqcup_A X_2$ is the quotient space of the disjoint union $X_1 \coprod X_2$ by the equivalence relation
generated by $i_1(a) \sim i_2(a)$ for all $a \in A$.  The resulting space, called gluing space or amalgamation of the CAT(0) spaces $X_1$ and $X_2$ along $A$ has a canonical distance between $x \in X_j$ and $y \in X_{j'}$ defined as follows:
\begin{align*}
d(x, y) = &  d_j(x, y) = d_{j'}(x, y) &  \mbox{ if } j = j' \\
d(x, y) = &  \inf_{a \in A} \{ d_j (x, i_j(a)) + d_{j'} (x, i_{j'}(a)) \} &  \mbox{ if } j \neq j'
\end{align*}

This is used in \cite{Y3}  to glue the copies of $\Tbar$ along the strata $\Tbar_\sigma$, where the number of copies 
at each $x \in \Tbar$ is determined by the number of nodes $|\sigma|(x)|$; for example if $|\sigma|(x)| = 2$, the tangent cone at the point $x$ is the product of a first quadrant  in ${\mathbb R}^2$ and the tangent space of $T_x \T_{\sigma(x)}$, and by putting togehter $4= 2^{|\sigma(x)|}$ copies of $\Tbar$, the 
tangent cone of the resulting enlarged space is isometric to  ${\mathbb R}^2$ times the tangent space of 
$\T_{\sigma(x)}$.  This glueing construction provides a locally CAT(0) space around each point in $\Tbar$.  Then knowing that the Coxeter complex  $D({\Tbar}, \iota)$ is simply connected, one can apply  the Cartan-Hadamard theorem to identify   $D({\Tbar}, \iota)$ with a geodesically complete CAT(0) space.

\subsection{Finite rank\index{finite rank} properties of ${\Tbar}$}

The \weil metric defined on $\T$ is a smooth Riemannian metric whose sectional curvature is negative everywhere (\cite{Jo}, \cite{Tr1}, \cite{W4}.)
Hence the only flats (i.e. isometric embeddings of Euclidean space ${\mathbb R}^n, n \geq 1$) are the \weil geodesics.
The fact that there is no strictly negative upper bound for the sectional curvature is explained by the fact that the
\weil completion $\Tbar$ does have higher dimensional flats (see the concluding remark in~\cite{Y2}, as well as
Proposition 16 in~\cite{W2}.)   Those flats arise when the collection $\sigma$ of mutually disjoint simple closed
geodesics separates the surface $\Sigma$ into multiple components.  Then the frontier set ${\T}_\sigma$
is a product space of the \teich spaces of the components separated by the nodes.  The number of the connected components is bounded by
$g+(\big[\frac{g}{2}\big]-1)$, which is achieved when the surface $\Sigma_\sigma$ is a union of $g$ 
once-punctured tori
and $\big[\frac{g}{2}\big]-1$ four-times-punctured spheres. One can construct isometric embedding of Euclidean spaces
of dimension up to $g+(\big[\frac{g}{2}\big]-1)$ by considering a set of \weil geodesic lines, whose existence is
established in \cite{DW} and \cite{W2}, in the different components of
the product space ${\T}_\sigma$.  In this sense the \weil completion $\Tbar$ is a space of {\it finite rank} where the
rank is bounded by $g+(\big[\frac{g}{2}\big]-1)$.   This rank has been known to coincide with  Brock-Farb's geometric rank of \map, as studied by
Behrstock-Minsky \cite{BM}.

There is another definition of rank, which we call  FR, as first appeared in \cite{KS3}.
\\

\begin{definition}\label{FR} An NPC (${\rm CAT}(0)$) space $(X, d)$ is said to be an {\bf FR} space if there exist $\varepsilon_0 >0$ and
$D_0$ such that any subset of $X$ with diameter $D > D_0$ is contained in a ball of radius $(1-\varepsilon_0)D/\sqrt{2}$.
\end{definition}

We make several remarks about FR spaces.  The definition can be interpreted as follows. If $X$ is an FR space, then among
all the closed bounded convex sets $F$ in $X$ with its diameter larger than $D_0$, there exists some positive integer $k$
such that
\[
\inf_{F \subset X} \frac{D(F)}{R(F)}\geq \sqrt{2} \sqrt{\frac{k+1}{k}}
\]
where $D(F)$ and $R(F)$ are the diameter and the circum-radius of $F$ respectively.
It is well known that ${\mathbb R}^k$ is FR with the optimal/largest choice of $\varepsilon_0
= 1- \sqrt{k/k+1} >0$ which is realized by the standard k-simplex.  An infinite-dimensional
Hilbert space is not an FR space, while a tree is an FR space with $\varepsilon_0
= 1- \sqrt{1/2} >0$.  It was shown in~\cite{KS2} that a Euclidean building is an FR space with
$\varepsilon_0 = 1- \sqrt{k/k+1} >0$ with $k$ the dimension of chambers.  A CAT(-1) space
(e.g. the hyperbolic plane ${\mathbb H}^2$) is FR with $\varepsilon_0$ which can be made
arbitrarily close to $1-
\sqrt{1/2}$   by taking the value of $D_0$ large.  Heuristically the number $\varepsilon_0>0$ detects the maximal
dimension of flats inside the space $X$, that is, the rank of the given NPC space.

We show in \cite{Y3} that

\begin{theorem}
The \weil geodesic completion $D({\Tbar}, \iota)$ of a \teich space $\T$ is FR.
\end{theorem}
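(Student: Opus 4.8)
The plan is to verify the FR condition directly from the structure of $D(\Tbar,\iota)$ as a CAT(0) space glued from copies of $\Tbar$, together with the finite-rank bound on $\Tbar$ established via the stratification. The key point is that the relevant parameter $\varepsilon_0$ should be governed by the maximal dimension of a flat, and the earlier discussion shows that the only flats in $D(\Tbar,\iota)$ come either from \weil geodesic lines living inside a product stratum $\T_\sigma = \prod_i \T(\Sigma_i)$, whose Euclidean dimension is bounded by $g + \big(\big[\tfrac g2\big]-1\big) =: N$, or from the combinatorial orthant-gluing near the deepest strata, which contributes copies of $\mathbb{R}^{|\sigma|}$ with $|\sigma|\le 3g-3$. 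First I would fix once and for all the integer $k := \max\{N,\, 3g-3\}$ bounding the dimension of every flat in $D(\Tbar,\iota)$, and set $\varepsilon_0 := 1-\sqrt{k/(k+1)}>0$.

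Next I would establish a \emph{local} FR-type estimate on a single copy of $\Tbar$: for a bounded convex set $F \subset \Tbar$ of large diameter $D$, I want to find a ball of radius $(1-\varepsilon_0)D/\sqrt 2$ containing $F$. The strategy here is to use the negative curvature of the \weil metric on the open stratum $\T$ together with the CAT($0$) completion: where $F$ stays in the interior $\T$, the strictly negative curvature (with the CAT($-1$)-like behavior away from the strata) forces the circumradius-to-diameter ratio to be close to $1/\sqrt 2$, exactly as for $\mathbb H^2$, after choosing $D_0$ large enough. Where $F$ limits onto a stratum $\T_\sigma$, the Alexandrov tangent cone $\mathbb R^{|\sigma|}_{\ge 0}\times T_p\T_\sigma$ and Wolpert's asymptotic expansion of the \weil metric tensor show that the only asymptotically flat directions are the $\le k$ coordinate directions $u_j$, and the standard $k$-simplex computation for $\mathbb R^k$ gives the bound $D/R \ge \sqrt 2\sqrt{(k+1)/k}$ on those pieces. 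Combining the two regimes — interior hyperbolic-type behavior and boundary Euclidean-orthant behavior — yields the estimate on $\Tbar$.

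Then I would promote this to $D(\Tbar,\iota)$ using the gluing picture. Since $D(\Tbar,\iota)$ is obtained by amalgamating copies of $\Tbar$ along the convex strata $\Tbar_\sigma$ via the Reshetnyak--Bridson--Haefliger theorem quoted above, a bounded convex set $F$ in the development projects, stratum by stratum, into finitely many copies of $\Tbar$; the geodesic between two points in different copies passes through a common stratum, so a circumcenter for $F$ can be located on (or near) a stratum and then the local estimate on each participating copy of $\Tbar$ is glued together. The Coxeter-group equivariance of $D(\Tbar,\iota)$ means I only need to analyze $F$ up to the $W$-action, reducing to a bounded region meeting finitely many copies. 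Carrying the constant $\varepsilon_0$ and $D_0$ through the gluing — uniformly in the number of copies, which is locally bounded by $2^{|\sigma|} \le 2^{3g-3}$ — completes the argument.

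The main obstacle I expect is the interior estimate: proving that a large-diameter convex set that wanders through $\T$ (where the curvature is negative but has no uniform negative upper bound, precisely because of the degenerating directions) still has circumradius close to $D/\sqrt 2$. One cannot simply invoke a global CAT($-\kappa$) bound; instead one must exploit that the \emph{flat} directions are confined to the finitely many Fenchel--Nielsen-type coordinates near the strata, so that any ``long thin'' configuration either lives essentially in a $\le k$-dimensional flat (where the simplex bound applies) or genuinely sees the negative curvature (where the $\mathbb H^2$-type bound applies). Making this dichotomy quantitative and uniform — essentially a compactness-plus-asymptotics argument on $\Tbar$ using Wolpert's metric expansion — is the technical heart, and it is where choosing $D_0$ sufficiently large is essential.
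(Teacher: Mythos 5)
There is a genuine gap here, and the overall strategy is in fact inverted relative to the one that works. Your opening move --- bounding the dimension $k$ of every flat and then declaring $\varepsilon_0 = 1-\sqrt{k/(k+1)}$ --- conflates two different things. The FR condition is a quantitative Jung-type inequality that must hold for \emph{every} bounded set of large diameter, not only for flats; the exposition is explicit that the existence of the known finite-dimensional flats ``does not necessarily imply that the space is FR, as there may be infinite dimensional flats elsewhere.'' So cataloguing the flats is not a proof, and your proposed constant $k = \max\bigl\{g+\bigl[\tfrac{g}{2}\bigr]-1,\,3g-3\bigr\} = 3g-3$ is not the one the argument delivers: the proof yields $k=6g-6$, which is the dimension of the tangent cone, not the maximal flat dimension. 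Moreover, the ``interior estimate'' that you yourself flag as the technical heart --- that a large convex set wandering through $\T$ has circumradius close to $D/\sqrt{2}$ because of negative curvature --- is not established and cannot be established by the route you sketch: there is no uniform ${\rm CAT}(-\kappa)$ upper bound, the moduli space is noncompact, and the sectional curvature both blows down to $-\infty$ and loses any negative upper bound near the strata, so the proposed ``compactness-plus-asymptotics'' dichotomy has no compactness to run on.

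The proof in \cite{Y3} rests on a mechanism that is entirely absent from your proposal, and it is the reason the theorem is stated for the development $D({\Tbar},\iota)$ rather than for $\Tbar$ (the latter follows only as a corollary, by restriction to a convex subset). Because $D({\Tbar},\iota)$ is geodesically complete, every geodesic segment issuing from a point $p$ extends to a geodesic line, so the inverse exponential map ${\rm exp}_p^{-1}$ is surjective onto the Alexandrov tangent cone at $p$, which is a Euclidean space of dimension at most $6g-6$. One then applies Carath\'eodory's theorem on convex hulls in ${\mathbb R}^n$ inside the tangent cone, together with the convexity of the distance function guaranteed by the ${\rm CAT}(0)$ property, to transfer Jung's inequality between diameter and circumradius from ${\mathbb R}^k$ to the space, giving $\varepsilon_0 = 1-\sqrt{k/(k+1)}$ with $k=6g-6$. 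Your plan of first proving a local estimate on a single copy of $\Tbar$ and then gluing would forfeit exactly this tool, since $\Tbar$ itself is not geodesically complete and ${\rm exp}_p^{-1}$ fails to be surjective there; the correspondence between directions and geodesics, which Carath\'eodory's theorem needs, breaks down.
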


\begin{corollary}
The \weil completion $\Tbar$ of a \teich space $\T$ is FR.
\end{corollary}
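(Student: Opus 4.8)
The plan is to derive this Corollary directly from the preceding Theorem, exploiting that $\Tbar$ sits inside its geodesic completion $D(\Tbar,\iota)$ as a convex subset and that, in a CAT(0) setting, convexity lets one transport the FR property from the ambient space to the subspace.

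First I would observe that the ``base chamber'' $\{\,[e,y] : y\in\Tbar\,\}$ of $D(\Tbar,\iota)$ is isometric to $(\Tbar,d_{\mathrm{WP}})$ and is a $d_{\mathrm{WP}}$-convex subset of $D(\Tbar,\iota)$: the development is assembled from copies of $\Tbar$ by iterated Reshetnyak gluings along the frontier strata $\Tbar_\sigma$, each of which is \weil geodesically convex in $\Tbar$, and in each gluing step the glued pieces embed as convex subspaces of the amalgam (Reshetnyak~\cite{Re}, Bridson--Haefliger~\cite{BH}), a property preserved in the direct limit that builds $D(\Tbar,\iota)$ from \cite{Y3}. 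Thus we may regard $\Tbar\subset D(\Tbar,\iota)$ as a convex subset which is complete (being the \weil metric completion of $\T$) and whose subspace metric equals $d_{\mathrm{WP}}$. Since $D(\Tbar,\iota)$ is CAT(0) and $\Tbar$ is complete and convex, the nearest-point projection $\pi:D(\Tbar,\iota)\to\Tbar$ is well defined, $1$-Lipschitz, and restricts to the identity on $\Tbar$.

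Then, letting $\varepsilon_0>0$ and $D_0$ be the FR constants for $D(\Tbar,\iota)$ supplied by the Theorem, I would take an arbitrary $S\subset\Tbar$ with $\operatorname{diam}(S)=D>D_0$. Viewed inside $D(\Tbar,\iota)$ the set $S$ still has diameter $D$, so the Theorem gives a point $x\in D(\Tbar,\iota)$ with $S\subset B\bigl(x,(1-\varepsilon_0)D/\sqrt{2}\bigr)$. For every $s\in S$ one has $s=\pi(s)$, whence $d\bigl(s,\pi(x)\bigr)=d\bigl(\pi(s),\pi(x)\bigr)\le d(s,x)\le(1-\varepsilon_0)D/\sqrt{2}$, and $\pi(x)\in\Tbar$. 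Hence $S$ lies in the $\Tbar$-ball of radius $(1-\varepsilon_0)D/\sqrt{2}$ centred at $\pi(x)$, so $\Tbar$ is FR with the very same constants $\varepsilon_0,D_0$.

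The single substantive point, and the step I expect to require the most care in a full write-up, is the convexity of the base copy of $\Tbar$ in $D(\Tbar,\iota)$ --- that a $D(\Tbar,\iota)$-geodesic joining two points of $\Tbar$ never leaves $\Tbar$. This is precisely where the \weil geodesic convexity of the strata $\Tbar_\sigma$ established earlier, together with the Reshetnyak gluing theorem, do the work; everything after that is routine CAT(0) geometry (existence and $1$-Lipschitz property of the nearest-point projection onto a complete convex set, and the elementary inequality used above).
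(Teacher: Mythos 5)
Your argument is correct and is exactly the deduction the paper leaves implicit: $\Tbar$ sits as a complete \weil convex subset (the base chamber) of the CAT(0) development $D(\Tbar,\iota)$, and the $1$-Lipschitz nearest-point projection onto this complete convex set transports the FR ball from the ambient space into $\Tbar$ with the same constants $\varepsilon_0$, $D_0$. The paper states the corollary without proof (deferring to \cite{Y3}), and you correctly isolate the one substantive ingredient --- the convexity of the base copy of $\Tbar$ in $D(\Tbar,\iota)$ --- which the paper supplies via the Reshetnyak gluing construction and records explicitly in its discussion of $\Tbar$ as a convex subset of $D(\Tbar,\iota)$.
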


The proof of the theorem determines a lower bound of $\varepsilon_0$ to be
$1-\sqrt{k/k+1}$ with $k =6g-6$, which is larger (for $g>1$) than the {\it maximal} dimension
of the flats as described above, which was $g+(\big[\frac{g}{2}\big]-1)$.  The particular value of
$k$ here should be regarded as the maximal dimension of a flat.  Namely if
there is a flat, its dimension cannot exceed $6g-6$.    On the other hand, the existence of those flats arising from
the product structure of the frontier sets of $\Tbar$ does not necessarily imply that the space is FR, as there may be
infinite dimensional flats
elsewhere.  The definition of  FR spaces utilizes only the convexity
of the distance function to describe the finite dimensionality of possibly existent flats,
without directly dealing with the singular behavior of the \weil metric tensor near the frontier set $\partial {\Tbar}$.
The statement of the theorem~\cite{Y3} says that despite the lack of local compactness near the frontier set $\partial {\Tbar}$,
the \weil completion $\Tbar$ of the \teich space $\T$ exhibits  finite rank characteristics.

\subsection{\weil geodesic completeness}

Given a Riemannian manifold $M$ and a codimension-{\it two} submanifold $S \subset M$, the open manifold
$N:=M\backslash S$ has $M$ as its metric completion as well as the
geodesic completion with respect to the Riemannian distance
function. (Consider $M={\mathbb R}^2$, $S=\{0\}$ and $N$ the punctured
plane, for example.) This of course is expected with the Hopf-Rinow 
theorem available in the manifold setting, which in effect demonstrates
the equivalence of metric completeness and geodesic completeness.  

Here the analogous picture is given by taking $N = \T$, $S=
{\Tbar} \backslash {\T}$, and $M$ being either $\Tbar$ or $D({\Tbar},
\iota)$, depending on whether the completion is taken to be metric
or geodesic. The disparity, that $\Tbar$ is metrically complete
but not geodesically complete, is caused by the singular behavior
of the \weil metric near the strata, where the points in the
frontier set can be modelled as vertex points of cusps~\cite{W2}.

A consequence of the geodesic extension property (namely each geodesic can be extended to a geodesic line) of the development $D({\Tbar}, \iota)$ is that for each point $p \in D({\Tbar}, \iota)$,
the inverse map ${\rm exp}_p^{-1}$ of the ``exponential map" from $D({\Tbar}, \iota)$ to the
tangent cone $C_p D({\Tbar}, \iota)$, which is isometric to ${\mathbb R}^{|\sigma(p)|}$,  is  surjective,
as every geodesic segment starting at $p$ can be extended to a geodesic line so that the image by the inverse exponential map is  an entire real line through the origin of
${\mathbb R}^{|\sigma(p)|}$.  It is precisely this point that will be needed in the proof of the finite rank theorem.  Namely the
finite rank of the space $D({\Tbar}, \iota)$ is shown by using Caratheodory's theorem\index{Caratheodory's theorem} about convex hulls in
${\mathbb R}^n$, which then implies the inequality between the diameter $D$ and the circumradius $R$ of convex sets
in $D({\Tbar}, \iota)$. Without the surjectivity of  the inverse exponential map  ${\rm exp}_p^{-1}$, the
correspondence between the space of geodesics and the space of directions breaks down.  Also one notes that the
geodesic completeness
is understood in the sense that any geodesic segment can be extended to {\it a} geodesic line, and there may be
more than one extension (in fact uncountably many extensions) making ${\rm exp}_p$ multi-valued.
This is once again due to the singular behavior of the \weil metric tensor near the frontier sets, where the sectional
curvature can blow down to $-\infty$, which causes that the behavior
of geodesics resembles that of geodesics in ${\mathbb R}$-trees.

One should also mention that the development $D({\Tbar}, \iota)$ can be seen from  billiard theory.  
This point was raised in the proof of existence of pseudo-Anosov axes by Wolpert \cite{W2} where a conditional sequential compactness of \weil geodesics is established.  A convergence can be guaranteed up to  Dehn twists at the strata the geodesics hit.  Namely a billiard ball is bounced back at each stratum with equal incoming and outgoing angles  at the tangent cone level, but not in the nonlinear level once \weil exponentiated.  In Section 5 of \cite{Y3}, the author has  laid out a detailed comparison between Wolpert's statement and the situation for the development $D({\Tbar}, \iota)$: The billiard ball goes through the wall/stratum only to find on the other side not knowing which directions to go in the Fenchel-Nielsen twist directions. In short, the billiard ball trajectory is deterministic in the tangent cone at the origin of the \weil geodesic, but highly non-deterministic due to the ${\mathbb R}$-tree like structure at the strata.      

\subsection{\weil isometric action and symmetry of $D({\Tbar}, \iota)$}
It was shown (\cite{MW}) that the full isometry group of ${\Tbar}$ is the extended mapping 
class group. Recall that Royden showed that the mapping class group
is the full isometry groups with respect to the \teich distance. Thus the isometry group of $D({\Tbar}, \iota)$ contains a group which is the semi-direct product of the extended mapping class group and the Coxeter group, in which the extended mapping class group is a normal subgroup.  For a Coxeter complex, there is a natural construction of an isometry group of the
Coxeter complex which contains the original Coxeter group as a normal subgroup.
In the current context, the fundamental domain is the \weil completion $\Tbar$ of the \teich
space $\T$, on which the extended  mapping class group $\widehat{{\rm Map}_{\Sigma}}$ acts isometrically.
The extended mapping class group $\widehat{{\rm Map}_{\Sigma}}$ is known (\cite{Iv},\cite{Kor},\cite{Lu}) to be the full automorphism group of the complex of curves $C({\cal S})$.  Using this fact, it has been
shown~\cite{MW} that the extended mapping class group is indeed the full isometry group
of the \weil completed \teich space.  Note that each element
$\gamma$ of the extended mapping class group $\widehat{{\rm Map}_\Sigma}$ preserves the Coxeter matrix, namely
\[
m(\gamma(s), \gamma(t)) = m(s,t)
\]
As the Coxeter group $W$ is generated by ${\cal S}$, and the group $W$ is
completely determined by the Coxeter matrix $[m(s,t)]_{s,t \in {\cal S}}$,
it follows that  each element $\gamma$ in $\widehat{{\rm Map}_{\Sigma}}$ induces an
automorphism of $W$. Such an automorphism of $W$ is called {\it diagram automorphism}\index{diagram automorphism}
\cite{Da}.

The formalism laid out in M.Davis' book gives us a natural action (Proposition 9.1.7~\cite{Da}) of the
semi-direct product $G:=W \rtimes \widehat{{\rm Map}_\Sigma}$ on the development
$D({\Tbar}, \iota)$ as follows: given $u=(g, \gamma) \in G$ and
$[g', y] \in D({\Tbar}, \iota)$,
\[
u \cdot [g', y] := [g \gamma (g'), \gamma y]
\]
where $\gamma(g')$ is the image of $g'$ by the automorphism of $W$ induced by
$\gamma:C({\cal S}) \rightarrow C({\cal S})$.

Clearly the action $G \hookrightarrow D({\Tbar}, \iota)$ is isometric. Thus $G$ is a
subgroup of the isometry group ${\rm Isom}(D({\Tbar}, \iota))$.  It remains an open question
whether this group is indeed the full isometry group, and if not, how much larger the isometry group is.

By applying a result of Davis' book (Lemma~5.1.7~\cite{Da}) we note also the following.

\begin{theorem}
The action of the Coxeter group $W \subset G $
on $D({\Tbar}, \iota)$ is properly discontinuous.
\end{theorem}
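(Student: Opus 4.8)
Here is the strategy I would follow. The plan is to recognize the development $D(\Tbar, \iota)$ as an instance of Davis' \emph{basic construction} $\mathcal{U}(W, X)$ with fundamental chamber $X = \Tbar$, and then to check precisely the hypotheses under which Lemma 5.1.7 of \cite{Da} delivers a properly discontinuous $W$--action. The mirror structure on $\Tbar$ over $S = {\cal S}$ is the one already built into the construction: for a curve $c \in {\cal S}$ (so $|\{c\}| = 1$) the mirror $X_c$ is the totally geodesic wall $\Tbar_{\{c\}}$, and for a point $y$ the set $S(y)$ of mirrors through $y$ is exactly the simplex $\sigma(y) \in C({\cal S})$. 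The quotient of $W \times \Tbar$ by the relation $(g,y) \sim (g',y')$ iff $y = y'$ and $g^{-1}g' \in W_{\sigma(y)}$ is then literally $\mathcal{U}(W, \Tbar)$ for this mirror structure, $W$ acts by $w\cdot[g,y] = [wg,y]$ (well defined since $W_{\sigma(y)}$ depends only on $y$), the copy $[e,\Tbar]$ is a strict fundamental chamber, and the stabilizer of $[g,y]$ is the conjugate $g\,W_{\sigma(y)}\,g^{-1}$.

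First I would verify the finiteness condition that controls everything. The nerve of the mirror structure $\{\Tbar_{\{c\}} : c \in {\cal S}\}$ is by definition the complex whose simplices are the subsets $T \subseteq {\cal S}$ with $\bigcap_{c \in T}\Tbar_{\{c\}} \neq \emptyset$; since a collection of curves can be pinched simultaneously precisely when they are pairwise disjoint, this nerve is exactly the complex of curves $C({\cal S})$ of the ``Properties of $\Tbar$'' list. For any simplex $\tau = \{c_1,\dots,c_k\} \in C({\cal S})$ the curves $c_i$ are pairwise disjoint, so clause (ii) of the definition of the Coxeter matrix gives $m(s_{c_i}, s_{c_j}) = 2$ for $i \neq j$; hence the special subgroup $W_\tau = \langle s_{c_1},\dots,s_{c_k}\rangle \cong (\mathbb{Z}/2)^{k}$ is finite, with $k = |\tau| \leq 3g-3$. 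In particular $W_{\sigma(y)}$ is finite for every $y \in \Tbar$, and therefore the point stabilizer $g\,W_{\sigma(y)}\,g^{-1}$ in $D(\Tbar,\iota)$ is a finite group of order at most $2^{3g-3}$.

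Next I would upgrade finiteness of stabilizers to proper discontinuity, which is the content Davis' Lemma 5.1.7 packages. The ingredient is the \emph{local chamber picture}: near a point $[g,y]$ with $y$ in the stratum $\T_{\sigma(y)}$, the space $D(\Tbar,\iota)$ is a union of exactly $2^{|\sigma(y)|}$ chambers, glued along the walls $\Tbar_{\{c\}}$ indexed by the curves $c \in \sigma(y)$ and arranged around $y$ like the $2^{|\sigma(y)|}$ orthants of $\mathbb{R}^{|\sigma(y)|}$ crossed with $T_y \T_{\sigma(y)}$ --- this is exactly the Alexandrov tangent cone description of Wolpert \cite{W3} that entered the construction. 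Thus, although the chambers $w\Tbar$ are not compact, the chamber complex is locally finite: every point has a neighborhood $U$ meeting only the finitely many chambers $g'\Tbar$ with $g^{-1}g' \in W_{\sigma(y)}$, so $\{w \in W : wU \cap U \neq \emptyset\} \subseteq g\,W_{\sigma(y)}\,g^{-1}$ is finite. Covering an arbitrary compact $K \subset D(\Tbar,\iota)$ by finitely many such neighborhoods then shows that $\{w \in W : wK \cap K \neq \emptyset\}$ is finite, which is proper discontinuity.

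The step I expect to be the genuine obstacle is making the third paragraph rigorous, because $D(\Tbar,\iota)$ is \emph{not} locally compact --- the walls $\Tbar_\sigma$ carry the uncountable-degree $\mathbb{R}$--tree transversal structure --- so one cannot simply cite the locally-finite-complex version of the basic-construction arguments. The point to isolate is that the non-local-compactness happens entirely \emph{inside} a single chamber $\Tbar$ and is irrelevant to chamber-counting: the number of chambers accumulating at a point is governed only by the combinatorics of the mirror structure at that point, i.e.\ by the simplex $\sigma(y)$, and that simplex is finite because $|\sigma(y)| \leq 3g-3$. Once this local chamber count is pinned down, the finiteness of $W_{\sigma(y)}$ established above is precisely the hypothesis of Lemma 5.1.7 of \cite{Da}, and the theorem follows.
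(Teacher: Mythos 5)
Your proposal takes essentially the same route as the paper, whose entire argument is to invoke Lemma~5.1.7 of Davis' book \cite{Da} (deferring details to \cite{Y7}): you correctly recognize $D(\Tbar,\iota)$ as the basic construction $\mathcal{U}(W,\Tbar)$ for the mirror structure given by the strata, and you verify the lemma's hypothesis that each special subgroup $W_{\sigma(y)}$ is finite --- indeed $(\mathbb{Z}/2)^{|\sigma(y)|}$ with $|\sigma(y)|\leq 3g-3$, since disjoint curves give commuting reflections via $m(s,s')=2$. The finiteness check and the local chamber count you spell out are exactly the content the paper leaves to the cited references, so there is no substantive divergence.
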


For a proof, see \cite{Y7}

\subsection{Embeddings of the Coxeter complex into ${\cal UT}$}

A loosely formulated  guiding philosophy in studying  \teich spaces
is that the geometry of the space is somehow inherited from the geometry of the 
Riemann surfaces it is parameterizing.  In this section, we proceed
along this line of thinking by considering the space of embeddings of the Coxeter
complex into the universal \teich space.

We first note that each simple geodesic in the closed hyperbolic surface $\Sigma$ can 
act as a mirror introducing a reflective ${\bf Z}_2$ symmetry to a doubled cover.  The symmetry is defined by first providing another copy of the surface, then cutting across the simple closed geodesic $c$,
in both the original surface and the new copy, and lastly identifying the four ends by pairs
so that  for each simple closed geodesic $c'$ transverse to $c$, the union $c' \cup c'$
of the original $c'$ and the new copy $c'$  
is either a simple closed geodesic in the new surface $\Sigma \cup \Sigma$, or a pair of simple
closed geodesics. We denote the resulting surface $\Sigma \sqcup_c \Sigma$. 

The distinction here is caused by the nature of the simple closed geodesic $c$.  If $c$ is non-separating,
namely the punctured surface $\Sigma \backslash \{c\}$ consist of a path-connected component, 
then $\Sigma \sqcup_c \Sigma$ is path-connected, and $c' \cup c'$ is a single simple closed geodesic. 
When $c$ is separating, $\Sigma \sqcup_c \Sigma$ consists of two copies of $\Sigma$.   
In the former case, the genus of the surface $\Sigma \sqcup_c \Sigma$ is $2g-1$, in the latter case $2g$. 

In the case of a separating geodesic $c$, we can embed the surface 
$\Sigma \sqcup_c \Sigma \cong \Sigma \coprod \Sigma$ into
a surface of genus $2g-1$ by cutting each of $\Sigma$'s at a simple closed 
non-separating geodesic $c''$ disjoint from $c$ 
and pasting the two surfaces along (the four copies of) $c''$.  We denote the resulting surface by $\Sigma \sqcup_c^{c''} \Sigma$.
Geometrically it is a surface of genus $2g-1$ with a ${\bf Z}_2$-symmetry across the pair of simple closed
geodesics corresponding to $c''$.  

Recall the construction of the Coxeter-\teich complex $D({\Tbar}, \iota)$, a quotient space 
$W \times {\Tbar}/\sim$ whose points are written as $[g, y]$. We identify a point $[g, y]$ with
a point in \teich space of higher genus surface as follows.  One characteristic of the 
Coxter compex is that each element $g$ in $W$ is written as a product of generators $\Pi_{i=1}^N s_i$
of the Coxeter group $W$.  For  $s_1= s_{c_1}$ in the product, we extend the surface $\Sigma$ by introducing an 
unramified double cover $\Sigma \sqcup_{c_1} \Sigma$ which has a symmetry across two copies of $c_1$.  Note the 
unramified cover $\Sigma \sqcup_{c_1} \Sigma$ needs  to be decorated by $c_1''$ in case $c_1$ is a separating geodesic, which we
have suppressed for now.  For $s_2 = s_{c_2}$ in the product, we next  extend the surface $\Sigma \sqcup_{c_1} \Sigma$ 
by introducing an unramified double cover $(\Sigma \sqcup_{c_1} \Sigma) \sqcup_{c_2} (\Sigma \sqcup_{c_1} \Sigma)$ 
which has a symmetry across four copies of $c_2$.  Inductively we can define a tower of double covers over the original
surface $\Sigma$, its largest cover has genus $\phi \circ \cdots \circ \phi(g)$ ($\phi$ composed $N$ times) 
where $\phi(g) = 2g-1$. We denote the resulting surface $\sqcup_w \Sigma$. The fact that the correspondence $w \mapsto \sqcup_w \Sigma$ is well-defined modulo the choices of $c_i''$'s follows from the properties of the Coxeter group \cite{Bo}. 

The \weil metric needs to be normalized by the volume $|\Sigma_g|$, so that the \weil metric on $\Sigma_g$ and \weil metric
$\Sigma_{\phi(g)}$ with a ${\bf Z}_2$ symmetry are compatible. Namely
\[
\langle h_1, h_2 \rangle_{{\rm WP}} = \frac{1}{\chi(\Sigma_g)}\int_{\Sigma_g} \langle h_1(x), h_2(x) 
\rangle_{G(x)} \,\,\, d \mu_G(x). 
\]
 This 
follows from the equalities
\[
|\Sigma(\phi(g))|= \chi(\phi(g)) = 2\phi(g)-2 = 2(2g-1)-2 = 2(2g-2) = 2 \chi(g) = 2 |\chi(g)|.
\]
This simply says that by the doubling procedure via a reflection across
a simple closed geodesic, the volume is doubled, 
which can be normalized by the topological invariant $\chi$ to have a 
well-defined \weil metric.

This construction of higher genus Riemann surfaces 
$\{ \sqcup_w \Sigma \}_{w \in W}$ provides a way of embedding the Coxeter-\teich 
complex $D({\Tbar}, \iota)$ \weil isometrically into the universal \teich 
space ${\cal UT}$\index{\weil isometric embedding the Coxeter-\teich 
complex  into the universal \teich 
space}.  We need to remind 
ourselves that the embedding is not unique for two reasons. First there 
are no canonical 
embeddings of the \teich spaces ${\T}_g$ in ${\cal UT}$ for $g \geq 1$. 
Secondly, we recall that when $g \in W$ is generated by $s_c$ with $c$ a 
separating
simple closed geodesic, we need to enlist an extra parameter $c''$ to obtain 
a path-connected double cover.  

Understanding the space of embeddings is far from complete, and we hope to 
make things better organized in the near future.

\section{\teich Space as a \weil Covex Body\index{\weil convex body}}
\subsection{$\Tbar$ as a convex subset in $D(\Tbar, \iota)$}

The Coxeter complex setting allows 
to view the \teich space as a \weil  convex set in an ambient space $D(\Tbar, \iota)$, bounded by a set of
 complex-codimension one ``supporting hyperplanes"  $\{D(\Tbar_\sigma, \iota) \,\, | \,\, |\sigma|=1\}$ of 
the frontier stratum $\{\Tbar_\sigma\}$ with each $\sigma$ 
representing a {\it single}  node. Every boundary point is contained in at least one of the set of the supporting hyperplanes $\{D(\Tbar_\sigma, \iota) \,\, | \,\, |\sigma|=1 \}$.  In this picture, each  $D(\Tbar_\sigma, \iota)$ is a totally geodesic
set, metrically and geodesically complete, and when $D(\Tbar_{\sigma_1}, \iota)$ and  $D(\Tbar_{\sigma_2}, 
\iota)$ intersect along  $D(\Tbar_{\sigma_1 \cup \sigma_2}, \iota)$, they meet at a right angle.  

One can also look at  the translates of $\{D(\Tbar_\sigma, \iota)\,\, | \,\, |\sigma|=1\}$
by the action of the Coxeter group $W$. They form a right-angled grid structure in $D(\Tbar, \iota)$,
whose lattice points are the orbit image by the Coxeter group $W$ of the set $\{\Tbar_\theta \,\, | \,\, |\theta|=3g-3 \}$ with $\theta$ indexing the maximal set of nodes on the surface.

Under this setting, for each $\sigma$ with $|\sigma| =1$, consider a {\it half-space}, namely the set $H_{\sigma}$, containing $\Tbar$ in the
$D(\Tbar, \iota)$, and bounded by $D(\Tbar_\sigma, \iota)$. 
We note here the fact obtained by Wolpert~\cite{W3} that the \weil metric completion $\Tbar$ is the closure of the convex hull of the
vertex set  $\{\Tbar_\theta \,\, | \,\, |\theta|=3g-3 \}$, which suggests an interpretation of the \teich space 
as a simplex.  

We can summarize the  above discussion as 
\[
\Tbar = \cap_{\sigma \in {\cal S}} H_{\sigma} \,\,\,  \mbox{ with } \,\,\, 
\partial \Tbar \subset \cup_\sigma D(\Tbar_\sigma, \iota).
\]
where every boundary point $b \in \partial \Tbar $ belongs to $D(\Tbar_\sigma, \iota)$ for some $\sigma$ in $\cal S$.  

\subsection{Euclidean convex geometry and Funk metric\index{Funk metric}\index{metric!Funk}}
Suppose that $\Omega$ is an open convex subset in a Euclidean space ${\mathbb R}^d$. In what follows, we set the presentation by Papadopoulos and Troyanov~\cite{PT1} as our reference for Funk 
and Hilbert metrics\index{Hilbert metric}\index{metric!Hilbert}.

First we represent the convex set $\Omega$ as 
\[
\Omega = \cap_{\pi(b) \in {\cal P}} H_{\pi (b)}
\]
where $H_{\pi(b)}$ is the half-space bounded by a supporting
hyperplane $\pi(b)$ of $\Omega$ at the boundary point $b$, containing the convex set $\Omega$.
The index set $\cal P$ is the set of all supporting hyperplanes of $\Omega$. 
That for every boundary point $p$ there exists a supporting hyperplane $\pi(b)$ follows from the 
convexity of $\Omega$.  In general, there can be more than one supporting hyperplane
of $\Omega$ at $p \in \del \Omega$. 
The index set $\cal P$ is identified with the set of unit normal vectors to the supporting hyperplanes.
It is identified with a subset of $S^{d-1}$, is equal to the entire sphere when the convex set is bounded.
We denote by ${\cal P}(b)$ the set of supporting hyperplanes at $b \in \del \Omega$.

\begin{definition}
For a pair of points $x$ and $y$ in $\Omega$, the Funk asymmetric metric~\cite{F} is defined by 
\[
F(x, y) =  \log \frac{d(x, b(x,y))}{d(y,  b(x,y))}. 
\]  
where the point $b(x,y)$ is the intersection of the boundary $\del \Omega$ and the ray $\{x+t \xi_{xy}\,\, | \,\, t>0\}$ 
from $x$ though $y$. Here $\xi_{xy}$ is the unit vector along the ray.
\end{definition}
\noindent{\bf Remark.} In this section only, we use the term {\it metric} on a set  $X$ for 
a function $\delta: X \times X \rightarrow ({\mathbb R}_+ \cup \{ \infty \} $ satisfying:
\begin{enumerate}
\item $\delta(x,x) = 0$ for all $x$ in $X$,
\item $\delta(x, z) \leq \delta(x, y) + \delta(y, z)$ for all $x,y$ and $z$ in $X$.
\end{enumerate} 

Now let $\pi_0$ be a supporting hyperplane at $b(x,y)$, namely $\pi_0 \in {\cal P}(b(x,y))$.
Then  note the similarity of the triangle $\triangle (x, \Pi_{\pi_0}(x), b(x,y))$
and $\triangle (y, \Pi_{\pi_0}(y), b(x,y))$, where $\Pi_{\pi_0}(p)$ is the foot of the point $p$ on the hyperplane $\pi_0$, or put it differently $\Pi_{\pi_0} : {\bf R}^d \rightarrow \pi_0$ is the nearest point projection map. This says that 
\[
\log \frac{d(x, b(x,y))}{d(y,  b(x,y))} = \log \frac{d(x, \pi_0)}{d(y,  \pi_0)}.
\]
Also by the similarity 
argument of triangles, note that the right hand side of the equality is independent of the 
choice of $\pi_0$ in ${\cal P}(b(x, y))$.

Using the convexity of $\Omega$, the quantity $F(x, y)$ can be characterized variationally as follows. 
Define $T(x,\xi, \pi)$ by $\pi \cap \{x+t \xi  | t > 0\}$ with $\pi \in {\cal P}$ where $\xi$ is a unit vector. 
Consider the case $\xi = \xi_{xy}$ where $\xi_{xy}$ is the unit tangent vector at $x$ to the ray from $x$ through $y$.  When the hyperplane supports $\Omega$ at $p$, we have $T(x,\xi_{xy}, \pi) = b(x, y)$.
and otherwise the point $T(x, \xi_{xy}, \pi)$ lies outside $\Omega$.  
When $\pi \notin {\cal P}(b(x,y))$,  by the similarity argument between the triangles
$\triangle (x, F_{\pi}(x), T(x, \xi_{xy}, \pi))$ and $\triangle (y, F_{\pi}(y), T(\xi_{xy}, \pi))$ again we have 
\[
\frac{d(x, \pi)}{d(y, \pi)} =\frac{d(x, T(x,\xi_{xy},b))}{d(y, T(x,\xi_{xy}, b))}.
\]
Now the point $b(x,y) = T(x,\xi_{xy}, \pi)$ is actually the closest point to $x$ along the ray $\{x+t \xi_{xy} : t>0 \}$.
This in turn says that  $\pi$ which supports $\Omega$ at $b(x,y)$ maximizes the quantity $\frac{d(x, T(x,\xi_{xy}, 
\pi))}{d(y, T(x,\xi_{xy}, \pi))}$ among all elements of ${\cal P}$;  

\[
 \log \frac{d(x, \pi(b(x,y)))}{d(y,  \pi(b(x, y)))} = \sup_{\pi \in {\cal P}} \log \frac{d(x, \pi)}{d(y,  \pi)}. 
\]

Hence we have a new characterization of the Funk metric \cite{Y4};  
\begin{theorem} The Funk metric defined as above over a convex subset $\Omega \subset {\mathbb R}^d$ has the following variational formulation\index{variational formulation of Funk metric}\index{Funk metric!variational formulation of}:
\[
F(x, y) = \sup_{\pi \in {\cal P}} \log \frac{d(x, \pi)}{d(y,  \pi)}.
\]    
\end{theorem}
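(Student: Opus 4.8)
The plan is to show the two expressions for $F(x,y)$ agree by a geometric similar-triangles argument, essentially collecting the observations that are already assembled in the paragraphs preceding the statement. First I would fix $x,y \in \Omega$, let $\xi_{xy}$ be the unit vector along the ray from $x$ through $y$, and let $b = b(x,y)$ be the point where this ray exits $\Omega$. By convexity of $\Omega$ there is at least one supporting hyperplane $\pi_0 \in {\cal P}(b)$. The first step is to verify the identity
\[
\log \frac{d(x, b)}{d(y, b)} = \log \frac{d(x,\pi_0)}{d(y,\pi_0)},
\]
which follows from similarity of the right triangles $\triangle(x,\Pi_{\pi_0}(x),b)$ and $\triangle(y,\Pi_{\pi_0}(y),b)$ — the two triangles share the angle at $b$ and have a right angle at the foot of the perpendicular, so the ratios of corresponding sides coincide. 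This also shows the right-hand side is independent of which $\pi_0 \in {\cal P}(b)$ one picks, so the definition of $F(x,y)$ is unambiguous. Hence $F(x,y) = \log \frac{d(x,\pi_0)}{d(y,\pi_0)}$ for any supporting hyperplane at $b$.

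The second step is the inequality $F(x,y) \le \sup_{\pi \in {\cal P}} \log \frac{d(x,\pi)}{d(y,\pi)}$, which is immediate since $\pi_0 \in {\cal P}$ is one competitor in the supremum. The third step is the reverse inequality: I would show that for \emph{every} $\pi \in {\cal P}$,
\[
\log \frac{d(x,\pi)}{d(y,\pi)} \le \log \frac{d(x,b)}{d(y,b)}.
\]
For this, let $T(x,\xi_{xy},\pi)$ be the intersection of $\pi$ with the ray $\{x + t\xi_{xy} : t > 0\}$ (when it exists; if the ray is parallel to or points away from $\pi$, the ratio $d(x,\pi)/d(y,\pi)$ can be handled separately, and in fact that case gives a ratio no larger than the one at $b$). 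Again by similar triangles, this time $\triangle(x,\Pi_\pi(x),T(x,\xi_{xy},\pi))$ and $\triangle(y,\Pi_\pi(y),T(x,\xi_{xy},\pi))$, one gets
\[
\frac{d(x,\pi)}{d(y,\pi)} = \frac{d(x, T(x,\xi_{xy},\pi))}{d(y, T(x,\xi_{xy},\pi))}.
\]
Now the key convexity input: since $\pi$ supports $\Omega$ and $b \in \del\Omega$ lies on the ray, the point $b = T(x,\xi_{xy},\pi_0)$ is the \emph{closest} exit point among all supporting hyperplanes, i.e. $d(x,b) \le d(x,T(x,\xi_{xy},\pi))$ for all $\pi \in {\cal P}$, with the difference $d(x,T)-d(x,b) = d(y,T)-d(y,b)$ being the same positive quantity in numerator and denominator. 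An elementary monotonicity observation — for $a > c > 0$ and $s \ge 0$, the function $s \mapsto (a+s)/(c+s)$ is decreasing — then yields $\frac{d(x,T)}{d(y,T)} \le \frac{d(x,b)}{d(y,b)}$, and taking logs and then the supremum over $\pi$ completes the argument.

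I expect the main obstacle to be the bookkeeping in the third step: one must treat carefully the supporting hyperplanes $\pi$ for which the ray from $x$ in direction $\xi_{xy}$ either does not meet $\pi$ on the positive side or meets it "behind" $x$, since then $T(x,\xi_{xy},\pi)$ is not a genuine competitor in the same form and one cannot literally write the similar-triangles ratio. The cleanest fix is to observe that for such $\pi$ the hyperplane separates $x$ (and $y$) from $b$ in a way that only makes $d(x,\pi)/d(y,\pi)$ smaller than the value realized at $b$, or alternatively to restrict the supremum at the outset to hyperplanes crossed by the forward ray and note this does not change the value of the supremum. Everything else is routine plane geometry and the monotonicity of $(a+s)/(c+s)$; no deep input beyond convexity of $\Omega$ is needed.
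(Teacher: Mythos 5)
Your proposal is correct and follows essentially the same route as the paper: the similar-triangles identity converting the ratio $d(\cdot,\pi)$ into a ratio of distances along the ray, followed by the observation that the exit point $b(x,y)$ minimizes $d(x,T(x,\xi_{xy},\pi))$ and hence (by the monotonicity of $s \mapsto (a+s)/(c+s)$) maximizes the ratio. Your explicit treatment of the monotonicity step and of the hyperplanes not met by the forward ray is a welcome tightening of details the paper leaves implicit, but it is not a different argument.
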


We note that though variational formulations of {\it Hilbert} metric has been known (for example \cite{Li} for polygons), there had not been none available for the Funk metric previous to \cite{Y4}.  

\subsection{\weil Funk metric}
We now transcribe the Euclidean Funk geometry as well as its compatible Finsler structure in the previous section to the \weil setting. 

First note that as each $\Tbar_\sigma$ lies in $\Tbar$ as a complete convex set, for each point $x \in \Tbar$, there 
exists the nearest point projection $\Pi_\sigma (x) \in D(\Tbar_\sigma, \iota)$, and the \weil geodesic $\overline{x 
\pi_\sigma(x)}$ meets  $D(\Tbar_\sigma, \iota)$ perpendicularly, its length uniquely realizing the distance
$\inf_{y \in \Tbar_\sigma} d(x, y) = d(x, \Pi_\sigma (x))$.  We denote this number by $d(x, \Tbar_\sigma)$. 
We also introduce the notation $\nu_\sigma(x)$   for the unit vector
at $x$ along the \weil geodesic  between $x$ and $\Pi_\sigma(x)$.  In particular $- \nu_\sigma(x)$ is the \weil 
gradient vector of the function $d(x, \Tbar_\sigma)$.  

\begin{definition}
We define the Weil-Petersson-Funk metric\index{Weil-Petersson-Funk metric}\index{metric!Weil-Petersson-Funk} $F$ on $\T$ as
\[
F(x, y) = \sup_{\sigma \in {\cal S}} \log \frac{d(x, \Tbar_\sigma)}{d(y, \Tbar_\sigma)}. 
\]
\end{definition}

In order to make the analogy with the Euclidean setting more obvious, and in order to make clearer the viewpoint that \teich space is a convex body within an ambient space, we can instead define the metric,  as
\[
F(x, y) = \sup_{\sigma \in {\cal S}} \log \frac{d(x, D(\Tbar_\sigma, \iota))}{d(y, D(\Tbar_\sigma, \iota))}
\]
We are allowed to replace $\Tbar_\sigma$ by $ 
D(\Tbar_\sigma, \iota)$ in the above definition since for any $z \in \T$, we know that $\Pi_{\sigma}(z)$ is in $\T_{\sigma} \subset D(\iota, \Tbar_{\sigma})$
due to the fact that the frontier sets intersect perpendicularly. \\

The equality follows from the discussion in the paragraph preceding the definition of \weil Funk metric.

Note that the triangle inequality for the \weil Funk metric follows from the following:
\begin{eqnarray*}
F(x, y) + F(y, z) & = & \sup_{\sigma \in {\cal S}} \log \frac{d(x, D(\Tbar_\sigma, \iota))}{d(y, D(\Tbar_\sigma, \iota))} +  \sup_{\sigma \in {\cal S}} \log \frac{d(y, D(\Tbar_\sigma, \iota))}{d(z, D(\Tbar_\sigma, \iota))} \\
 & \geq  & \sup_{\sigma \in {\cal S}} \Big( \log \frac{d(x, D(\Tbar_\sigma, \iota))}{d(y, D(\Tbar_\sigma, \iota))} +   \log \frac{d(y, D(\Tbar_\sigma, \iota))}{d(z, D(\Tbar_\sigma, \iota))} \Big) \\
   & = &  \sup_{\sigma \in {\cal S}} \log \frac{d(x, D(\Tbar_\sigma, \iota))}{d(z, D(\Tbar_\sigma, \iota))} = F(x, z)
\end{eqnarray*}

\subsection{The \teich metric, Thurston's Asymmetric Metric and the Weil-Petersson-Funk metric}
In this section, we make a comparison among three Funk type metrics defined on  \teich spaces.
The first is the \teich metric\index{variational formulation of \teich metric}\index{metric!variational formulation of \teich}, which is defined as:
\begin{definition} 
Let $[G_1]$ and $[G_2]$ be two conformal structures (uniformized by hyperbolic metrics $G_i$) on $\Sigma$.  The \teich distance 
between $[G_1]$ and $[G_2]$ is given by 
\[
d_T ([G_1], [G_2]) = \frac12 \inf_f \log K(f)
\]
where the infimum is taken over all quasi-conformal homeomorphisms $f : (\Sigma, [G_1]) \rightarrow (\Sigma, [G_2])$ that are isotopic to the identity.  
\end{definition}

Kerckhoff \cite{Ke2} showed that the \teich distance can be 
alternatively defined as
\[
d_T([G_1], [G_2]) = \frac12 \sup_{\sigma \in {\cal S}} \log \frac{{\rm Ext}_{[G_1]}(\sigma)}{{\rm Ext}_{[G_2]}(\sigma)}
\]
where ${\rm Ext}_{[G]}(\sigma)$ is the extremal length of the homotopy class of simple closed curves in $\Sigma$.  Recall \cite{PT2}  that the extremal length of $\sigma$ is defined as $1/{\rm Mod}_{\Sigma}(\sigma)$ where  ${\rm Mod}_{\Sigma}(\sigma)$ is the supremum of the moduli of the topological cylinders embedded in $\Sigma$ with core curve in the class $\sigma$. 

Secondly, Thurston's asymmetric metric\index{Thurston's asymmetric metric}\index{metric!Thurston's asymmetric} is defined as 
\begin{definition}
Let $G_1$ and $G_2$ be the two hyperbolic metrics on $\Sigma$.  A distance function can be 
defined as
\[
T(G_1, G_2) = \sup_{\sigma \in {\cal S}} \log \frac{{\ell}_\sigma(G_1)}{{\ell}_\sigma(G_2)}
\]
called Thurston's asymmetric metric.
\end{definition}
This quantity was shown by Thurston to be equal to the following number 
\[
L(G_1, G_2) = \inf_{\phi \sim {\rm Id}_{\Sigma}} {\rm Lip}(\phi)
\]
where the infimum is taken over all diffeomorphisms $\phi$ in the isotopy class of the identity, and $ {\rm Lip}(\phi)$ is the Lipschitz constant of the map $\phi$;
\[
{\rm Lip}(\phi) = \sup_{x \neq y \in \Sigma} \frac{d_{G_2}(\phi(x), \phi(y))}{d_{G_1}(x, y)}. 
\]
For this reason, the quantity $T(G_1, G_2)$ is sometimes called Thurston's Lipschitz metric.  
Thurston in his paper  (\cite{Th1} Chapter 4) emphasizes the underlying convex geometry for the 
metric.  In particular, the space of projective measured laminations is embedded into the cotangent space $T_{G}^* \T$ for a fixed point 
$G$ in $\T$ as the boundary set of a convex body, where the embedding is given by the differential of logarithm of  
geodesic length function of geodesic laminations 
\[
d \log {\rm length} : PL(\Sigma) \rightarrow T_{G}^* \T.
\]
where the map only registers the projective classes of geodesic laminations for one is taking  the 
logarithmic derivative of the length.

Now recall the new Funk-type metrics  we have introduced above;
\begin{definition}
The Weil-Petersson-Funk metric $F$ on $\T$ is defined as
\[
F(x, y) = \sup_{\sigma \in {\cal S}} \log \frac{d(x, \Tbar_\sigma)}{d(y, \Tbar_\sigma)}. 
\]
\end{definition}

Now the analogy among  the \teich metric, the Thurston metric and the Weil-Petersson-Funk metric is clear; namely for each of them we have 
an embedding  
\[
\Theta : \T \rightarrow {\mathbb R}^{\cal S} 
\]
where the target space has a weak metric $d(x, y) = \sup_\sigma \log \frac{x_\sigma}{y_\sigma}$ for 
$x = (x_\sigma)_{\sigma \in {\cal S}}$, and 
each of the three Finsler metrics is the pulled-back metric $\Theta^* d$ defined on $\T \times \T$. 

The author would like to thank H. Miyachi for pointing out this comparison among the three metric structures.


\end{document}